\newtheorem{thm}{Theorem}[section]
\newtheorem{prop}[thm]{Proposition}
\newtheorem{lem}[thm]{Lemma}
\newtheorem{cor}[thm]{Corollary}
\newtheorem{con}[thm]{Conjecture}
\theoremstyle{definition}
\newtheorem{defn}[thm]{Definition}
\theoremstyle{remark}
\newtheorem{remark}[thm]{Remark}
\numberwithin{equation}{section}
\newcommand{\R}{\mathbb{R}}  % The real numbers.
\newcommand{\N}{\mathbb{N}}
\begin{document}

%%
%% The title of the paper goes here.  Edit to your title.
%%

\title{Percival's Conjecture for the Bunimovich Mushroom Billiard}

%%
%% Now edit the following to give your name and address:
%% 

\author{Sean P Gomes}
%\address{Department of Mathematics, University of South Carolina, 
%Columbia, SC 29208}
\email{sean.p.gomes@gmail.com}
%\urladdr{www.math.sc.edu/$\sim$howard} % Delete if not wanted.

%%
%% If there is another author uncomment and edit the following.
%%

%\author{Second Author}
%\address{Department of Mathematics, University of South Carolina,
%Columbia, SC 29208}
%\email{second@math.sc.edu}
%\urladdr{www.math.sc.edu/$\sim$second}

%%
%% If there are three of more authors they are added in the obvious
%% way. 
%%

%%%
%%% The following is for the abstract.  The abstract is optional and
%%% if not used just delete, or comment out, the following.
%%%

\begin{abstract}
The Laplace-Beltrami eigenfunctions on a compact Riemannian manifold $M$ whose geodesic billiard flow has mixed character have been conjectured by Percival to split into two complementary families, with all semiclassical mass supported in the completely integrable and ergodic regions of phase space respectively.
In this paper, we consider the Dirichlet Laplacian on a family of mushroom billiards $M_t$ parametrised by the length $t\in(0,2]$ of their rectangular part. We prove that $M_t$ satisfies Percival's conjecture for almost all $t\in(0,2]$, hence providing the first example of a billiard known to satisfy Percival's conjecture.
\end{abstract}

%%
%%  LaTeX will not make the title for the paper unless told to do so.
%%  This is done by uncommenting the following.
%%

\maketitle

%%
%% LaTeX can automatically make a table of contents.  This is done by
%% uncommenting the following:
%%

\tableofcontents

\newpage

%%
%%  To enter text is easy.  Just type it.  A blank line starts a new
%%  paragraph. 
%%

%%
%% A new section is started as follows:
%%

%%%%%%%%%%%%%%%%%%%%%%%%%%%%%%%%%%%%%%%%%%%%%%%%%%%%%%%%%%%%%%%%%%%%%%
\section{Introduction}
%%%%%%%%%%%%%%%%%%%%%%%%%%%%%%%%%%%%%%%%%%%%%%%%%%%%%%%%%%%%%%%%%%%%%%

%%
%% A subsection is started by
%%

The Bohr correspondence principle informally asserts that the evolution of a quantum mechanical system coincides with the evolution predicted by classical mechanics in the large scale limit.

One of the settings in which aspects of this correspondence can be made rigorous is that of dynamical billiards, which we shall now outline.

If $(M,g)$ is a compact boundaryless Riemannian manifold, we define \emph{dynamical billiards} on $M$ to be the Hamiltonian flow $\phi_t$ on the cotangent  bundle $T^*M$ of the manifold given by Hamilton's equations
\begin{equation}
\label{hameq}
\dot{x}_j=\frac{\partial H}{\partial \xi_j},\quad \dot{\xi}_j=\frac{\partial H}{\partial x_j}
\end{equation}
for the Hamiltonian $H(x,\xi):=|(x,\xi)|_{g^{-1}}^2$ where $g^{-1}$ is the dual metric tensor.

Since the Hamiltonian is a constant of motion for the flow $\phi_t$, it is natural to restrict the domain of this flow to the cosphere bundle \begin{equation} {S^*M:=\{z=(x,\xi)\in T^*M:|z|_{g^{-1}}=1\}}.\end{equation}

More generally, one can define billiards on compact Riemannian manifolds with piecewise smooth boundary in the sense of Chapter 6 of \cite{cfs}, see also \cite{zelditch-zworski}.

To be precise, we assume that we can smoothly embed $M$ in a boundaryless manifold $\tilde{M}$ of the same dimension and that there exist finitely many smooth functions $f_j\in \mathcal{C}^\infty(\tilde{M})$ such that the following conditions are satisfied.
\begin{enumerate}
\item $df_i|_{f_i^{-1}(0)}\neq 0$,\\
\item $df_i, df_j$ are linearly independent on $f_i^{-1}(0)\cap f_j^{-1}(0)$,\\
\item $M=\{x\in \tilde{M}:f_j(x)\geq 0 \textrm{ for all }j\}.$
\end{enumerate}

We can then write $$\partial M=\cup_j \partial M_j:=\cup_j (f_j^{-1}(0)\cap M)$$ and denote by $\mathcal{S}\subset \partial M$ the set of points that lie in $\partial M_j$ for multiple $j$.\\

We define the broken Hamiltonian flow $\phi_t$ on $S^*M$ locally by extending the boundaryless Hamiltonian flow by reflection at non-tangential and non-singular boundary collisions.

That is, if $\phi_{t_0}(z)=(x,\xi_+)$ with $x\in\partial M\setminus \mathcal{S}$ and $\langle\xi_+, N_x\rangle >0$ where $N_x$ is the outgoing unit normal covector, we extend $\phi_t$ to sufficiently small $t>t_0$ by defining $\phi_t(z)=\phi_{t-t_0}(x,\xi_-)$, where $\xi_-\in S^*_x M$ is the unique covector such that $\xi_+ + \xi_- \in T^*\partial M$ and $\pi(\xi_+)=\pi(\xi_-)$ where $\pi: T^*_{\partial M}M\rightarrow T^*\partial M$ is the canonical projection. We terminate all trajectories that meet $\partial M$ in any other manner.\\
\newpage

There are four subsets $\{\mathcal{B}_j\}_{j=1}^4$ of phase space for this class of manifolds which present an obstruction to obtaining a globally defined broken Hamiltonian flow or to the application of tools from microlocal analysis. We enumerate these sets below.
\begin{enumerate}
\item $\mathcal{B}_1=\{z\in S^*M: \phi_t(z)\in\mathcal{S}\}$
\item $\mathcal{B}_2=\{z\in S^*M: \phi_t(z) \in \partial M \textrm{ for infinitely many }t\textrm{ in a bounded interval}\}$
\item $\mathcal{B}_3=\{z\in S^*M: \phi_t(z)\notin \partial M \textrm{ for any }t>0 \textrm{ or }\phi_t(z)\notin \partial M \textrm{ for any }t<0\}$
\item $\mathcal{B}_4=\{z\in S^*M: \phi_t(z)\textrm{ meets }\partial M \textrm{ tangentially for some }t\in \mathbb{R}.\}$
\end{enumerate}
Removing these sets from our flow domain, we then obtain a globally defined billiard flow on $\mathcal{D}=S^*M\setminus (\cup_{j=1}^4\mathcal{B}_j)$. For manifolds without boundary, we simply take $\mathcal{D}=S^*M$.\\

The canonical symplectic form $d\xi\wedge dx$ on $T^*M$ determines a family of measures $\mu_c$ on each of the energy hypersurfaces
\begin{equation}
\Sigma_c=\{z=(x,\xi)\in T^*M:|z|_{g^{-1}}=c\}
\end{equation}
defined implicitly by
\begin{equation}
\int_a^b\int_{\Sigma_c} f\, d\mu_c \, dc = \int_{|(x,\xi)|_{g^{-1}}\in [a,b]}f\, |d\xi\wedge dx|
\end{equation}
for $f\in \mathcal{C}_c^\infty(T^* M)$.

Upon normalisation of $\mu_1$ we then obtain the \emph{Liouville measure} $\mu_L$ on $S^*M$, which allows us to study the ergodic properties of the billiard flow $\phi_t$.
\begin{remark}
It is shown in Section 6.2 of \cite{cfs} that the sets $\mathcal{B}_1,\mathcal{B}_2$ are of Liouville measure zero, and it is shown in \cite{zelditch-zworski} that the set $\mathcal{B}_4$ is of Liouville measure zero for the class of manifolds considered. That the remaining set $\mathcal{B}_3$ is Liouville null is usually taken as an assumption. In particular, it is clear that this assumption is satisfied by bounded domains in $\mathbb{R}^n$.
\end{remark}
The billiard $M$ is said to be \emph{ergodic} if for $\mu_L$-almost all $z\in \mathcal{D}$ and every ${\mu_L\textrm{-measurable}}$ $A\subseteq S^*M$ we have
\begin{equation}
\lim_{T\rightarrow\infty}\frac{|\{t\in [0,T]:\phi_t(z)\in A\}|}{T}=\mu_L(A).
\end{equation}
The most famous example of an ergodic billiard is the \emph{stadium} in $\mathbb{R}^2$, the ergodicity of which was first studied by Bunimovich \cite{bunimovichstadium}.

Another prototypical example of ergodic billiards is provided by surfaces of constant negative curvature, where ergodicity is a consequence of the hyperbolicity of the flow. A proof of ergodicity in this setting can be found in Hopf \cite{hopf}.

On the other hand, if the billiard flow on $M$ is completely integrable, then individual trajectories are constrained to $n$-dimensional Lagrangian submanifolds specified by the values of the $n$ constants of motion, and certainly do not equidistribute.\\

%An intermediate class of billiards is those that exhibit \emph{mixed dynamics} in the sense that their phase space can be written as a finite union of $\phi_t$-invariant subsets $V_j$ such that the restriction of $\phi_t$ to $V_j$ is either ergodic or completely integrable for each $j$.

The quantum mechanical analogue of the system \eqref{hameq} is the evolution of a wave function $\psi\in L^2(M)$ according to the rescaled Schrodinger's equation
\begin{equation}
\label{schrod}
-\Delta_g \psi=i\frac{\partial \psi}{\partial t}
\end{equation}

with boundary conditions to ensure self-adjointness of the Laplacian. We shall work with the most studied and technically easiest choice of Dirichlet boundary conditions.

Since the boundary of $M$ is Lipschitz, it follows that the Laplacian $-\Delta_g$ is self adjoint on $L^2$ when given the standard domain $H^2(M)\cap H_0^1(M)$. Standard spectral theory then shows that $-\Delta_g$ has purely discrete spectrum (counting multiplicity) $\{0<E_1 \leq E_2 \leq \ldots \}\subset \mathbb{R}^+$.

By choosing a corresponding orthonormal basis of eigenfunctions $(u_j)_{j\in\N}$, we can thus separate variables and formally expand solutions to \eqref{schrod} as 
\begin{equation}u(x,t)=\sum_{j=1}^\infty a_j u_j(x)\exp(-iE_jt).\end{equation}
From this equation, we see that the localisation properties of high energy solutions to \eqref{schrod} are encoded in the high energy eigenfunctions of the operator $-\Delta_g$.\\
 
The phase space localisation of the high energy eigenfunctions of $-\Delta_g$ can be described using the calculus of semiclassical pseudodifferential operators, as defined in Chapters 4 and 14 of \cite{zworski}.

To each subsequence of $(u_j)$, we can associate at least one non-negative Radon measure $\mu$ on $S^*M$ which provides a notion of phase space concentration in the semiclassical limit. 

We say that the eigenfunction subsequence $(u_{j_k})$ has unique \emph{semiclassical measure} $\mu$ if 
\begin{equation}
\lim_{k\rightarrow \infty}\langle a(x,E_{j_k}^{-1/2}D)u_{j_k},u_{j_k} \rangle=\int_{S^*M} a(x,\xi)\, d\mu
\end{equation}
for each semiclassical pseudodifferential operator with principal symbol $a$ compactly supported supported away from the boundary of $S^*M$. In Chapter 5 of \cite{zworski}, the existence and basic properties of semiclassical measures are established using the calculus of semiclassical pseudodifferential operators (see also \cite{gerard-leichtnam}).\\

A billiard $M$ is then defined to be \emph{quantum ergodic} if there is a full density subsequence of eigenfunctions $(u_{n_k})$ such that the the Liouville measure on $S^*M$ is the unique semiclassical measure associated to the sequence $u_{n_k}$. This statement can be interpreted as saying that the sequence of eigenfunctions equidistributes in phase space with the possible exception of a sparse subsequence.

It is a celebrated result due to G\'{e}rard--Leichtnam \cite{gerard-leichtnam} and Zelditch--Zworski \cite{zelditch-zworski} that compact Riemannian manifolds  that have ergodic geodesic flow are quantum ergodic. This generalises the earlier results of Schnirelman \cite{schnirelman}, Zelditch \cite{zelditch} and Colin de Verdi\`{e}re \cite{verdiere} in the boundaryless setting.\\

In this paper we consider the family of mushroom billiards $M_{t}=R_t\cup S\subset \R^2$ where $R_t=[-r_1,r_1]\times[-t,0]$ and $S$ is the closed upper semidisk of radius $r_2>r_1$ centred at the origin. We denote the area of $M_t$ by $A(t)$.

\vspace{15pt}
\begin{center}
\includegraphics[scale=0.5]{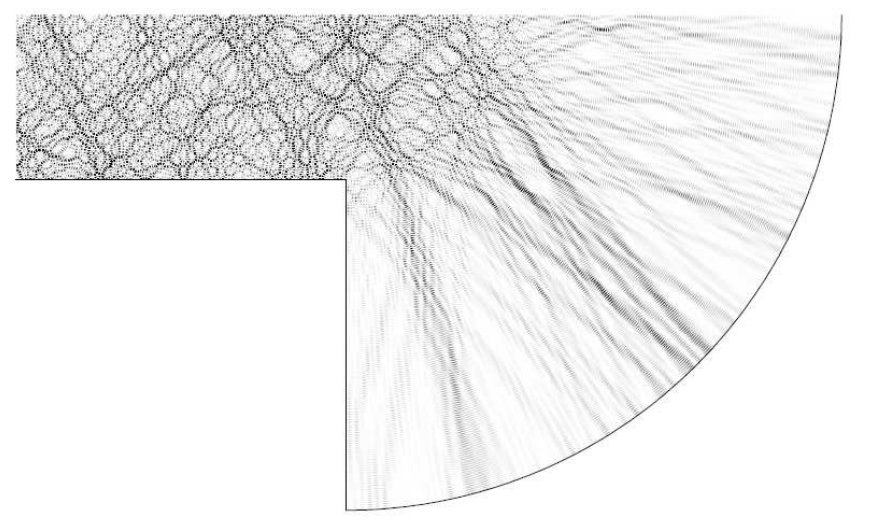}
\end{center}
\begin{center}
Figure 1: The half-mushroom billiard, with a high energy eigenfunction that extends by odd symmetry to the mushroom billiard. This particular eigenfunction appears to live in the ergodic region of phase space. Image courtesy of Dr Barnett.
\end{center}
\vspace{15pt}

This billiard, proposed by Bunimovich \cite{bunimovich} is neither classically ergodic nor completely integrable for $t >0$ and is rather one of the simplest billiards that satisfies the following mixed dynamical assumptions. 

\begin{itemize}
\label{conditions}
\item $M$ is a smooth Riemannian manifold with piecewise smooth boundary\\
\item The flow domain $\mathcal{D}$ is the union of two invariant subsets, each of positive Liouville measure and one of which, $U$, has ergodic geodesic flow\\
\item The billiard flow is completely integrable on $\mathcal{D}\setminus U$.\\
\end{itemize}

In the mushroom billiard, $U_t$ consists of $\mu_L$-almost all trajectories that enter $R_t\cup \overline{B(0,r_1)}$ before their first boundary collision. The trajectories that do not enter $R_t\cup \overline{B(0,r_1)}$ before their first boundary collision lie entirely within the upper semi-annulus $S\setminus \overline{B(0,r_1)}$ and are just reflected trajectories of the disk billiard. The integrability of the geodesic flow on $\mathcal{D}\setminus U_t$ then follows from the integrability of the disk billiard.

In the case of such mixed systems, we do not yet have a satisfactory analogue to the quantum ergodicity theorem. It is a long-standing conjecture of Percival \cite{percival} that a full density  subset of a complete system of eigenfunctions of the Laplace--Beltrami operator can be divided into two disjoint subsets, one corresponding to the ergodic region of phase space and the other corresponding to the completely integrable region. Moreover, the natural density of these subsets is conjectured to be in proportion to the Liouville measures of the corresponding flow-invariant subsets of $\mathcal{D}$. 

\begin{con}[Percival's Conjecture]
\label{percival}
For every compact Riemannian manifold $M$ such that $\mathcal{D}$ is the disjoint union of two invariant subsets $U,\mathcal{D}\setminus U$, with $U$ ergodic and $\mathcal{D}\setminus U$ integrable, we can find two subsets $A,B\subset \N$ such that

\begin{enumerate}
\item $A\cup B$ has density $1$ \\
\item $(u_k)_{k\in A}$ equidistributes in the ergodic region $U$ \\
\item Each semiclassical measure associated to the subset $B$ is supported in the completely integrable region $\mathcal{D} \setminus U$\\
\item The density of $A$ is equal to $\mu_L(U)$.\\
\end{enumerate}
\end{con}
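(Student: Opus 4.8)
The plan is to adapt the quantum ergodicity machinery of Zelditch--Zworski and G\'erard--Leichtnam to the mixed setting, analysing the ergodic component $U$ and the integrable complement $\mathcal{D}\setminus U$ separately and then reconciling the two counts with a local Weyl law. Fix a smoothed version $\chi$ of the indicator $\mathbf{1}_U$ on $S^*M$, chosen so that $\chi$ is invariant under $\phi_t$ up to a set of arbitrarily small Liouville measure (possible since $U$ is flow-invariant: thicken $U$ slightly and average along orbits). Let $Q_k$ be a semiclassical quantisation at energy $E_k$ with principal symbol $\chi$, supported away from $\partial S^*M$. The central quantity is the sequence $m_k := \langle Q_k u_k, u_k\rangle \in [0,1]+o(1)$, which records how much semiclassical mass the eigenfunction $u_k$ places on $U$.

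The first and decisive step is to establish a \emph{dichotomy}: along a density-one subset of $k$, either $m_k \to 0$ or $m_k \to 1$. Egorov's theorem shows that every semiclassical measure of the full sequence $(u_k)$ is $\phi_t$-invariant, so its restrictions to $U$ and to $\mathcal{D}\setminus U$ are separately invariant; but a single eigenfunction can still split its mass. To exclude a persistent split along a positive-density subsequence, one runs a second-moment argument in the spirit of Zelditch, using that $Q_k^2$ has principal symbol $\chi^2 = \chi + O(\text{small})$ together with decay of correlations for the flow restricted to $U$, to bound the averaged variance $\tfrac{1}{N(E)}\sum_{E_k \le E}(m_k - \overline m_E)^2$ and so force the $m_k$ to cluster at the endpoints $0$ and $1$. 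Granting the dichotomy, set $A := \{k : m_k \to 1\}$ and $B := \{k : m_k \to 0\}$, discarding the zero-density exceptional set; then $A \cup B$ has density $1$, giving item (1).

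For item (2), restrict to $k \in A$ and run the usual quantum ergodicity variance estimate, but with the ergodic flow on $U$ in place of the global flow: since the semiclassical mass of these eigenfunctions concentrates in $U$ and $\phi_t|_U$ is ergodic, Birkhoff/von Neumann time-averaging combined with Egorov shows that, after removing a further zero-density subset, every semiclassical measure of $(u_k)_{k\in A}$ is the normalised Liouville measure on $U$, i.e. equidistribution in $U$. For item (3), membership in $B$ already forces every associated semiclassical measure to vanish on the interior of $U$; since $\mathcal{B}_1,\dots,\mathcal{B}_4$ and $\partial S^*M$ are Liouville-null and such measures are flow-invariant, the mass must be carried by $\mathcal{D}\setminus U$. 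For item (4), the density of $A$ equals $\lim_{E\to\infty} \tfrac{1}{N(E)}\sum_{E_k \le E} m_k$, and by the local Weyl (Szeg\H{o}-type) limit theorem $\tfrac{1}{N(E)}\sum_{E_k\le E}\langle Q_k u_k,u_k\rangle \to \tfrac{1}{\mu_L(S^*M)}\int \chi\, d\mu_L = \mu_L(U)$, which is the claimed density.

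The main obstacle is the dichotomy step. There is, in general, no decoupling of the quantum dynamics across the classical partition $\mathcal{D} = U \sqcup (\mathcal{D}\setminus U)$: eigenfunctions can carry mass on both sides through quantum tunneling across the separatrix $\partial U$, and the bare variance bound does not by itself rule out a positive-density family with $m_k$ bounded away from $0$ and $1$. Closing this gap requires quantitative control of the geometry of $\partial U$ --- for billiards, a set of marginally stable orbits --- and of the rate of mixing on $U$ out to the Ehrenfest time, precisely the kind of model-dependent input that one can hope to supply for the Bunimovich mushroom but which is not available for an arbitrary mixed manifold. In the stated generality, therefore, this outline remains conditional on that dichotomy; the task that lies ahead is to make every step unconditional for the specific family $M_t$, which is where the parameter $t$ and an accompanying almost-everywhere argument will enter.
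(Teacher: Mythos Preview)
The statement you are addressing is labelled a \emph{conjecture} in the paper, and the paper does not attempt a proof in this generality; it establishes the conclusion only for the mushroom billiards $M_t$ for almost every $t\in(0,2]$. Your outline is candid about its own gap: the dichotomy step (forcing $m_k$ to cluster at $0$ or $1$ along a density-one set) is not justified by any variance or second-moment argument you can actually carry out, and this is precisely why the conjecture remains open. A variance bound for $\langle Q_k u_k, u_k\rangle$ with symbol $\chi\approx\mathbf{1}_U$ does not decouple the two regions: ergodic time-averaging on $U$ says nothing about mass sitting in $\mathcal{D}\setminus U$, and there is no mechanism in your sketch to suppress the cross-terms or to rule out a positive-density family with $m_k$ bounded away from both endpoints.

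The paper's route for the mushroom is entirely different and does not pass through a direct dichotomy-by-variance. It constructs an explicit family of $O(n^{-\infty})$ quasimodes $(v_n,\alpha_n^2)$ supported in the integrable region (cut-off semidisk eigenfunctions), counts them via Bessel-zero asymptotics to match $d(t)=1-\mu_L(U_t)$, and proves an abstract spectral lemma (Proposition~\ref{spectral}) showing that if the number of true eigenvalues in $\bigcup_i[\alpha_i^2-c,\alpha_i^2+c]$ exceeds the number of quasi-eigenvalues by at most a small proportion, then most of the corresponding eigenfunctions are $L^2$-close to the span of the $v_n$ and hence localise in $\mathcal{D}\setminus U$. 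The required spectral non-concentration is then obtained not for a fixed billiard but by varying the stalk length $t$: the Hadamard variational formula combined with Galkowski's Theorem~\ref{galkowski} bounds the speed $E_j^{-1}\dot E_j$ of eigenvalue flow, and a measure-theoretic contradiction argument shows that the non-concentration condition can fail only on a Lebesgue-null set of $t$. The ergodic family $A_t$ is the complement of the integrable family $B_t$, and Galkowski's theorem plus the local Weyl law forces equidistribution there. None of this machinery --- explicit quasimodes, the approximation lemma, eigenvalue flow in a parameter, the almost-everywhere argument --- appears in your sketch; conversely, your proposed variance-based dichotomy plays no role in the paper's argument.
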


Numerical evidence due to Barnett-Betcke \cite{barnett} has strongly supported this conjecture for the mushroom billiard. In this paper, we prove Conjecture \ref{percival} is indeed true for the mushroom billiard, at least for almost all $t\in (0,2]$. Essential in our work is the following result due to Galkowski \cite{galkowski}.

\begin{thm}
\label{galkowski}
For any compact Riemannian manifold with boundary satisfying \eqref{conditions}, there exists a full density subsequence of $(u_j)$, such that every associated semiclassical measure $\mu$ satisfies
\begin{equation}
\mu|_U = a\mu_L|_U
\end{equation}
for some constant $a$.
\end{thm}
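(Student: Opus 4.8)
We would prove Theorem~\ref{galkowski} as a quantum ergodicity statement in which the test observables are confined to the invariant ergodic set $U$, running the Schnirelman--Zelditch--Colin de Verdi\`{e}re variance argument in the piecewise-smooth-boundary framework of G\'{e}rard--Leichtnam and Zelditch--Zworski \cite{gerard-leichtnam,zelditch-zworski}. The key structural observation is that although $\phi_t$ need not be ergodic on $\mathcal D$, its restriction to $U$ is, and since $U$ is $\phi_t$-invariant, an observable microlocalised inside $U$ stays inside $U$ along the flow; time-averaging such observables therefore behaves exactly as in the boundaryless ergodic case. Write $h=E_j^{-1/2}$ on the $j$-th eigenspace, $\mathrm{Op}_h(a)=a(x,hD)$, $N(E)=\#\{j:E_j\le E\}$, $\hat\mu_U=\mu_L(U)^{-1}\mu_L|_U$ (a probability measure since $\mu_L(U)>0$), and let $\mathcal G\subset S^*M$ be the ``good'' region obtained by deleting neighbourhoods of the boundary of $S^*M$, of the glancing set, and of $\mathcal B_1\cup\cdots\cup\mathcal B_4$ (all Liouville null). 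The three ingredients are: (i) an Egorov theorem for the broken geodesic flow -- for $a\in C^\infty_c(\mathcal G)$ and any fixed $T>0$,
\begin{equation}
\frac1T\int_0^T e^{-is\sqrt{-\Delta_g}}\,\mathrm{Op}_h(a)\,e^{is\sqrt{-\Delta_g}}\,ds=\mathrm{Op}_h(a_T)+O_{L^2\to L^2}(h),\qquad a_T:=\frac1T\int_0^T a\circ\phi_s\,ds,
\end{equation}
valid after mollifying $a_T$ to a genuine symbol with $L^2(\mu_L)$-small error; (ii) the local Weyl law $N(E)^{-1}\sum_{E_j\le E}\langle\mathrm{Op}_h(b)u_j,u_j\rangle\to\int_{S^*M}b\,d\mu_L$ for a fixed symbol $b$, with its standard consequence (Cauchy--Schwarz and the sharp G\aa rding inequality) $\limsup_{E\to\infty}N(E)^{-1}\sum_{E_j\le E}|\langle\mathrm{Op}_h(b)u_j,u_j\rangle|^2\le\int|b|^2\,d\mu_L$; and (iii) von Neumann's mean ergodic theorem for $\phi_t|_U$, which is ergodic for $\hat\mu_U$ by hypothesis.

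\emph{The variance step.} Fix $a,b\in C^\infty_c(\mathcal G)$ supported in the interior of $U$, put $\langle f\rangle_U=\int f\,d\hat\mu_U$, and set $c=\langle b\rangle_U\,a-\langle a\rangle_U\,b$, so $\int_U c\,d\mu_L=0$. Invariance of $U$ gives $\mathrm{supp}(c\circ\phi_s)\subset U$ for all $s$, hence $\mathrm{supp}(c_T)\subset U$, and by (iii) $c_T\to0$ in $L^2(S^*M,\mu_L)$ as $T\to\infty$. Since $u_j$ is an eigenfunction, $e^{is\sqrt{-\Delta_g}}u_j=e^{is\sqrt{E_j}}u_j$, so $\langle\mathrm{Op}_h(c)u_j,u_j\rangle$ is unchanged by conjugating by the propagator and averaging over $s\in[0,T]$; by (i), $\langle\mathrm{Op}_h(c)u_j,u_j\rangle=\langle\mathrm{Op}_h(c_T)u_j,u_j\rangle+O_T(E_j^{-1/2})$. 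Bounding $|\langle\mathrm{Op}_h(c_T)u_j,u_j\rangle|^2\le\langle\mathrm{Op}_h(c_T)^*\mathrm{Op}_h(c_T)u_j,u_j\rangle$, summing over $E_j\le E$, dividing by $N(E)$, and using (ii) for the symbol $|c_T|^2$ yields $\limsup_{E\to\infty}N(E)^{-1}\sum_{E_j\le E}|\langle\mathrm{Op}_h(c)u_j,u_j\rangle|^2\le\int|c_T|^2\,d\mu_L$ for every $T$, hence this $\limsup$ vanishes. Writing $I_j(f)=\langle\mathrm{Op}_{E_j^{-1/2}}(f)u_j,u_j\rangle$, we have shown, for all such $a,b$,
\begin{equation}
\frac1{N(E)}\sum_{E_j\le E}\big|\langle b\rangle_U\,I_j(a)-\langle a\rangle_U\,I_j(b)\big|^2\xrightarrow[E\to\infty]{}0.
\end{equation}

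\emph{Diagonalisation and identification of the limit.} Pick a countable $\{a_n\}\subset C^\infty_c(\mathcal G)$, each supported in the interior of $U$, dense among such symbols. By the previous estimate and the standard density-one selection lemma (countably many bounded, Ces\`aro-null sequences of nonnegatives can be made simultaneously null along one density-one subsequence), there is a density-one $\Lambda\subset\N$ with $\langle a_m\rangle_U\,I_j(a_n)-\langle a_n\rangle_U\,I_j(a_m)\to0$ along $\Lambda$ for all $m,n$. This $\Lambda$ is the claimed subsequence: if $\Lambda''\subseteq\Lambda$ carries a semiclassical measure $\mu$, then $I_j(a_n)\to\int a_n\,d\mu$ along $\Lambda''$, so $\langle a_m\rangle_U\int a_n\,d\mu=\langle a_n\rangle_U\int a_m\,d\mu$ for all $m,n$, forcing $\int a_n\,d\mu/\langle a_n\rangle_U$ to be a single constant $\kappa$ (with the convention $\langle a_n\rangle_U=0\Rightarrow\int a_n\,d\mu=0$). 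Thus $\int a_n\,d\mu=\kappa\mu_L(U)^{-1}\int a_n\,d\mu_L$ for all $n$; letting $a_n$ run through the family and using density, $\mu$ agrees with $\kappa\mu_L(U)^{-1}\mu_L$ on Borel subsets of $\mathrm{int}(U)\cap\mathcal G$. It then remains to see that $\mu$ assigns no mass to the part of $U$ outside $\mathrm{int}(U)\cap\mathcal G$ -- a set that is Liouville null and, using that semiclassical measures for these billiards live on $S^*M$ and are $\phi_t$-invariant while $U$ is, modulo a null set, a relatively open union of complete trajectories, is $\mu$-null on the $U$-side; granting this, $\mu|_U=\kappa\mu_L(U)^{-1}\mu_L|_U$, as required.

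\emph{Main obstacle.} The skeleton above is the classical argument; the genuine difficulty is microlocal and lives at the boundary. The Egorov theorem (i) for the \emph{broken} flow is the heart of the matter: $\phi_t$ is discontinuous at reflections, undefined on $\bigcup_j\mathcal B_j$ and at glancing, and $a_T$ is at best Lipschitz near the boundary of $S^*M$, so one needs symbol estimates that are uniform as $\mathrm{supp}(a)$ approaches the glancing set together with a mollification of $a_T$ whose error is controlled in $L^2(\mu_L)$ -- precisely the technology of \cite{zelditch-zworski} -- and this is the step I expect to occupy most of the work. The secondary obstacle is the final extension from $\mathrm{int}(U)\cap\mathcal G$ to $U$: one must rule out semiclassical mass escaping to the boundary of $S^*M$ or to the exceptional null set on the $U$-side. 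Whispering-gallery-type concentration does occur for such billiards, but lies in the integrable region $\mathcal D\setminus U$ and hence does not affect $\mu|_U$; making this rigorous via invariance of $\mu$ and the geometry of $U$ is the remaining technical point. The variance computation, the local Weyl law for the Dirichlet Laplacian on a manifold with piecewise smooth boundary, von Neumann's theorem, and the diagonalisation are then routine.
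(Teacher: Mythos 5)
First, a point of comparison: the paper does not prove Theorem \ref{galkowski} at all --- it is imported as a black box from Galkowski \cite{galkowski}, so there is no internal proof to measure your argument against; the relevant comparison is with the cited reference, which, like your sketch, adapts the Schnirelman--Zelditch--Colin de Verdi\`ere variance argument to billiards using the G\'erard--Leichtnam/Zelditch--Zworski machinery (Egorov for the broken flow, interior local Weyl law, mean ergodic theorem, diagonal extraction). Your outline of those ingredients, and your identification of the boundary Egorov theorem as the main analytic burden, is reasonable as far as it goes.

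The genuine gap is your localisation of the test observables to $\mathrm{int}(U)$. The hypotheses only make $U$ a flow-invariant set of positive Liouville measure; nothing gives it nonempty interior, let alone $\mu_L(U\setminus\mathrm{int}(U))=0$, so in the stated generality your variance step may have no admissible symbols at all, and in any case it only identifies $\mu$ on $\mathrm{int}(U)\cap\mathcal{G}$ (and even there only after an exhaustion letting the deleted neighbourhoods of the boundary, the glancing set and the $\mathcal{B}_j$ shrink --- your $\mathcal{G}$ is fixed and omits a set of positive Liouville measure). The closing assertion that the remainder $U\setminus(\mathrm{int}(U)\cap\mathcal{G})$ carries no $\mu$-mass cannot be waved through: that set is Liouville-null only under extra assumptions on $U$, and semiclassical measures routinely charge Liouville-null invariant sets --- that is precisely the phenomenon in the integrable region, and flow-invariance of $\mu$ cannot exclude it, since the residual set is itself invariant. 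Even for the mushroom, where $U_t$ is open modulo null sets, the invariant family of trajectories tangent to the inner circle $r=r_1$ lies in $U_t\setminus\mathrm{int}(U_t)$, and a defect measure concentrating on it is completely invisible to your density-one selection, because every selection criterion you impose involves only symbols supported inside $\mathrm{int}(U)$. So your argument, as written, yields $\mu\propto\mu_L$ on $\mathrm{int}(U)\cap\mathcal{G}$ but not the stated conclusion $\mu|_U=a\,\mu_L|_U$; repairing this requires either additional hypotheses on $U$ or an argument that does not hinge on smooth cutoffs supported inside $U$, which is exactly the difficulty the cited proof has to confront.
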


Our strategy for this proof is motivated by that used by Hassell in constructing the first known example of a non-QUE ergodic billiard \cite{hassell}. \\

We begin in Section 2 by using the Dirichlet eigenfunctions on the semicircle to construct a family $(v_n,\alpha_n^2)$ of $O(n^{-\infty})$ quasimodes that are almost orthogonal and are microlocally supported in the completely integrable region $S^*M_t\setminus U_t$.

Using the well-known asympotics of the Bessel function zeroes, we obtain a lower bound \eqref{quasiasymptoticeqn} for the counting function of this quasimode family.\\

In Section 3, the main result is Proposition \ref{spectral}, an abstract spectral theoretic result that allows us to approximate certain  eigenfunctions by linear combinations of quasimodes of similar energy given that the numbers of each are comparable. This is the essential ingredient for passing from localisation properties about our explicit family of quasimodes to localisation properties of a family of eigenfunctions with asymptotically equivalent counting function.\\

In Section 4 we commence our study of the variation of eigenvalues as the stalk length $t$ varies in $(0,2]$. In order to simplify the nomenclature, we often interpret $t$ as a time parameter. 

The Hadamard variational formula asserts that
\begin{equation}
\label{classhad}
\dot{E}(t)=-\int_{\partial M_t} \rho_t(s)(d_n u(t)(s))^2 \, ds
\end{equation}
where $\rho_t(s)$ is the unit normal variation of the domain at a boundary point $s$. 
For normally expanding domains such as ours, \eqref{classhad} directly implies that individual eigenvalues are non-increasing in $t$.

However, using an interior formulation of the Hadamard variational formula from Proposition \ref{hadamard}, we can also quantify the variation of the  eigenvalue $E_j(t)$ by
\begin{equation}
E_j^{-1}(t)\dot{E_j}(t)=\langle Qu_j(t),u_j(t)\rangle
\end{equation}
for an appropriate pseudodifferential operator $Q$ supported in the stalk $R_t\subset M_t$. 

Proposition \ref{heatkernelprop} then establishes that for a full density subset of the eigenvalues, the quantity $\langle Qu_j(t),u_j(t)\rangle$ can be approximated up to an error of $o(E_j)$ by cutting off $Q$ sufficiently close to the boundary $\partial M_t$. This result is shown by using analysis of the wave kernel to establish the key spectral projector estimates \eqref{projest} and \eqref{gradprojest}. 

We can then use the equidistribution result of Galkowski's Theorem \ref{galkowski} to asymptotically  control $\langle Qu_j(t),u_j(t)\rangle$ and hence provide us with an upper bound \eqref{flowspeedeq} on the speed of eigenvalue variation for almost all eigenvalues.\\

Section 5 completes the argument in two parts. 

In the first of these parts, we define a set $\mathcal{G}\subset (0,2]$ such that for $t\in\mathcal{G}$, we have a certain spectral non-concentration property on $M_t$. Precisely, we have that 
\begin{equation}\begin{gathered}
\text{
the number of eigenvalues lying in the union $\cup_{j=1}^n [\alpha_j^2-c,\alpha_j^2+c]$} \\ 
\text{can exceed $n$ by at most a small proportion, for large $n$.}
\end{gathered}\label{snc}\end{equation}

Proposition \ref{spectral} then implies that these eigenfunctions are asymptotically well-approximated by linear combinations of the previously constructed family of quasimodes $(v_n)$ which are microlocally supported in the completely integrable region $S^*M_t\setminus U_t$ of phase space.

%The first of these parts show that if $t$ is such that we can break the spectrum up into clusters with a comparable number of nearby quasi-eigenvalues, then Proposition \ref{spectral} together with the equidistribution result Theorem \ref{galkowski} of Galkowski allow us to prove Conjecture \ref{percival} for this value of $t$. We denote the set of all such $t$ by $\mathcal{G}$.

%This proof hinges on the fact that the family of eigenfunctions whose existence is established by Proposition \ref{spectral} has all of its semiclassical mass in the completely integrable region.  
In fact, the explicit computation \eqref{quasiasymptoticeqn} of the counting function of these quasimodes leads to a proof that the corresponding family of eigenfunctions must fill up phase space. We show this is Theorem \ref{mainthm}.

%Moreover, the counting function of these eigenfunctions has the maximal growth rate for such a concentrating sequence that is allowed by the local Weyl law as stated in Lemma 4 of \cite{zelditch-zworski}. 

Consequently, we show in Proposition \ref{ergodicprop} that a full density subset of the complementary family of eigenfunctions must have all semiclassical mass in the ergodic region $U_t$. From Theorem \ref{galkowski}, this family must then equidistribute in $U_t$ as required.\\

The final part of the paper establishes via contradiction that $(0,2]\setminus \mathcal{G}$ is Lebesgue-null. 
As in \cite{hassell} we can choose the eigenvalue branches $E_j(t)$ to be in increasing order and piecewise smooth in $t$. The crucial ingredient here is then the asymptotic bound \eqref{flowspeedeq} on the speed of eigenvalue variation. 

If $\mathcal{G}$ is not of full measure, we can construct a small interval $\mathcal{I}=[t_1,t_2]$ in which the average number of eigenvalues $E_j(t)$ lingering near quasi-eigenvalues $\alpha_i^2$ exceeds $d=d(t_1)=1-\mu_L(U_{t_1})$ by using the negation of \eqref{snc}. 

Now Weyl's law
\begin{equation}
N_t(\lambda^2)\sim \frac{\lambda^2 |M_t|}{4\pi}
\end{equation}
implies that the decrease of eigenvalues over $\mathcal{I}$ is asymptotically given by
\begin{equation}
\label{weyl}
E_j(t_1)-E_j(t_2)\sim 4\pi j (A(t_1)^{-1}-A(t_2)^{-1})
\end{equation}
in $\mathcal{I}$ as $j\rightarrow \infty$.

We can use \eqref{weyl} together with the fact that the small windows about quasi-eigenvalues are comparatively sparse in the interval $[E_j(t_2),E_j(t_1)]$ to show that the upper bound \eqref{flowspeedeq} on eigenvalue speed provides a lower bound of $(1-d)$ on the time they must spend travelling outside of quasi-eigenvalue windows. 

This implies that the average proportion of time spent by large eigenvalues lingering near quasi-eigenvalues for $t\in\mathcal{I}$ cannot exceed $d$, and consequently that the proportion of lingering eigenvalues cannot exceed $d$. This contradiction concludes the proof.\\

I would like to thank my doctoral supervisor Professor Hassell for suggesting this problem and for our many fruitful discussions regarding it.

\section{Quasimodes}

In polar coordinates, the Dirichlet eigenfunctions for the semidisk are given by
\begin{equation}
u_{n,k}:=\sin(n\theta)J_n(\alpha_{n,k}r/r_2)
\end{equation}
where $\alpha_{n,k}$ is the $k$-th positive zero of the $n$-th order Bessel function $J_n$.

\begin{prop}
If we define
\begin{equation}
v_{n,k}:=\frac{\chi(r)u_{n,k}}{\|\chi u_{n,k}\|_{L^2}},
\end{equation}
where
\begin{equation}
\chi(r)=\begin{cases}
0 &\mbox{for }r \leq r_1\\
1 &\mbox{for }r\geq (r_1+\epsilon)\sqrt{1-\epsilon^2}>r_1.
\end{cases}
\end{equation}
then the family
\begin{equation}
\label{family}
\{(v_{n,k},\alpha_{n,k}^2/r_2^2):\alpha_{n,k}<\frac{nr_2}{r_1+\epsilon}\}
\end{equation}
forms an $O(n^{-\infty})$ family of quasimodes, with all semiclassical mass contained in the completely integrable region $S^{*}M_t\setminus U_t$ of the billiard.

Moreover, these quasimodes are almost orthogonal, in the sense that
\begin{equation}
|\langle v_{n,k},v_{m,l}\rangle |=O(\min(n,m)^{-\infty})=O(\min(\alpha_{n,k},\alpha_{m,l})^{-\infty}).
\end{equation}
\end{prop}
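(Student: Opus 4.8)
The plan is to reduce every assertion to classical facts about Bessel functions and a one–line WKB computation. Throughout, write $h=h_{n,k}:=r_2/\alpha_{n,k}$ for the semiclassical parameter attached to $(v_{n,k},\alpha_{n,k}^2/r_2^2)$. Since the first positive zero of $J_n$ exceeds $n$, every quasimode in \eqref{family} satisfies $n<\alpha_{n,k}<nr_2/(r_1+\epsilon)$, so $\alpha_{n,k}\asymp n$ and the rescaled angular momentum $L:=hn=nr_2/\alpha_{n,k}$ lies in the fixed interval $(r_1+\epsilon,r_2)$. The elementary input that the (at first sight strange) cutoff threshold $(r_1+\epsilon)\sqrt{1-\epsilon^2}$ is engineered to produce is the following: on the set $T:=\{r\le (r_1+\epsilon)\sqrt{1-\epsilon^2}\}$, which contains the region where $\chi$ is non-constant, one has $\alpha_{n,k}r/r_2\le n\sqrt{1-\epsilon^2}$, i.e. the Bessel argument stays in the classically forbidden zone, bounded away from the turning point $x=n$ by the fixed factor $\sqrt{1-\epsilon^2}<1$. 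The large-order Debye asymptotics for $J_\nu(x)$ with $0<x\le\nu(1-\delta)$ then give $|J_n(x)|+|J_n'(x)|=O(e^{-c(\epsilon)n})$ uniformly, and hence $\sup_T\big(|u_{n,k}|+|\nabla u_{n,k}|\big)=O(n\,e^{-cn})$ after absorbing the $O(n)$ factor from $\partial_r$.

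First I would record that $v_{n,k}$ is an admissible normalised element of the Dirichlet form domain: $\chi$ vanishes on $\{r\le r_1\}\supset R_t$, so $\chi u_{n,k}$ is supported in the closed semi-annulus, is smooth there, vanishes on the outer semicircle (because $u_{n,k}$ does) and on the diameter (because $\sin n\theta$ does) and at the two right-angled corners where those meet, so $\chi u_{n,k}\in H^2(M_t)\cap H^1_0(M_t)$, and $\|v_{n,k}\|_{L^2}=1$ by construction. Since $(-\Delta-\alpha_{n,k}^2/r_2^2)u_{n,k}=0$ on the semidisk,
\[
\Big(-\Delta-\frac{\alpha_{n,k}^2}{r_2^2}\Big)(\chi u_{n,k})=-[\Delta,\chi]u_{n,k}=-(\Delta\chi)u_{n,k}-2\chi'(r)\,\partial_r u_{n,k},
\]
which is supported in $T$, hence of $L^2$-norm $O(n\,e^{-cn})$. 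It remains to bound $\|\chi u_{n,k}\|_{L^2}$ below: $\|\chi u_{n,k}\|_{L^2}^2\ge \|u_{n,k}\|_{L^2(S)}^2-|T|\sup_T|u_{n,k}|^2=\tfrac{\pi r_2^2}{4}J_n'(\alpha_{n,k})^2-O(e^{-2cn})$, and $|J_n'(\alpha_{n,k})|$ is bounded below by a fixed negative power of $n$ for $n<\alpha_{n,k}<Cn$, by the standard large-order asymptotics of $J_n'$ at its zeros. Dividing, $\|(-\Delta-\alpha_{n,k}^2/r_2^2)v_{n,k}\|_{L^2}=O(e^{-cn}\,\mathrm{poly}(n))=O(n^{-\infty})$.

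For the phase-space localisation, note that on its support $v_{n,k}$ is a positive constant multiple of the integrable-billiard eigenfunction $\sin(n\theta)J_n(\alpha_{n,k}r/r_2)$; split $v_{n,k}=v^++v^-$ with $v^\pm$ proportional to $\chi(r)e^{\pm in\theta}J_n(\alpha_{n,k}r/r_2)$, so that $-ih\partial_\theta v^\pm=\pm L\,v^\pm$ with $L>r_1+\epsilon$. Given a semiclassical symbol $b$ supported in $\{(x,\xi)\in S^*M_t:\ |x_1\xi_2-x_2\xi_1|<r_1\}$ — the covectors whose angular momentum (hence whose free trajectory through $\{r_1<|x|<r_2\}$) has modulus $<r_1$, which over the semi-annulus is exactly the portion of $U_t$ accessible to $v_{n,k}$ — choose a cutoff $c=c(p_\theta)$ equal to $1$ near $\{|p_\theta|\le r_1\}$ and supported in $\{|p_\theta|<r_1+\epsilon\}$. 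Then $\mathrm{Op}_h(c)$ is a function of $-ih\partial_\theta$ alone, so it commutes with multiplication by functions of $r$, and $\mathrm{Op}_h(c)v^\pm=c(\pm L)v^\pm=0$ for $n$ large. Since $b$ and $bc$ have the same symbol near $\mathrm{supp}\,v_{n,k}$ we get $\mathrm{Op}_h(b)=\mathrm{Op}_h(b)\mathrm{Op}_h(c)+O_{L^2\to L^2}(h^\infty)$ there, and, using that $v^+$ and $v^-$ are microlocally disjoint in $p_\theta$, $\langle\mathrm{Op}_h(b)v_{n,k},v_{n,k}\rangle=O(h^\infty)\to 0$. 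Hence every semiclassical measure $\mu$ of the family annihilates $\{|x_1\xi_2-x_2\xi_1|<r_1\}$; combined with the spatial constraint $r_1\le|x|\le r_2$ this forces $\mathrm{supp}\,\mu$ onto covectors that are chords of $B(0,r_2)$ at distance $\ge r_1$ from the origin. Such a chord meets the diameter only at points $(x',0)$ with $|x'|\ge r_1$, i.e. at wall points of $\partial M_t$, so the corresponding trajectory never enters $R_t\cup\overline{B(0,r_1)}$ and lies in the completely integrable region $S^*M_t\setminus U_t$; thus $\mu(U_t)=0$.

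Almost orthogonality is then essentially a corollary. If $n\neq m$, then $\langle v_{n,k},v_{m,l}\rangle$ factors through $\int_0^\pi \sin(n\theta)\sin(m\theta)\,d\theta=0$ and vanishes identically. If $n=m$ and $k\neq l$, Bessel orthogonality on $[0,r_2]$ with weight $r$ gives $\int_0^{r_2}J_n(\alpha_{n,k}r/r_2)J_n(\alpha_{n,l}r/r_2)\,r\,dr=0$, so
\[
\langle v_{n,k},v_{n,l}\rangle=\frac{\pi/2}{\|\chi u_{n,k}\|\,\|\chi u_{n,l}\|}\int_{\{r\le(r_1+\epsilon)\sqrt{1-\epsilon^2}\}}\big(\chi(r)^2-1\big)\,J_n(\tfrac{\alpha_{n,k}r}{r_2})J_n(\tfrac{\alpha_{n,l}r}{r_2})\,r\,dr,
\]
whose integrand is $O(e^{-2cn})$ (both arguments lie in $[0,n\sqrt{1-\epsilon^2}]$) and which, divided by the normalising constants (each bounded below by a power of $n$ as above), is $O(n^{-\infty})$. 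Finally $\alpha_{n,k}\asymp n$ throughout the family, so $O(n^{-\infty})=O(\min(n,m)^{-\infty})=O(\min(\alpha_{n,k},\alpha_{m,l})^{-\infty})$. The main obstacle to get right is the uniform exponential smallness of $J_n$ and $J_n'$ throughout the forbidden zone $\{x\le n\sqrt{1-\epsilon^2}\}$ paired with the matching polynomial lower bound on $\|\chi u_{n,k}\|_{L^2}$, so that the quasimode error is genuinely tiny rather than a quotient of two small quantities; a secondary technical point is running the microlocal test cleanly in the polar coordinates adapted to the curved part of $\partial M_t$. Conceptually, the heart of the matter is the identification of the angular-momentum threshold $L>r_1$ with the condition that a disk-billiard trajectory never reaches the stalk, i.e. with membership in the integrable region $\mathcal{D}\setminus U_t$ of the mushroom.
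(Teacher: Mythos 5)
Your estimates for the quasimode error and the almost orthogonality follow the same route as the paper: both arguments rest on the observation that the cutoff threshold $(r_1+\epsilon)\sqrt{1-\epsilon^2}$ together with the constraint $\alpha_{n,k}<nr_2/(r_1+\epsilon)$ keeps the Bessel argument in the classically forbidden range $[0,n\sqrt{1-\epsilon^2}]$, where the uniform large-order bounds on $J_n,J_n'$ give exponential smallness, and both then quote exact orthogonality of the $u_{n,k}$ to get near-orthogonality of the $v_{n,k}$ (you are in fact more careful than the paper, since you also supply the polynomial lower bound on $\|\chi u_{n,k}\|_{L^2}$ needed to divide by the normalising constant). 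Where you genuinely diverge is the localisation of semiclassical mass. The paper only tests against symbols spatially supported in $R_t\cup B(0,r_1)$, concludes from disjointness of supports that $\mu$ has no mass over that region, and then invokes flow invariance of semiclassical measures: the preimages under the billiard flow of $\{(x,\xi):x\in R_t\cup B(0,r_1)\}$ cover $U_t$, so $\mu(U_t)=0$. You instead exploit the separated form of the quasimodes directly: splitting into $e^{\pm in\theta}$ modes, noting the rescaled angular momentum $hn$ lies in $(r_1+\epsilon,r_2)$, and using a cutoff in $-ih\partial_\theta$ (which commutes exactly with radial multiplication) to kill all symbols supported at angular momenta below $r_1$; combined with the spatial support this pins $\mathrm{supp}\,\mu$ onto the whispering-gallery covectors of the disk billiard, which is exactly $\mathcal{D}_t\setminus U_t$. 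Your route is more self-contained and quantitative (it does not need the propagation theorem for semiclassical measures, which is a little delicate in the presence of boundary), at the cost of more bookkeeping in polar coordinates; the paper's route is shorter and works without knowing the explicit angular structure of the quasimodes. Two small points to tidy: $R_t\not\subset\{r\le r_1\}$ (the correct statement is that $\chi u_{n,k}$ vanishes near the mouth $\{|x'|<r_1,\ y=0\}$, so its extension by zero to the stalk is admissible), and since your test symbols are supported in the open set $\{|p_\theta|<r_1\}$ you should, to cover the closed set $U_t\cap\{|p_\theta|=r_1\}$, enlarge the cutoff to $\{|p_\theta|<r_1+\epsilon/2\}$ — which your argument permits, as the quasimodes have angular momentum $\ge r_1+\epsilon$.
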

\begin{proof}

The restriction on $k$ in our family implies that the error incurred in cutting off only depends on the values of the Bessel function $J_n(x)$ for $x\in[0,n\sqrt{1-\epsilon^2}]$.

Then from \cite{absteg}, we have the estimates
\begin{equation}
|J_n(nx)|\leq \frac{x^ne^{\sqrt{1-x^2}}}{(1+\sqrt{1-x^2})^n}\quad \textrm{ for $x\leq 1$}
\end{equation}
and
\begin{equation}
|J_n'(nx)|\leq \frac{(1+x^2)^{1/4}x^ne^{\sqrt{1-x^2}}}{x\sqrt{2\pi n}(1+\sqrt{1-x^2})^n}\quad \textrm{ for $x\leq 1$}
\end{equation}
for bounding the Bessel function near $0$.

Together these estimates imply that the error incurred by cutting off is $O(n^{-\infty})$. Furthermore, as the $u_{n,k}$ are pairwise orthogonal, these bounds also show that the $v_{n,k}$ are almost orthogonal in the sense claimed.\\

Now for any smooth compactly supported symbol $a$ spatially supported in $R_t\cup B(0,r_1)$, the disjointness of supports from our family of quasimodes implies that
\begin{equation}
\langle a(x,(r_2/\alpha_{n,k})D)v_{n,k},v_{n,k}\rangle=O(n^{-\infty}).
\end{equation}
In particular, we have that any semiclassical measure $\mu$ associated to these quasimodes cannot have mass in the region $\{(x,\xi)\in S^*M_t:x\in R_t\cup B(0,r_1)\}$.

Moreover, by the flow invariance of semiclassical measures (See Theorem 5.4 in \cite{zworski}), this implies that any corresponding semiclassical measure cannot have mass in the ergodic region $U_t$ because the pre-images under geodesic flow of \\ \noindent ${\{(x,\xi)\in \mathcal{D}_t:x\in R_t\cup B(0,r_1)\}}$ cover $U_t$.
\end{proof}

\begin{prop}
\label{decay}
We can index these quasimodes as $(v_n,\alpha_n^2)$ so that the quasi-eigenvalues are in increasing order, whilst having
\begin{equation}
(\Delta+\alpha_n^2)v_n=O(n^{-\infty})=O(\alpha_n^{-\infty})
\end{equation}
and
\begin{equation}
|\langle v_n,v_k\rangle|=O(\min(n,k)^{-\infty})=O(\min(\alpha_n,\alpha_k)^{-\infty}).
\end{equation}
\end{prop}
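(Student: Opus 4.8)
The plan is to reindex the two-parameter family $\{(v_{n,k},\alpha_{n,k}^2/r_2^2)\}$ from the previous proposition as a single sequence ordered by quasi-eigenvalue, and then check that the two estimates survive this reindexing. First I would observe that the admissible pairs $(n,k)$ in \eqref{family} satisfy $\alpha_{n,k} < n r_2/(r_1+\epsilon)$, and since the $k$-th positive zero of $J_n$ obeys $\alpha_{n,k} > j_{n,1} \sim n$ for all $k$ (indeed $\alpha_{n,k}$ grows with both $n$ and $k$), the constraint forces $k$ to range over a finite set whose size grows with $n$; in particular, for each fixed threshold there are only finitely many admissible pairs with $\alpha_{n,k}/r_2$ below that threshold, so the reindexing into an increasing sequence $(v_n, \alpha_n^2)$ is well-defined. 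The key point to extract is a lower bound $n \gtrsim$ (some positive power of) the new index, or more precisely that in the reindexed sequence the order $n$ of the Bessel function tends to infinity; this is immediate because for any fixed $N$ there are only finitely many admissible pairs with Bessel-order at most $N$ (the zeros $\alpha_{N,k}$ are discrete and the cutoff $\alpha_{N,k} < N r_2/(r_1+\epsilon)$ leaves only finitely many $k$), so in the new enumeration the Bessel-order diverges.

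With that in hand, the quasimode estimate is essentially a restatement: in the previous proposition we showed $(\Delta + \alpha_{n,k}^2/r_2^2) v_{n,k} = O(n^{-\infty})$, where the implied constants are uniform once $n$ is large, and $\alpha_{n,k} \asymp n$ in the relevant range (from $n < \alpha_{n,k} < n r_2/(r_1+\epsilon)$ we get comparability), so the error is simultaneously $O(n^{-\infty})$ and $O(\alpha_{n,k}^{-\infty})$. After reindexing, since the Bessel-order in position $n$ of the new sequence tends to infinity and is comparable to $\alpha_n$, the bound reads $(\Delta + \alpha_n^2) v_n = O(n^{-\infty}) = O(\alpha_n^{-\infty})$, where here the first $n^{-\infty}$ should be understood via the divergence of the Bessel-order (and one can arrange the new index to grow at least linearly in the Bessel-order by a counting argument, so $O(\text{Bessel-order}^{-\infty}) = O(\text{new index}^{-\infty})$). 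The almost-orthogonality statement transfers identically: from the previous proposition $|\langle v_{n,k}, v_{m,l}\rangle| = O(\min(n,m)^{-\infty})$, and under the reindexing $\min$ of the two Bessel-orders is bounded below by a power of $\min$ of the two new indices, while the Bessel-orders are comparable to the corresponding $\alpha$'s, giving $|\langle v_n, v_k\rangle| = O(\min(n,k)^{-\infty}) = O(\min(\alpha_n,\alpha_k)^{-\infty})$.

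The only genuine bookkeeping obstacle is verifying that the new (single) index grows at least like a fixed power of the Bessel-order, so that $O(\text{Bessel-order}^{-\infty})$ can legitimately be rewritten as $O(\text{index}^{-\infty})$; this follows from a crude counting estimate — using Weyl-type asymptotics for Bessel zeros (or just the bound $\alpha_{n,k} \le C(n + k)$ together with the cutoff), the number of admissible pairs $(m,l)$ with Bessel-order $m \le N$ is $O(N^2)$, so the position in the reordered sequence of any quasimode with Bessel-order $n$ is at most $O(n^2)$, whence $n \gtrsim (\text{index})^{1/2}$ and the $-\infty$ decay rates are preserved. I expect this counting step to be the main (though still routine) obstacle; everything else is a direct transcription of the conclusions of the preceding proposition, using only the comparability $\alpha_{n,k} \asymp n$ valid on the support of the family \eqref{family}.
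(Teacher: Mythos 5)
Your proposal is correct, and it is essentially the argument the paper leaves implicit: Proposition \ref{decay} is stated without proof as a direct consequence of the preceding proposition, the whole content being exactly the reindexing bookkeeping you carry out (on the family \eqref{family} one has $n<\alpha_{n,k}<C_\epsilon n$, each order contributes $O(n)$ admissible zeros, so the new index of a quasimode of Bessel order $n$ is $O(n^2)$ and all $O(\cdot^{-\infty})$ bounds transfer). One small caveat: the parenthetical alternative ``$\alpha_{n,k}\le C(n+k)$ together with the cutoff'' points the wrong way for the count --- to bound the number of admissible $k$ per order you need a \emph{lower} bound such as $\alpha_{n,k}\ge \alpha_{n,1}+(k-1)\pi>n+(k-1)\pi$ (zero spacing of $J_n$ exceeds $\pi$ for $n\ge 1$), or the uniform zero asymptotics you also cite, either of which gives $k=O(n)$ and the $O(N^2)$ count you use.
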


Moreover, as $\epsilon\rightarrow 0$, the counting function of these quasimodes has the following asymptotic bound.

\begin{prop}
\label{quasiasymptotic}
\begin{equation}
\label{quasiasymptoticeqn}
\liminf_{\lambda}\frac{\#\{(n,k):\alpha_{n,k}/r_2 < \lambda,\alpha_{n,k}<\frac{nr_2}{r_1+\epsilon}\}}{\lambda^2}\geq \left(1-\frac{\mu_L(U_t)}{\mu_L(\mathcal{D}_t)}+o(1)\right)\cdot \frac{A(t)}{4\pi}.
\end{equation}
where $A(t)$ is the area of the mushroom $M_t$.
\end{prop}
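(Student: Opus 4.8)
The plan is to count the pairs $(n,k)$ appearing in the quasimode family by translating the Bessel-zero constraints into a count of lattice-type points, and then to identify the limiting proportion with the relative Liouville measure of the integrable region. First I would recall the classical asymptotics for the zeros of Bessel functions: for the $k$-th positive zero $\alpha_{n,k}$ of $J_n$, one has $\alpha_{n,k} \geq \alpha_{n,1}$ and, more usefully, the uniform statement that for fixed ratio $n/\alpha_{n,k} = \cos\beta$ the McMahon/Olver asymptotics give $\alpha_{n,k} \sim n/\cos\beta$ type behaviour with the number of zeros of $J_n$ below a level $x$ (for $x > n$) being asymptotically $\frac{1}{\pi}\left(\sqrt{x^2-n^2} - n\arccos(n/x)\right) + O(1)$. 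This is exactly the phase-integral count, and it is the key input: the number of pairs $(n,k)$ with $\alpha_{n,k} < \lambda r_2$ is, to leading order, $\frac{1}{\pi}\int_0^{\lambda r_2} \left(\sqrt{(\lambda r_2)^2 - n^2} - n\arccos(n/(\lambda r_2))\right)\frac{dn}{\cdot}$... more precisely, summing the per-$n$ counts over $n < \lambda r_2$ yields $\sim \lambda^2 r_2^2 \cdot c$ for an explicit geometric constant $c$, and indeed without the extra constraint this just recovers Weyl's law for the semidisk of radius $r_2$, namely $\frac{\lambda^2}{4\pi}\cdot \frac{\pi r_2^2}{2}$.

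Next I would impose the additional constraint $\alpha_{n,k} < \frac{n r_2}{r_1+\epsilon}$, i.e. $n/\alpha_{n,k} > (r_1+\epsilon)/r_2$. Geometrically, $n/\alpha_{n,k}$ is (up to the radius rescaling) the cosine of the angle of incidence, or equivalently the impact parameter of the corresponding caustic circle: the quasimode $u_{n,k}$ concentrates on the annulus $\{r_2 \cdot (n/\alpha_{n,k}) \leq r \leq r_2\}$, and the constraint says precisely that this caustic radius exceeds $r_1+\epsilon$, so the trajectory never enters the stalk or the disk $B(0,r_1)$ — this is the integrable region $\mathcal{D}_t \setminus U_t$. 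Thus the counting function with the constraint is, asymptotically, $\lambda^2$ times the symplectic volume of the set of unit covectors over the semidisk whose associated angular momentum (impact parameter) exceeds $(r_1+\epsilon)/r_2$. I would compute this volume directly: in action-angle-like coordinates on $S^*S$ for the disk billiard, the fraction of phase space with impact parameter $\geq p$ is a standard integral, and one checks that as $\epsilon \to 0$ it converges to $\mu_L(\mathcal{D}_t \setminus U_t)/\mu_L(\mathcal{D}_t)$ restricted appropriately — indeed $U_t$ is exactly the set of trajectories that do enter $R_t \cup \overline{B(0,r_1)}$, whose complement in $\mathcal{D}_t$ has the trajectories confined to the semi-annulus with caustic radius $> r_1$. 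Multiplying the phase-space fraction by the total Weyl constant $\frac{A(t)}{4\pi}$ for the full mushroom $M_t$ (not just the cap) gives the claimed right-hand side $\left(1 - \frac{\mu_L(U_t)}{\mu_L(\mathcal{D}_t)} + o(1)\right)\frac{A(t)}{4\pi}$.

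The main obstacle is bookkeeping the two distinct normalizations and making the $\liminf$ with the $o(1)$ (as $\epsilon \to 0$) rigorous rather than heuristic. Concretely: (i) the Bessel-zero count is only asymptotic with an $O(1)$, uniform-in-$n$ error, so I must sum $\sim \lambda r_2$ such errors and confirm the total is $o(\lambda^2)$ — this is fine since the error is $O(1)$ per $n$ and there are $O(\lambda)$ values of $n$, giving $O(\lambda) = o(\lambda^2)$, but the uniformity of the Bessel asymptotics near the turning point $n \approx \alpha_{n,k}$ (the transition/Airy regime) needs the uniform Olver expansion, not just McMahon; (ii) the phase-space volume computation must be matched carefully to the definition of $U_t$ given in the text, in particular that $U_t$ captures $\mu_L$-almost all trajectories entering $R_t \cup \overline{B(0,r_1)}$, so that the complementary caustic-radius-$>r_1$ set really does have relative measure $1 - \mu_L(U_t)/\mu_L(\mathcal{D}_t)$; and (iii) one only needs a lower bound (hence $\liminf$), which is a genuine simplification — I can afford to discard the $n$ near the turning-point regime and the boundary-layer $n$ close to $\lambda r_2 (r_1+\epsilon)/r_2$, count only the "bulk" lattice points where the Bessel asymptotics are cleanest, and still recover the stated inequality in the limit $\epsilon \to 0$. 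I would structure the proof so that the $\epsilon$-dependence enters only through the phase-space volume integral, whose continuity in $\epsilon$ at $\epsilon = 0$ yields the $o(1)$.
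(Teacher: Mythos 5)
Your proposal is correct and follows essentially the same route as the paper: the phase-integral count $\frac{1}{\pi}\bigl(\sqrt{x^2-n^2}-n\arccos(n/x)\bigr)$ for the number of zeros of $J_n$ below $x$ is exactly the integrated form of the paper's uniform Olver asymptotics $\alpha_{n,k}=nz(n^{-2/3}a_k)+o(n)$ combined with the Airy-zero asymptotics, and the remaining steps (summing over $n$, converting to an integral, matching against the explicitly computed $\mu_L(\mathcal{D}_t)-\mu_L(U_t)$ over the semi-annulus, and letting $\epsilon\to 0$ to produce the $o(1)$) coincide with the paper's argument. Your observation that only a lower bound is needed, so the turning-point and boundary-layer ranges of $n$ can be discarded, is precisely how the paper handles uniformity, via its $\epsilon$-slack in $\hat{C_\epsilon}$ and $D_\epsilon(\lambda,n)$.
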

\begin{proof}
To simplify our calculations, we scale $\mu_L$ so that $\mu_L(M_t)=2\pi A(t)$.

From an arbitrary point $(r,\theta)$ in the annulus, the trajectories that never enter the stalk have measure $(2\pi-4\sin^{-1}(r_1/r))$ out of the full measure $2\pi$ of the unit cosphere at that point.

Hence

\begin{eqnarray*}
\mu_L(\mathcal{D}_t)-\mu_L(U_t)&=&\int_0^\pi\int_{r_1}^{r_2} r(2\pi-4\sin^{-1}(r_1/r))\, dr \, d\theta\\
&=& \pi^2(r_2^2-r_1^2)-4\pi\int_{r_1}^{r_2} r\sin^{-1}(r_1/r)\, dr \\
&=& \pi^2r_2^2-2\pi r_1^2\sqrt{C^2-1}-2\pi r_2^2\sin^{-1}(C^{-1})
\end{eqnarray*}
where $C=r_2/r_1$.

This implies that 
\begin{equation}
\left(1-\frac{\mu_L(U_t)}{\mu_L(\mathcal{D}_t)}\right)\cdot \frac{A(t) \lambda^2}{4\pi}=\frac{r_2^2}{8}\left(1-\frac{2}{\pi C^2}\sqrt{C^2-1}-\frac{2}{\pi}\sin^{-1}(C^{-1})\right)\lambda^2.
\end{equation}

To estimate the left hand side of \eqref{quasiasymptoticeqn}, we use the leading order uniform asymptotics for Bessel function zeros found in \cite{absteg}. 

As $n\rightarrow \infty$, we uniformly have 
\begin{equation}
\alpha_{n,k}=nz(n^{-2/3}a_k)+o(n)
\end{equation}

where $z:(-\infty,0]\rightarrow [1,\infty)$ is defined implicitly by 

\begin{equation}\frac{2}{3}(-\zeta)^{3/2}=\sqrt{z(\zeta)^2-1}-\sec^{-1}(z(\zeta))\end{equation}

and the $a_k$ are the negative zeros of the Airy function, which have asymptotic
\begin{equation}
a_k=-\left(\frac{3\pi k}{2}\right)^{2/3}+O(k^{-1/3}).
\end{equation} 

We now write $C_\epsilon=r_2/(r_1+\epsilon)$.

We count the left hand side of \eqref{quasiasymptoticeqn} by separating into two regimes based on the size of $n/\lambda$. In each of these two regimes, a single one of the inequalities defining our family \eqref{family} implies the other. More precisely, we have
\begin{eqnarray}\nonumber & &|\{(n,k):\alpha_{n,k}/r_2\leq\lambda,\alpha_{n,k}\leq C_\epsilon n\}|\\
\nonumber &= &|\{(n,k):n\leq r_2\lambda/C_\epsilon,\alpha_{n,k}\leq C_\epsilon n\}|\\ 
\nonumber &+&|\{(n,k):r_2\lambda/C_\epsilon < n \leq r_2\lambda,\alpha_{n,k}/r_2 \leq \lambda\}|\\
\label{regime} &=:&N_A(\lambda;\epsilon)+N_B(\lambda;\epsilon).\end{eqnarray}

For $n,k$ sufficiently large, a sufficient condition for being in regime $A$ of \eqref{regime} is to have
\begin{equation}
z(n^{-2/3}a_k)\leq C_\epsilon-\epsilon=\hat{C_\epsilon}
\end{equation}
and
\begin{equation}
n\leq r_2\lambda/C_\epsilon.
\end{equation}

Also, from the Airy function asymptotics we have
\begin{equation}
\frac{2}{3}(-n^{-2/3}a_k)^{3/2}=\frac{2}{3n}\left(\left(\frac{3\pi k}{2}\right)^{2/3}+O(k^{-1/3})\right)^{3/2}=\frac{\pi k}{n}+O(n^{-1}).
\end{equation}

Hence from the monotonicity of $z$, for all $n,k$ sufficiently large with $n\leq r_2\lambda/C_\epsilon$, a sufficient condition for being in regime $A$ of \eqref{regime} is
\begin{equation}
k \leq \frac{\sqrt{\hat{C_\epsilon}^2-1}-\sec^{-1}(\hat{C_\epsilon})-\epsilon}{\pi}n.
\end{equation}

Noting that the contribution from small $n$ and $k$ is finite, we can conclude that
\begin{eqnarray*}
\liminf_\lambda\frac{N_A(\lambda^2)}{\lambda^2}&\geq &\liminf_\lambda (\frac{1}{\lambda^2}\sum_{n\leq r_2\lambda/C_\epsilon} n)\cdot \frac{\sqrt{\hat{C_\epsilon}^2-1}-\sec^{-1}(\hat{C_\epsilon})-\epsilon}{\pi}\\
&=& \frac{r_2^2(\sqrt{\hat{C_\epsilon}^2-1}-\sec^{-1}(\hat{C_\epsilon})-\epsilon)}{2C_\epsilon^2\pi}.
\end{eqnarray*}

Similarly, for sufficiently large $n,k$, a sufficient condition for being in regime $B$ of \eqref{regime} is to have
\begin{equation}
z(n^{-2/3}a_k)\leq \lambda r_2/n -\epsilon=D_\epsilon(\lambda,n).
\end{equation}
and
\begin{equation}
r_2\lambda/C_\epsilon < n \leq r_2\lambda.
\end{equation}

Hence, for all $n,k$ sufficiently large with $r_2\lambda/C_\epsilon < n \leq r_2\lambda/(1+\epsilon)$, a sufficient condition for being in regime $B$ of \eqref{regime} is
\begin{equation}
k\leq \frac{\sqrt{D_\epsilon(\lambda,n)^2-1}-\sec^{-1}(D_\epsilon(\lambda,n))-\epsilon}{\pi}n.
\end{equation}

Again throwing away a finite number of small pairs, we obtain
\begin{eqnarray*}
& & \liminf_\lambda \frac{N_B(\lambda^2)}{\lambda^2}\\
&\geq & \liminf_\lambda \frac{1}{\pi\lambda^2} \sum_{\frac{r_2\lambda}{C_\epsilon} < n \leq \frac{r_2\lambda}{1+\epsilon}}\left(n\sqrt{D_\epsilon(\lambda,n)^2-1}-  n\sec^{-1}(D_\epsilon(\lambda,n)) -\epsilon n\right)\\
&=& \liminf_\lambda\frac{1}{\pi\lambda^2}\int_{\frac{r_2\lambda}{C_\epsilon}}^{\frac{r_2\lambda}{1+\epsilon}} \left(t\sqrt{D_\epsilon(\lambda,t)^2-1} -  t\sec^{-1}(D_\epsilon(\lambda,t))-\epsilon t\right)\, dt.
\end{eqnarray*}
Each of the three summands in the integrand has elementary primitive, so we can explicitly compute this quantity.

Noting that $C_\epsilon,\hat{C_\epsilon}\rightarrow C$, we compute
\begin{eqnarray*}
& &\lim_{\epsilon\rightarrow 0}\liminf_\lambda \frac{N_A(\lambda^2)+N_B(\lambda^2)}{\lambda^2}\\
&\geq & \left(\frac{r_1^2\sqrt{C^2-1}}{2\pi}-\frac{r_1^2}{2\pi}(\frac{\pi}{2}-\sin^{-1}(C^{-1}))\right)\\
&+& \frac{1}{\pi \lambda^2}\int_{r_1\lambda}^{r_2\lambda} \sqrt{\lambda^2 r_2^2-t^2}\, dt-\frac{1}{\pi\lambda^2}\int_{r_1\lambda}^{r_2\lambda}t\sec^{-1}(\frac{\lambda r_2}{t})\, dt\\
&=& \frac{r_2^2}{8}\left(1-\frac{2}{\pi C^2}\sqrt{C^2-1}-\frac{2}{\pi}\sin^{-1}(C^{-1})\right).\\
\end{eqnarray*}
as required.

\end{proof}

\section{Spectral Theory}

We next establish the following key spectral theoretic result.

\begin{prop}
\label{spectral}
Let $\mathcal{H}$ be a Hilbert space. Suppose $T\in\mathcal{L}(\mathcal{H})$ has a complete orthonormal system of eigenvectors $(u_i,E_i)_{i\in\N}$ with the sequence $(E_i)$ non-negative and increasing without bound. Suppose further that we have a family of normalised quasimodes $(v_i,E_i')_{i=1}^n$ with 
\begin{equation}\label{epsilonone}\|(T-E_i')v_i\| < \epsilon_1\end{equation}
and
\begin{equation}\label{epsilontwo}|\langle v_i,v_j\rangle| < \epsilon_2 \quad \textrm{for }i\neq j\end{equation}
for some positive $\epsilon_1,\epsilon_2>0.$

We write 
\begin{equation}
V=\textrm{Span}\{v_i\}_{i=1}^n
\end{equation}
and
\begin{equation}
U=\textrm{Span}\{u_j:E_j\in\bigcup_{i=1}^n [E_i'-c,E_i'+c]\}.
\end{equation}

We denote the orthogonal projection onto a subspace $S\subseteq \mathcal{H}$ by $\pi_S$.

If for some $c>0$ and some $0<\epsilon,\delta<1/2$ we have

\begin{equation}
m=\#\{j\in\N:E_j\in\bigcup_{i=1}^n [E_i'-c,E_i'+c] \}<n(1+\epsilon)
\end{equation}
and
\begin{equation}
\label{errormatrixbound}
\frac{\epsilon_1^2}{c^2}+\epsilon_2 < \frac{\delta}{n}
\end{equation}

then at least $n(1-\sqrt{\epsilon})$ of the corresponding eigenvectors $u_i$ satisfy

\begin{equation}
\label{eigenestimate}
\|u_i-\pi_V(u_i)\|<\epsilon^{1/4}+2\delta^{3/2}. 
\end{equation}

\end{prop}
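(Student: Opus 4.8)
The plan is to work with the $n\times n$ ``quasimode Gram-plus-error'' data and show that most quasimodes must correlate strongly with the low-lying eigenspace $U$, then dualize. First I would expand each $v_i$ in the orthonormal eigenbasis, $v_i=\sum_j c_{ij}u_j$, and split $v_i = \pi_U v_i + w_i$ where $w_i$ is the part supported on eigenvalues $E_j\notin\bigcup_k[E_k'-c,E_k'+c]$. The quasimode equation \eqref{epsilonone} gives $\sum_j (E_j-E_i')^2|c_{ij}|^2<\epsilon_1^2$, and since on the support of $w_i$ we have $|E_j-E_i'|\ge c$, this forces $\|w_i\|^2 < \epsilon_1^2/c^2$. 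Hence $\|v_i-\pi_U v_i\|^2<\epsilon_1^2/c^2$ for \emph{every} $i$, i.e. all quasimodes are already almost inside the $m$-dimensional space $U$.

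Next I would count dimensions. We have $n$ almost-orthonormal vectors $v_1,\dots,v_n$ whose projections $\pi_U v_i$ live in the $m$-dimensional space $U$ with $m<n(1+\epsilon)$. Consider the $n\times n$ Gram matrix $G$ of the $\pi_U v_i$; by \eqref{epsilontwo} and the bound on $\|w_i\|$, $G = I + R$ where $\|R\|_{\mathrm{HS}}^2 \le \sum_{i\ne j}(\text{small})^2 + \sum_i(\epsilon_1^2/c^2)^2$, and more usefully the trace identity gives $\operatorname{tr} G = \sum_i \|\pi_U v_i\|^2 \ge n(1-\epsilon_1^2/c^2)$. On the other hand $G$ has rank at most $m$, so its eigenvalues $\lambda_1\ge\cdots\ge\lambda_n$ vanish for index $>m$ and $\sum_{k\le m}\lambda_k = \operatorname{tr} G$. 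I would then estimate how far $G$ is from a projection: using \eqref{errormatrixbound} to control $\|G - (I\oplus 0)\|$ type quantities (the off-diagonal mass is at most $n(n-1)\epsilon_2 < \delta$-ish after the normalization, and the $w_i$-defect contributes $\epsilon_1^2/c^2$ per diagonal), one sees $\sum_k (\lambda_k - \mathbf 1_{k\le?})^2$ is small. The point is that the ``extra'' dimension budget is only $m-n < n\epsilon$, so at most $O(\sqrt\epsilon n)$ of the $\lambda_k$ can be far from $1$; concretely, the number of $i$ for which the $i$-th quasimode fails to be captured to accuracy $\sim\epsilon^{1/4}$ by the span of the others' images is $O(\sqrt\epsilon\, n)$.

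The final step is the duality/transpose argument, which is where the conclusion about \emph{eigenvectors} $u_i$ (rather than quasimodes) comes from. Having shown that $V' := \operatorname{Span}\{\pi_U v_i\}$ is ``almost all of'' a subspace of $U$ of dimension close to $n$ — precisely, that the orthogonal complement of $V'$ inside $U$ has dimension at most $m-n+O(\sqrt\epsilon n) \le O(\sqrt\epsilon n)$, and that $V$ itself is $\epsilon_1/c$-close to $V'$ — I would conclude that $V$ is close to an $(n-O(\sqrt\epsilon n))$-dimensional subspace of $U$. Counting again: $U$ has dimension $m = \#\{j: E_j\in\bigcup[E_i'-c,E_i'+c]\}$, and $V\cap$ (large subspace) has codimension $O(\sqrt\epsilon n)$ in $U$, so the number of basis eigenvectors $u_j$ of $U$ that are \emph{not} well-approximated by their projection onto $V$ is at most $m - \dim(\text{captured part}) = O(\sqrt\epsilon\, n)$, i.e.\ at least $n(1-\sqrt\epsilon)$ of them satisfy $\|u_j - \pi_V u_j\|$ small; tracking constants through the $\epsilon_1^2/c^2 + \epsilon_2 < \delta/n$ hypothesis yields the stated bound $\epsilon^{1/4} + 2\delta^{3/2}$.

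The main obstacle I anticipate is making the ``transpose'' counting genuinely quantitative with the specific exponents $\epsilon^{1/4}$ and $2\delta^{3/2}$: one has to convert an averaged/trace statement (most quasimodes are almost in $U$, total defect $\sim n\epsilon_1^2/c^2 + n^2\epsilon_2 < \delta$) into a statement that holds for a definite \emph{number} ($\ge n(1-\sqrt\epsilon)$) of \emph{individual} eigenvectors with a \emph{worst-case} error bound. This forces a Chebyshev/pigeonhole step — the $\sqrt\epsilon$ loss is exactly the price of going from $L^2$-average to ``all but an $\sqrt\epsilon$-fraction'' — together with careful bookkeeping of how the near-orthonormality defect $\epsilon_2$ and the quasimode defect $\epsilon_1/c$ propagate when inverting the Gram matrix (one needs $G$ invertible on the relevant subspace, which is where $\delta<1/2$ and \eqref{errormatrixbound} are used to keep $\|R\|<1$). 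Everything else is linear algebra that I would not grind through here.
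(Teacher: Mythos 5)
Your plan is essentially the paper's proof: project the quasimodes onto $U$ with per-vector defect $\epsilon_1^2/c^2<\delta/n$, observe that the Gram matrix of the projections is within $\delta<1/2$ of the identity (hence invertible, so their span $W\subseteq U$ has dimension $n$ and $m\geq n$), exploit the budget $m-n<n\epsilon$ to show most eigenvectors nearly lie in $W$, and then pass from $W$ back to $V$ using the near-identity orthonormalization ($\|M^{-1/2}-I\|<2\delta$), which produces the $2\delta^{3/2}$ term. One correction to how you phrase the middle step: the count of bad eigenvectors does not follow from bare codimension bookkeeping, nor from Chebyshev applied to the quasimode-defect trace $n\epsilon_1^2/c^2+n^2\epsilon_2<\delta$ (that quantity lives at scale $\delta$, not $\epsilon$); the paper instead uses the identity $\sum_{i=1}^m\|\pi_{W^{\perp}}(u_i)\|^2=m-n<n\epsilon$ for the orthonormal basis $u_1,\dots,u_m$ of $U$, and Chebyshev on this sum is what yields at least $n(1-\sqrt{\epsilon})$ eigenvectors with $\|\pi_{W^{\perp}}(u_i)\|^2<\sqrt{\epsilon}$, i.e.\ the $\epsilon^{1/4}$ term. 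With that substitution (which your sketch already contains implicitly, since you correctly identify $m-n<n\epsilon$ as the relevant budget), your argument matches the paper's, including the spurious role of the ``rank at most $m$'' remark, which is vacuous once invertibility forces $m\geq n$.
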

\begin{proof}
The idea behind the proof of the estimate \eqref{eigenestimate} consists of several successive approximations. 

We first show that the projections $\pi_U(v_i)$ are almost orthogonal and can be transformed into an orthonormal basis $(w_i)_{i=1}^n$ of their span by a matrix $A$ that is approximately the identity. 

We then show that excluding some exceptional eigenvectors, the remaining eigenvectors are necessarily rather close to the space $W$. This implies that the non-exceptional eigenvectors can be well approximated by their projections, which leads us to conclude $u\approx\pi_W(u)=Bw=BA\pi_U(v)\approx BAv$ for some matrix $B$.\\

To begin, we reindex the eigenpairs $(u_i,E_i)$ so that $E_j\in \cup_{i=1}^n [E_i'-c,E_i'+c]$ precisely for $j=1,2,\ldots,m.$

The assumptions \eqref{epsilonone} and \eqref{epsilontwo} then imply

\begin{eqnarray*}
\|(T-E_i')\sum_{j\in\mathbb{N}}\langle v_i,u_j\rangle u_j\|^2 & < & \epsilon_1^2\\ 
\Rightarrow \sum_{j=m+1}^\infty |E_j-E_i'|^2|\langle v_i,u_j\rangle|^2 &<& \epsilon_1^2\\
\Rightarrow \sum_{j=m+1}^\infty |\langle v_i,u_j\rangle|^2 &<& \frac{\epsilon_1^2}{c^2}\\
\Rightarrow \|\pi_U(v_i)\|^2 &>& 1-\frac{\epsilon_1^2}{c^2}.
\end{eqnarray*}
and
\begin{eqnarray*}
|\langle\pi_U(v_i),\pi_U(v_j)\rangle|&\leq& |\langle v_i,v_j\rangle|+|\langle\pi_{U^\perp}(v_i),\pi_{U^\perp}(v_j)\rangle|\\
& < & \epsilon_2 + \sqrt{(1-\|\pi_U(v_i)\|^2)(1-\|\pi_U(v_j)\|^2)}\\
& < & \epsilon_2+\frac{\epsilon_1^2}{c^2}
\end{eqnarray*}
for $i\neq j$.

Together with \eqref{errormatrixbound} we obtain
\begin{equation}
\label{ineqone}
\|\pi_U(v_i)\|^2 > 1-\frac{\delta}{n}
\end{equation}
and
\begin{equation}
\label{ineqtwo}
|\langle \pi_U(v_i),\pi_U(v_j)\rangle|< \frac{\delta}{n}
\end{equation}
for $i\neq j$.

The matrix $M$ with entries $M_{ij}=\langle\pi_U(v_i),\pi_U(v_j)\rangle$ satisfies

\begin{equation}
\label{approxid}
\|M-I\|_{HS}=:\|E\|_{HS}<\delta<1/2.
\end{equation}

Note that if the collection  $\{\pi_U(v_i)\}$ were linearly dependent, then the matrix $M$ would be singular. The estimate \eqref{approxid} precludes this possibility, because we can invert $M=I-(I-M)$ as a Neumann series. In particular, this implies that $m\geq n$.\\

We now write $W=\textrm{Span}\{\pi_U(v_i)\}_{i=1}^n$ and suppose that $(w_i)_{i=1}^n$ is an orthonormal basis for $W$ which can be given by the transformation $w=A\pi_U(v)$, where $A$ is an $n\times n$ real matrix that acts on the Hilbert space $\mathcal{H}^n$ via matrix multiplication.

Expanding out the matrix equation $\langle w_i,w_j\rangle=\delta_{ij}$ we obtain
\begin{equation}
AMA^*=\sum_{k=1}^n\sum_{l=1}^na_{ik}\overline{a_{jl}}\langle \pi_U(v_k),\pi_U(v_l)\rangle=I
\end{equation}
which has a solution
\begin{equation}
A=M^{-1/2}=(I+E)^{-1/2}=\sum_{k=0}^\infty \binom{-1/2}{k}E^k=\sum_{k=0}^\infty (-1)^k \binom{2k}{k}4^{-k}E^k.
\end{equation}

From \eqref{approxid}, we then deduce
\begin{equation}
\|A-I\|_{HS}\leq \sum_{k=1}^\infty \|E\|^k = \|E\|(1-\|E\|)^{-1}<2\delta.
\end{equation}

In the case $m=n$, that is when $W=U$, we can find a unitary matrix $B$ with $Bw=u$.

We now assume $m>n$, recalling that the assumptions of the proposition imply that this excess is small as a proportion of $n$.

We have
\begin{eqnarray*}
\sum_{i=1}^m \|\pi_{W^\perp}(u_i)\|^2=m-\sum_{i=1}^m \|\pi_{W}(u_i)\|^2=m-\sum_{i=1}^m\sum_{j=1}^n |\langle u_i,w_j\rangle|^2=m-n<n\epsilon
\end{eqnarray*}

which implies that
\begin{equation}
\#\{i:\|\pi_{W^\perp}(u_i)\|^2\geq\sqrt{\epsilon}\}<n\sqrt{\epsilon}
\end{equation}
and consequently
\begin{equation}
\label{closest}
\#\{i:\|\pi_{W}(u_i)\|^2>1-\sqrt{\epsilon}\}\geq m-n\sqrt{\epsilon}>n(1-\sqrt{\epsilon}).
\end{equation}

We again re-index the eigenpairs for convenience, so that the first $n'=\lceil n(1-\sqrt{\epsilon}) \rceil$ eigenvectors $u_i$ satisfy the estimate in \eqref{closest}.

In this case, we define the $n' \times n$ matrix $B$ to have entries
\begin{equation}
B_{ij}=\langle\pi_W(u_i),w_j\rangle
\end{equation}
and the vector $u\in\mathcal{H}^{n'}$ by
\begin{equation}
(u_i)_{i=1}^{n'}.
\end{equation}

We then have
\begin{equation}
\pi_W(u)=Bw
\end{equation}
and the $i$-th row $B_i$ of $B$ has $\ell^2$ norm trivially bounded by $1$.

This leaves us with
\begin{equation}
u=(u-\pi_W(u))+BA\pi_U(v)=(u-\pi_W(u))+BAv+BA(\pi_U(v)-v).
\end{equation}

which implies
\begin{eqnarray*}
\|(u-BAv)_i\|_{\mathcal{H}}&\leq& \|u_i-\pi_W(u_i)\|_{\mathcal{H}}+\|B_i\|_{\ell^2}\|A\|_{HS}\|\pi_U(v)-v\|_{\mathcal{H}^n}\\
&<& \epsilon^{1/4}+\cdot (2\delta) \cdot \sqrt{\delta}\\
&<& \epsilon^{1/4}+2\delta^{3/2}.
\end{eqnarray*}

This estimate shows that each $u_i$ has distance less than $\epsilon^{1/4}+2\delta^{3/2}$ to some element of $V$. 

Consequently 
\begin{equation} 
\|u_i-\pi_V(u_i)\| < \epsilon^{1/4}+2\delta^{3/2} 
\end{equation}
as required.
\end{proof}

Our strategy to prove the main theorem is to control the number of eigenvalues in most clusters formed by finite unions of overlapping intervals of the form ${[\alpha_i^2-c,\alpha_i^2+c]}$, and then repeatedly employ Proposition \ref{spectral} to establish the existence of a large density subsequence of these eigenfunctions that localises in the semidisk.

\section{Results on Eigenvalue Flow}

Central to the argument is the analysis of how eigenvalues flow as we vary $t$. 

Weyl's law provides us with the asymptotic
\begin{equation}
N_t(\lambda^2)\sim \frac{\lambda^2|M_t|}{4\pi}
\end{equation}

where $N_t$ is the counting function of the Dirichlet eigenvalues on $M_t$.

To obtain a more precise statement about the change of individual eigenvalues, we employ an interior version of the Hadamard variational formula and Theorem \ref{galkowski}.

In order to make use of Theorem \ref{galkowski}, we choose $\phi(y)\in \mathcal{C}^\infty_c (\mathbb{\R})$ non-negative, supported near $y=-1/2$, and with integral $1$, and we define the family of metrics
\begin{equation}g_t=dx^2+(1+(t-1)\phi)^2 dy^2\end{equation}

on $M_1$.

This metric induces a natural isometry $I_t: (M_1,g_t)\rightarrow (M_t,g_1)$.

If we define $R_t=(1+(t-1)\phi)^{-1/2}$, then we have the following result from Proposition 7 of the appendix of \cite{hassell}.

\begin{prop}
\label{hadamard}
Let $u(t)$ be an $L^2$-normalised real Dirichlet eigenfunction of $\Delta$ on $M_t$ with corresponding eigenvalue $E(t)$. We then have
\begin{equation}
\dot{E}(t)=-\frac{1}{2}\langle Qu(t),u(t)\rangle
\end{equation}

where the operator $Q$ is given by
\begin{equation}
Q=-4\partial_y\phi_t\partial_y + [\partial_y,[\partial_y,\phi_t]]=\phi_t''-4(\phi_t'\partial_y+\phi_t\partial_y^2)
\end{equation}

on $M_t$.

Here, $\phi_t:M_t\rightarrow \R$ is given by:
\begin{equation}
\phi_t=(\phi R_t^2)\circ I_t^{-1}.
\end{equation}
\end{prop}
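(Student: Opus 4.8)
The plan is to transport the moving-domain eigenvalue problem to the fixed domain $M_1$, where the eigenvalue branches depend real-analytically on the parameter, and then differentiate. Observe that $\Phi_t\colon L^2(M_t)\to L^2(M_1,dx\,dy)$, $\Phi_t u = R_t^{-1}(u\circ I_t)$, is unitary: $I_t\colon(M_1,g_t)\to(M_t,g_1)$ is an isometry, and $R_t^{-2}=1+(t-1)\phi$ is exactly the density of $\mathrm{vol}_{g_t}$ with respect to $dx\,dy$ on $M_1$; moreover $\Phi_t$ carries $H^2(M_t)\cap H_0^1(M_t)$ onto the fixed space $H^2(M_1)\cap H_0^1(M_1)$. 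Conjugating, the Dirichlet Laplacian on $M_t$ becomes the self-adjoint operator $P(t):=\Phi_t(-\Delta)\Phi_t^{-1}=R_t^{-1}(-\Delta_{g_t})R_t$ on $L^2(M_1,dx\,dy)$, and expanding $-\Delta_{g_t}$ in coordinates gives
\[
P(t)=-\partial_x^2-(1+(t-1)\phi)^{-1/2}\,\partial_y\,(1+(t-1)\phi)^{-1}\,\partial_y\,(1+(t-1)\phi)^{-1/2}.
\]
Its spectrum is $\{E_j(t)\}$, and if $u(t)$ is an $L^2$-normalised eigenfunction on $M_t$ then $w(t):=\Phi_t u(t)$ is a normalised eigenvector of $P(t)$ with the same eigenvalue.

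The coefficients of $P(t)$ are real-analytic in $t$ (on any interval where $1+(t-1)\phi>0$) with $t\mapsto P(t)\psi$ real-analytic for each $\psi$ in the fixed domain, so $t\mapsto P(t)$ is an analytic family of type (A); organising the eigenvalues and a corresponding orthonormal eigenbasis into piecewise real-analytic branches exactly as in \cite{hassell}, first-order perturbation theory then yields $\dot E(t)=\langle\dot P(t)\,w(t),w(t)\rangle_{L^2(M_1,dx\,dy)}$. I would next differentiate the coordinate formula above in $t$: the $\partial_x^2$ term is constant, and $\dot P(t)$ emerges as a second-order differential operator in $y$ whose coefficients are smooth multiples of $\phi,\phi',\phi''$ --- in particular supported in $\{\phi\neq 0\}$, which lies in the interior of the stalk $R_t$, away from $\partial M_1$. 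This support statement is the real point of the proposition: the infinitesimal eigenvalue variation is controlled by an operator living strictly inside the stalk.

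It then remains to rewrite $\langle\dot P(t)w(t),w(t)\rangle$ on $L^2(M_1,dx\,dy)$ as $-\tfrac12\langle Qu(t),u(t)\rangle$ on $L^2(M_t)$. Substituting $w=R_t^{-1}(u\circ I_t)$, using the chain rule for $\partial_y$ under $I_t$ (which produces a factor $1+(t-1)\phi$) together with the Jacobian $R_t^{2}$ of the change of variables $I_t^{-1}$, each term of $\dot P(t)$ turns into an integral over $M_t$ paired against $u$; integrating by parts in $y$ --- with no boundary contributions, precisely because $\phi_t$ is supported in the interior of $R_t$ --- and collecting the surviving terms assembles the operator $Q=-4\partial_y\phi_t\partial_y+[\partial_y,[\partial_y,\phi_t]]$ with $\phi_t=(\phi R_t^{2})\circ I_t^{-1}$. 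I expect this last step to be the main obstacle: the change-of-variables factors and the several integrations by parts must be arranged so that the surviving terms combine into precisely this $Q$, and one must confirm that every boundary term really vanishes (which is where the compact support of $\phi$ strictly inside the stalk enters); everything before it is soft functional analysis. As a check, the resulting expression should agree with the boundary form \eqref{classhad}, which for the mushroom reads $\dot E(t)=-\int_{-r_1}^{r_1}(\partial_y u(x,-t))^2\,dx$; this is consistent because $\phi_t$ has integral $1$ across the stalk and, by the eigenequation together with the vanishing of $u$ on the vertical sides of $R_t$, the quantity $\int_{-r_1}^{r_1}\big((\partial_x u)^2-(\partial_y u)^2-Eu^2\big)\,dx$ is independent of $y$ throughout $R_t$.
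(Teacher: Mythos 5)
Your route is sound, but be aware that the paper does not prove this proposition at all: it is quoted from Proposition 7 of the appendix of \cite{hassell}, and the argument you outline (the unitary transplantation $\Phi_t u=R_t^{-1}(u\circ I_t)$, the type-(A) analytic family $P(t)=-\partial_x^2-h^{-1/2}\partial_y h^{-1}\partial_y h^{-1/2}$ with $h=1+(t-1)\phi$, piecewise-analytic eigenbranches and Feynman--Hellmann) is exactly the argument of that appendix, so in approach you match the cited source. The problem is that your proposal stops precisely where the content of the proposition lies: the computation of $\dot P(t)$ and its transport back to $M_t$, which is what produces the constants $-\tfrac12$ and $4$ and the weight $\phi_t=(\phi R_t^2)\circ I_t^{-1}$, is deferred as ``the main obstacle'' and never carried out. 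It is in fact short: writing $v=u\circ I_t$ and $w=h^{1/2}v$, pairing the three terms of $\dot P(t)$ with $w$ and integrating by parts in $y$ (no boundary terms, since $\phi$ is supported in the interior of the stalk) gives
\begin{equation*}
\langle\dot P(t)w,w\rangle=\int_{M_1}\phi\,h^{-2}\bigl(v\,v_{yy}-v_y^2\bigr)\,dx\,dy-\int_{M_1}\phi\,h'h^{-3}\,v_y v\,dx\,dy,
\end{equation*}
and after substituting $v_y=h\,(\partial_y u)\circ I_t$, $v_{yy}=h^{2}(\partial_y^2u)\circ I_t+h'(\partial_y u)\circ I_t$ and $dy=h^{-1}dy_t$, the $h'$ terms cancel, leaving the exact identity $\dot E(t)=\int_{M_t}\phi_t\bigl(u\,\partial_y^2u-(\partial_y u)^2\bigr)\,dA$.

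Moreover, the cross-check you offer in place of this computation is asserted rather than performed, and if you perform it, it does not close for the formula as printed. Your Rellich observation is correct: $\int_{-r_1}^{r_1}\bigl((\partial_xu)^2-(\partial_yu)^2-Eu^2\bigr)\,dx$ is constant in $y$ through the stalk and equals $\dot E$ by \eqref{classhad}; multiplying by $\phi_t$, integrating in $y$ with $\int\phi_t\,dy=1$ and using the eigenequation gives again $\dot E=\int_{M_t}\phi_t(u\,u_{yy}-u_y^2)\,dA=\tfrac12\int\phi_t''u^2-2\int\phi_t(\partial_yu)^2$. By contrast, $-\tfrac12\langle Qu,u\rangle$ with the printed $Q=-4\partial_y\phi_t\partial_y+[\partial_y,[\partial_y,\phi_t]]$ equals $-\tfrac12\int\phi_t''u^2-2\int\phi_t(\partial_yu)^2$; the two differ by $\int\phi_t''u^2$, which is not identically zero (test separable modes on a pure rectangle). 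So the double-commutator term should enter with the opposite sign, or at least the convention in Proposition 7 of \cite{hassell} needs to be checked; the discrepancy is zeroth order and harmless for everything done later in the paper (only the principal symbol $4\phi_t\xi_2^2$ and the prefactor $-\tfrac12$ are used in Proposition \ref{flowspeed1}), but your verification step, done honestly, would have to detect this rather than confirm the statement, and without the completed computation your proposal does not yet prove the proposition.
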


We now cut $Q$ off away from the vertical sides of the stalk so that we can use the interior equidistribution result Theorem \ref{galkowski} to control the quantity $E_k^{-1}\dot{E_k}$.

We do this by defining
\begin{equation}Q_\delta=\chi_\delta Q \end{equation}

where $\chi_\delta\in\mathcal{C}^\infty$ satisfies
\begin{equation}
\chi_\delta(x)=\begin{cases}
0 &\mbox{for }x\in [-r_1,-r_1+\delta]\cup [r_1-\delta, r_1]\\
1 &\mbox{for }x \in [-r_1+2\delta,r_1-2\delta].
\end{cases}
\end{equation}

\begin{prop}
\label{heatkernelprop}
For any $\epsilon > 0$ and any $t\in (0,2]$, there exists $\delta>0$ such that
\begin{equation}
\label{heatkernel}
|E_{n_k}^{-1}\langle (Q_\delta-Q)u_{n_k}(t),u_{n_k}(t)\rangle|<\epsilon
\end{equation}
for all $k$, where $(n_k)$ is a $t$-dependent subsequence of the positive integers with lower density bounded below by $1-\epsilon$.
\end{prop}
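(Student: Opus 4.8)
The plan is to estimate the quantity $E_{n_k}^{-1}\langle(Q_\delta - Q)u_{n_k}(t), u_{n_k}(t)\rangle$ by exploiting that $Q - Q_\delta = (1-\chi_\delta)Q$ is a second-order differential operator supported in the two thin vertical strips $[-r_1,-r_1+2\delta]\times[-t,0]$ and $[r_1-2\delta,r_1]\times[-t,0]$ adjacent to the vertical sides of the stalk. Since $Q$ has the form $\phi_t'' - 4\phi_t'\partial_y - 4\phi_t\partial_y^2$, and $\phi_t$ is a fixed smooth bounded function, controlling $\langle(Q-Q_\delta)u,u\rangle$ reduces to controlling $\|u_{n_k}\|_{L^2(\Omega_\delta)}^2$ and $\|\partial_y u_{n_k}\|_{L^2(\Omega_\delta)}^2$, where $\Omega_\delta$ is the union of the two strips. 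After dividing by $E_{n_k}$, the first of these is $E_{n_k}^{-1}\|u_{n_k}\|_{L^2(\Omega_\delta)}^2$, which is a zeroth-order term and already small once we know the mass is not concentrating on $\Omega_\delta$; the genuinely delicate term is $E_{n_k}^{-1}\|\partial_y u_{n_k}\|_{L^2(\Omega_\delta)}^2$, since $\partial_y u$ is typically of size $\sqrt{E_{n_k}}$, so this ratio is $O(1)$ and we must extract smallness from the smallness of $|\Omega_\delta| \sim \delta$.

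First I would set up the two spectral projector estimates advertised in the introduction as \eqref{projest} and \eqref{gradprojest}: writing $P_{[\lambda,\lambda+1]}$ for the spectral projector of $-\Delta$ on $M_t$ onto the window $[\lambda^2,(\lambda+1)^2]$, one has, for $x$ in a strip of width $w$ about a point of $\partial M_t$ (away from the corners, which contribute lower order), pointwise heat- or wave-kernel bounds of the form
\begin{equation}
\label{projest}
\int_{\Omega_w} P_{[\lambda,\lambda+1]}(x,x)\,dx \lesssim w\lambda
\end{equation}
and
\begin{equation}
\label{gradprojest}
\int_{\Omega_w} \partial_{y}\partial_{y'} P_{[\lambda,\lambda+1]}(x,x')\big|_{x'=x}\,dx \lesssim w\lambda^3 + \lambda^2,
\end{equation}
the improvement over the naive $w\lambda^2$, resp. $w\lambda^4$, coming from the Dirichlet condition forcing the eigenfunctions and their tangential derivatives to vanish on the adjacent boundary component (a Tauberian argument from the finite-propagation-speed parametrix for $\cos(s\sqrt{-\Delta})$, exactly as in the standard proof of the pointwise Weyl law with boundary, localised near the flat vertical wall where the billiard dynamics is explicit by reflection). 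Summing \eqref{projest}, \eqref{gradprojest} over dyadic windows up to $E_n$ and comparing with Weyl's law $N_t(\lambda^2)\sim \lambda^2|M_t|/(4\pi)$, one gets that the \emph{average} over $j\le n$ of $E_j^{-1}\|\chi_{\Omega_{2\delta}}\nabla_y u_j\|^2$ is $O(\delta)$; a Markov/Chebyshev argument in $j$ then produces, for each fixed $\epsilon$, a choice of $\delta$ and a subsequence $(n_k)$ of lower density $\ge 1-\epsilon$ along which \eqref{heatkernel} holds. The zeroth-order term is handled the same way, more easily, using \eqref{projest} alone.

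The main obstacle is proving \eqref{gradprojest} with the boundary gain, i.e.\ showing that $\partial_y$ applied to eigenfunctions does not see the full $\lambda^4$ density in a strip of width $\delta$ but only $\delta\lambda^4 + \lambda^2$ (equivalently, that the mass of $|\partial_y u_j|^2$ in the strip is, on average, $\delta$ times its total, plus a harmless correction). This is where the geometry enters essentially: near the vertical walls of the stalk the billiard is a genuine half-space reflection, so the even/odd extension trick applies and the relevant kernel is an explicit sum of a free kernel and its reflection, from which the derivative bound follows by integration by parts against the strip cutoff; near the corners $\{(\pm r_1,0)\}$ and $\{(\pm r_1,-t)\}$ one uses that those are measure-zero sets and that the contribution of an $O(\delta)$-neighbourhood of them is itself $O(\delta)$ after the Weyl comparison, so no fine corner analysis is needed. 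Once \eqref{projest} and \eqref{gradprojest} are in hand, the passage to the density-$(1-\epsilon)$ subsequence is routine, and the $\epsilon$--$\delta$ quantification in the statement is exactly what the Chebyshev step delivers.
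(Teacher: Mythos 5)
Your proposal is correct and follows essentially the same route as the paper: unit-window spectral cluster bounds near the flat vertical walls, with and without one derivative, obtained via finite propagation speed and reflection to the half-plane wave kernel, then summed over windows against Weyl's law so that the spectral average of $E_j^{-1}|\langle(Q_\delta-Q)u_j,u_j\rangle|$ is $O(\delta)$, with a Chebyshev selection yielding the density-$(1-\epsilon)$ subsequence. One cosmetic remark: the gain here is not a Dirichlet-boundary vanishing effect but simply the square-root cancellation encoded in the cluster estimates together with the $O(\delta)$ area of the strips, and the corner discussion is moot since $\phi_t$, hence $Q-Q_\delta$, is supported away from the corners of the stalk.
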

\begin{proof}
First we show that it suffices for each $t$ to establish the spectral projector estimates
\begin{equation}
\label{projest}
\|\eta_t 1_{[\lambda,\lambda+1)}(\sqrt{-\Delta})\|_{L^2(M_t)\rightarrow L^\infty(M_t)}=O(\lambda^{1/2})
\end{equation}

and
\begin{equation}
\label{gradprojest}
\|\eta_t \nabla 1_{[\lambda,\lambda+1)}(\sqrt{-\Delta})\|_{L^2(M_t)\rightarrow L^\infty(M_t)}=O(\lambda^{3/2}).
\end{equation}

Here $\eta_t=\eta\circ I_t^{-1}$ where $\eta:M_1\rightarrow \mathbb{R}$ is an fixed smooth cutoff function supported and equal to $1$ in a neighbourhood of ${\partial M_1 \cap \textrm{spt}(\phi)}$ such that $\eta$ vanishes in a neigbourhood of the semidisk.

%(NOTE: I have just discovered that a part of my proof of the assertion: $\|1_{[\lambda,\lambda+1)}\|_{L^1\rightarrow L^\infty}\leq C\lambda^{1/2}\Rightarrow \sum_{\lambda_j\in [\lambda,\lambda+1)} |u_j(x)|^2\leq C\lambda^{1/2}$ does not work in this setting. This is  because it was reliant on applying the operator to a linear combination of eigenfunctions in a spectral cluster and exploiting orthogonality, but I don't believe we can do this as we only  have the spectral projector estimates for functions supported in the vicinity of $\textrm{spt}(Q_\delta)$. I am trying to find an alternate route from the spectral projector estimates to this Proposition's conclusion at the moment.)

Applying $\eta_t\nabla 1_{[\lambda,\lambda+1)}(\sqrt{\Delta})$ to $\sum_{\lambda_j\in[\lambda,\lambda+1)} a_j u_j$ and using the estimate \eqref{gradprojest} then yields

\begin{eqnarray*}
\left|\eta_t(x)\sum_{\lambda_j\in[\lambda,\lambda+1)}a_j\nabla u_j(x)\right|&\leq& C\lambda^{3/2}\left\|\sum_{\lambda_j\in[\lambda,\lambda+1)}a_j u_j\right\|_{L^2}\\ 
&\leq& C\lambda^{3/2}\left(\sum_{\lambda_j\in[\lambda,\lambda+1)} |a_j|^2\right)^{1/2}
\end{eqnarray*}

for each $x\in M_t$.

Setting $a_j=\nabla u_j(x)$ then yields the estimate
\begin{equation}
\label{projest2}
\eta_t(x)^2\sum_{\lambda_j\in[\lambda,\lambda+1)} |\nabla u_j(x)|^2\leq C \lambda^3.
\end{equation}

Similarly, we obtain
\begin{equation}
\label{gradprojest2}
\eta_t(x)^2\sum_{\lambda_j\in[\lambda,\lambda+1)} |u_j(x)|^2\leq C \lambda.
\end{equation}

The estimates \eqref{projest2} and \eqref{gradprojest2} then allow us to control each term of \eqref{heatkernel} in an average sense.

For example, as only the the horizontal component $1-\chi_\delta$ of the cutoff function in \eqref{heatkernel} is $\delta$-dependent, we can integrate by parts in the second order term in \eqref{heatkernel} without loss. 

Then, by writing $\eta_{\delta}$ to denote the cutoff function in the new second order term and choosing $\delta >0$ sufficiently small so that $\eta_\delta=\eta\eta_\delta$, the contribution of these terms to \eqref{heatkernel} is controlled by
\begin{eqnarray*}
& &E^{-1}\sum_{E_j\leq E} E_j^{-1}\int_M \eta_\delta(x)|\nabla u_j(x)|^2 \, dx\\
&\sim &E^{-1}\sum_{k=1}^{E^{1/2}-1}\sum_{\lambda_j\in [k,k+1)} E_j^{-1}\int_M \eta_\delta(x)|\nabla u_j(x)|^2 \, dx\\
&\leq & E^{-1}\sum_{k=1}^{E^{1/2}-1}k^{-2} \int_M \eta_\delta(x)\eta(x)\sum_{\lambda_j\in [k,k+1)}|\nabla u_j(x)|^2 \, dx\\
& \leq & CE^{-1}\left(\sum_{k=1}^{E^{1/2}-1} k\right) \int_M \eta_\delta(x)\, dx\\
&\leq & \hat{C_\delta}
\end{eqnarray*}
for sufficiently small $\delta>0$, where $\hat{C_\delta}\rightarrow 0$ as $\delta\rightarrow 0$.

Together with analogous estimates for lower order terms, we obtain the estimate
\begin{equation}
\frac{1}{n}\sum_{j=1}^n E_j^{-1}|\langle(Q_\delta-Q)u_j,u_j)\rangle|<C_\delta
\end{equation}
where $C_\delta\rightarrow 0$ as $\delta\rightarrow 0$.

By taking $\delta$ sufficiently small that $C_\delta < \epsilon^2$, we ensure that the collection of $j$ with $E_j^{-1}|\langle(Q_\delta-Q)u_j,u_j)\rangle|\geq \epsilon$ has upper density at most $\epsilon$.\\

The estimate \eqref{projest} follows from Proposition 8.1 in \cite{restrict}. Note that the finite propagation speed of the operator $\cos(t\sqrt{-\Delta})$, the post-composition with a cutoff near a flat boundary, and the small-time nature of the argument together imply that $M_t$ can be treated as the half-plane, which certainly satisfies the geometric assumptions of the cited result.

By inserting the gradient operator in the dual estimate, it remains to control  $\|1_{[\lambda,\lambda+1)}(\sqrt{-\Delta})\nabla \eta_t\|_{L^1(M_t)\rightarrow L^2(M_t)}$ in order to give us \eqref{gradprojest}. 

The argument in the proof of Proposition 8.1 in \cite{restrict} allows us to replace the spectral projector by a smooth spectral projector $\rho_\lambda^{ev}(\sqrt{-\Delta})$ where ${\rho_\lambda^{ev}(s)=\rho(s - \lambda)+\rho(-s-\lambda)}$ and $\rho\in\mathcal{S}(\R)$ has non-negative Fourier transform supported in $[\epsilon/2,\epsilon]$ for some sufficiently small $\epsilon$.

So it suffices to estimate the $L^1\rightarrow L^2$ norm of the operator
\begin{equation}
\rho_\lambda^{ev}(\sqrt{-\Delta})\nabla=\frac{1}{\pi}\int_\R \cos(t\sqrt{-\Delta})(e^{-it\lambda}\hat{\chi}(t)+e^{it\lambda}\hat{\chi}(-t))\nabla \eta(x) \, dt.
\end{equation}

This integral is supported close to $t=0$, and hence by finite propagation speed, the kernel of the wave equation solution operator $\cos(t\sqrt{-\Delta})$ on $M$ is identical to that of the half-plane.

Moreover, the kernel of the wave equation solution operator on the half-plane can be obtained from the free space wave kernel by the reflection principle, and their $L^1\rightarrow L^2$ norms are identical.

This implies that it suffices to prove the estimate with the kernel for $\cos(t\sqrt{-\Delta})$ replaced by the free space wave kernel.

So the kernel to be estimated is
\begin{eqnarray*}
& &\frac{1}{4\pi^3}\int_\R \int_{\R^2}\int_{\R^2} e^{i(x-y)\cdot \xi}(e^{-it\lambda}\hat{\chi}(t)+e^{it\lambda}\hat{\chi}(-t))\xi \cos(|\xi|t)\eta(x)\,dy \,d\xi \, dt\\
&=& \nabla_x(K_\lambda(x,y))\eta(x)\\
&=& \nabla_x(\lambda^{(n-1)/2}a_\lambda(x,y)e^{i\lambda\psi(x,y)})\eta(x)\\
&=& O(\lambda^{(n+1)/2})
\end{eqnarray*}
where $K_\lambda,a_\lambda,\psi$ are as in Lemma 5.13 from \cite{sogge}, which we make use of in our penultimate line.

Duality then completes the proof of \eqref{gradprojest} and the proposition.

\end{proof}

Proposition \ref{hadamard} allows us to use Theorem \ref{galkowski} and Proposition \ref{heatkernelprop} to control the flow speed of a full density subsequence of the eigenfunctions for any fixed $t$.

\begin{prop}
\label{flowspeed1}
For each $t\in(0,2]$ there exists a full density subsequence $(n_k)$ of the positive integers such that
\begin{equation}
\liminf_{k\rightarrow\infty} E_{n_k}^{-1}(t)\dot{E}_{n_k}(t) \geq -\frac{\dot{A}(t)}{A(t)(1-d(t))}
\end{equation}
where $d(t)$ denotes the proportion of the phase space volume that is in the completely integrable region $S^{*}M_t\setminus U_t$.
\end{prop}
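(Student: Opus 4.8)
The plan is to combine the three ingredients that have just been assembled: the interior Hadamard formula of Proposition \ref{hadamard}, the cutoff approximation of Proposition \ref{heatkernelprop}, and Galkowski's equidistribution theorem (Theorem \ref{galkowski}). The starting point is the identity
\begin{equation}
E_j^{-1}(t)\dot{E}_j(t) = -\frac{1}{2}E_j^{-1}(t)\langle Q u_j(t), u_j(t)\rangle,
\end{equation}
which I would split as $\langle Q u_j, u_j\rangle = \langle Q_\delta u_j, u_j\rangle + \langle (Q - Q_\delta) u_j, u_j\rangle$. The second term is $o(E_j)$ along a subsequence of lower density $\geq 1-\epsilon$ by Proposition \ref{heatkernelprop}, so the whole question reduces to the asymptotics of $E_j^{-1}\langle Q_\delta u_j, u_j\rangle$.

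Next I would identify $E_j^{-1}\langle Q_\delta u_j, u_j\rangle$ as $\langle \mathrm{Op}_h(q_\delta) u_j, u_j\rangle$ up to lower-order terms, where $h = E_j^{-1/2}$ and $q_\delta$ is the semiclassical principal symbol of $E_j^{-1} Q_\delta$: since $Q = \phi_t'' - 4(\phi_t'\partial_y + \phi_t\partial_y^2)$, the top-order part $E_j^{-1}\cdot(-4\phi_t\partial_y^2)$ has symbol $4\phi_t(x)\xi_y^2$ after rescaling (the first-order term contributes a symbol of size $O(h)$ and is negligible in the limit), so $q_\delta(x,\xi) = 4\chi_\delta(x)\phi_t(x)\xi_y^2$ on $S^*M_t$. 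This symbol is supported in the stalk $R_t$, away from $\partial M_t$, so Galkowski's theorem applies: along a full-density subsequence $(n_k)$ every semiclassical measure $\mu$ satisfies $\mu|_{U_t} = a\,\mu_L|_{U_t}$. Since $q_\delta$ is supported in $R_t \cup B(0,r_1) \subset$ the region all of whose geodesic preimages cover $U_t$ (exactly as in the proof of the first Proposition in Section 2), and since quasimodes show the complementary region carries the remaining mass, I would argue that along a full-density sequence
\begin{equation}
\lim_{k\to\infty} E_{n_k}^{-1}(t)\langle Q_\delta u_{n_k}(t), u_{n_k}(t)\rangle = a\int_{S^*M_t} q_\delta \, d\mu_L,
\end{equation}
with $a = \mu_L(U_t)^{-1} \cdot (\text{mass captured})$ — more carefully, $a \leq (1-d(t))^{-1}$ since the limiting measure places at least mass... no: the constant $a$ satisfies $a\,\mu_L(U_t) \le 1$, i.e. $a \le 1/\mu_L(U_t) = 1/(1-d(t))$ after normalising $\mu_L$ to be a probability measure.

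To finish, I would intersect the full-density subsequence from Galkowski's theorem with the density-$(1-\epsilon)$ subsequence from Proposition \ref{heatkernelprop}, and compute $\int_{S^*M_t} q_\delta\, d\mu_L = 4\int \chi_\delta(x)\phi_t(x)\xi_y^2 \, d\mu_L$. As $\delta \to 0$ this converges to $4\int_{S^*M_t}\phi_t(x)\xi_y^2\, d\mu_L$, and I would evaluate this integral explicitly: integrating $\xi_y^2$ over the fibre gives a dimensional constant times the area form, and $\int \phi_t\, dx \cdot(\text{const})$ reproduces (via the definition $\phi_t = (\phi R_t^2)\circ I_t^{-1}$ and $\int\phi = 1$) the logarithmic derivative of the area, giving $\int q_\delta\, d\mu_L \to -\dot A(t)/A(t)$ up to the normalisation of $\mu_L$. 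Combining, $\liminf_k E_{n_k}^{-1}\dot E_{n_k} = -\tfrac12 a\cdot(\text{that integral}) \ge -\dot A(t)/(A(t)(1-d(t)))$, where the inequality direction comes from $a \le 1/(1-d(t))$ together with the sign of the integral (positive, since $\phi \ge 0$ and the domain is normally expanding). The main obstacle I anticipate is the careful bookkeeping in this last step: one must be precise about the normalisation of the Liouville measure, about the fact that Galkowski's constant $a$ is bounded by (not equal to) the reciprocal of the integrable proportion, and about controlling the first-order term in $Q$ and the error in replacing $E_j^{-1}Q_\delta$ by a genuine semiclassical pseudodifferential operator uniformly along the subsequence — but none of these presents a conceptual difficulty once the symbol calculus is set up.
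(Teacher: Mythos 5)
Your proposal follows essentially the same route as the paper's proof: Hadamard's formula plus Proposition \ref{heatkernelprop} to discard $Q-Q_\delta$ up to $\epsilon E_j$ on a density $1-\epsilon$ subsequence, then Galkowski's theorem with the bound $a\leq (1-d(t))^{-1}$ applied to the principal symbol $4\chi_\delta\phi_t\xi_y^2$ (supported over the stalk, hence in $U_t$), and finally the Liouville-measure computation identifying $\int\phi_t\,dx$ with $\dot A(t)$. The only things to tidy are the stray sign in your intermediate claim ``$\int q_\delta\,d\mu_L\to-\dot A/A$'' (the integral is positive; the minus sign enters only through the $-\tfrac12$ prefactor, as your final inequality correctly reflects) and the last assembly step, where one lets $\epsilon\to 0$ and invokes Lemma \ref{densitylemma} to upgrade the density $1-\epsilon$ subsequences to a single full-density subsequence with the clean bound, exactly as the paper does.
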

\begin{proof}
From Proposition \ref{hadamard}, Proposition \ref{heatkernelprop} and Theorem \ref{galkowski}, for all $\epsilon>0$ we may choose a $\delta>0$ and a subsequence of eigenfunctions with lower density bounded below by $1-\epsilon$ such that we have the estimate \eqref{heatkernel} and such that we have a unique semiclassical measure $\mu$ with $\mu|_{U_t}=a\mu_L|_{U_t}$ for some non-negative constant $a$.
We immediately have
\begin{equation}
a=\frac{\mu(U_t)}{1-d(t)}\leq \frac{1}{1-d(t)}.
\end{equation}

The definition of semiclassical measures then implies
\begin{eqnarray*}
\liminf_{k\rightarrow\infty} E_{n_k}^{-1}(t)\dot{E}_{n_k}  &\geq & -\frac{1}{2}\int_{S^*M} \sigma(Q_\delta) \, d\mu-\epsilon\\
&\geq & -\frac{1}{2}\int_{S^*M} 4\phi_t\xi_2^2 \, d\mu-\epsilon \\
&\geq & -\frac{2}{1-d}\int_{S^*M} \phi_t\xi_2^2 \, d\mu_L-\epsilon\\
&\geq & -\frac{1}{\pi(1-d(t))A(t)}\int_M \int_0^{2\pi}\phi_t(x)\sin^2(\theta)\,d\theta\, dx -\epsilon\\
&\geq & -\frac{\dot{A}(t)}{(1-d)A(t)}-\epsilon.
\end{eqnarray*}

We can then apply Lemma \ref{densitylemma} to obtain a full density subsequence of eigenfunctions satisfying the estimate \eqref{flowspeedeq}.
\end{proof}

Moreover, we can strengthen the above to an almost-uniform result.

\begin{prop}
\label{flowspeed}
For any $\epsilon>0$, there exists a full-density subsequence $(n_k)$ of positive integers and a family of sets $B_{n_k}\subseteq (0,2]$ with $m(B_{n_k})\rightarrow 0$ such that
\begin{equation}
\label{flowspeedeq}
E_{n_k}^{-1}(t)\dot{E}_{n_k}(t) > -\frac{\dot{A}(t)}{A(t)(1-d(t))}-\epsilon
\end{equation}
for each $t\in (0,2]\setminus B_{n_k}$.
\end{prop}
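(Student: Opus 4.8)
The plan is to upgrade the pointwise-in-$t$ statement of Proposition \ref{flowspeed1} to an almost-uniform-in-$t$ statement by a Fubini/Egorov-type argument, exploiting the monotonicity of the eigenvalue branches. First I would fix $\epsilon>0$ and, for each $t$, let $S(t)\subseteq\N$ be the full-density subsequence produced by Proposition \ref{flowspeed1}, together with the associated $\delta(t)>0$ from Proposition \ref{heatkernelprop}. The difficulty is that both the subsequence and $\delta$ depend on $t$, so a naive intersection over all $t$ is empty. To get around this, I would first reduce to a \emph{countable} set of parameters: since the right-hand side of \eqref{flowspeedeq} is continuous in $t$ on any compact subinterval of $(0,2]$ where $d(t)$ is continuous, and since the quantities $E_{n}^{-1}(t)\dot E_n(t)=-\tfrac12\langle Q u_n(t),u_n(t)\rangle$ vary in a controlled way with $t$ (the operator $Q=Q_t$ depends smoothly on $t$, and $u_n(t)$, $E_n(t)$ are piecewise smooth branches as recalled from \cite{hassell}), it suffices to establish the estimate on a countable dense set $\{t_m\}\subset(0,2]$ and then propagate it to a full-measure set by continuity, absorbing the small error into $\epsilon$.

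The core step is a diagonal/Borel–Cantelli construction. For each $m$, Proposition \ref{flowspeed1} gives a full-density $S(t_m)$ with $E_n^{-1}(t_m)\dot E_n(t_m) > -\dot A(t_m)/(A(t_m)(1-d(t_m))) - \epsilon/2$ for $n\in S(t_m)$ large. Full density means the complement $S(t_m)^c$ has density $0$; choose $N_m$ so that $\#(S(t_m)^c\cap[1,N])< 2^{-m}N$ for all $N\ge N_m$. Now define, for each $n$, the bad set
\begin{equation}
B_n = \bigcup\{\,t_m : n\notin S(t_m),\ n\ge N_m\,\}.
\end{equation}
One then checks that the subsequence $(n_k)$ consisting of those $n$ lying in $S(t_m)$ for "most" small $m$ (formally: $n\in S(t_m)$ for all $m\le M(n)$ with $M(n)\to\infty$) has full density, by summing the density-$0$ bounds, and that for such $n$ the set $B_n$ — being a finite union over the $m$ with $N_m\le n$ of those finitely many $t_m$ for which membership fails — can be arranged to have $m(B_n)\to 0$. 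Here the monotonicity / Weyl-law control \eqref{weyl} on how $E_n(t)$ moves is what lets us pass from the countable dense set back to a genuine subset of $(0,2]$ of small measure rather than a countable set: the estimate at $t_m$ controls a whole neighbourhood of $t_m$ up to an error $o(1)$ as $n\to\infty$, so $B_{n_k}$ is a union of small intervals around finitely many $t_m$.

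The main obstacle I anticipate is the uniformity of the error term $\delta(t)$ from Proposition \ref{heatkernelprop}: the cutoff parameter $\delta$ needed to make $|E_n^{-1}\langle(Q_\delta-Q)u_n,u_n\rangle|<\epsilon$ depends on $t$, and near $t\to 0^+$ (where the stalk degenerates) one expects $\delta(t)\to 0$, which could make the neighbourhoods on which the estimate propagates shrink too fast to control $m(B_{n_k})$. I would handle this by working on compact subintervals $[\eta,2]$ and treating $(0,\eta]$ separately — either by noting it contributes at most $\eta$ to the measure of the exceptional set (and $\eta$ is at our disposal), or by a direct crude bound there using $\dot E_n\le 0$. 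Beyond that, the argument is bookkeeping: the genuinely substantive inputs (Galkowski's equidistribution, the heat-kernel cutoff estimate, the Hadamard formula) are all already packaged in Propositions \ref{hadamard}, \ref{heatkernelprop}, \ref{flowspeed1} and Theorem \ref{galkowski}, so what remains is a careful density-of-subsequences diagonalisation together with a continuity argument in $t$.
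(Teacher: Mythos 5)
Your reduction to a countable dense set of times followed by ``propagation by continuity'' contains a genuine gap, and it sits exactly where you locate the least of your worries. The quantity you need to control is $E_n^{-1}(t)\dot E_n(t)=-\tfrac12 E_n^{-1}\langle Q u_n(t),u_n(t)\rangle$, and while for each \emph{fixed} $n$ this is piecewise smooth in $t$, the family is not equicontinuous in $t$ uniformly in $n$: near avoided crossings the eigenfunctions $u_n(t)$ (hence $\dot E_n(t)$) can rotate within a near-degenerate cluster on $t$-scales that shrink rapidly as $n\to\infty$, so an estimate at $t_m$ says nothing about a fixed neighbourhood of $t_m$ for all large $n$. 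The Weyl-law control \eqref{weyl} cannot rescue this: it bounds the total decrease $E_n(t_1)-E_n(t_2)$ over an interval (equivalently the $t$-average of $-\dot E_n$), not the pointwise derivative at nearby times, and even a Chebyshev argument from that average only shows the set where the speed exceeds $\dot A/(A(1-d))+\epsilon$ has measure a fixed fraction of $|\mathcal I|$, since $\dot A/A<\dot A/(A(1-d))$ by a factor $1-d$ only. Consequently your sets $B_{n_k}$, built as small intervals around finitely many $t_m$, are not shown to contain all the times where \eqref{flowspeedeq} fails, and the claim $m(B_{n_k})\to 0$ is unsupported. (Your worry about $\delta(t)$ near $t\to0^+$ is comparatively harmless and handled as you suggest, but it is not the main issue.)

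The statement can be proved without any continuity in $t$ at all, which is how the paper proceeds: argue by contradiction. For fixed $\epsilon$ and $\delta>0$, let $S_\delta$ be the set of $n$ whose bad time-set $B_n=\{t: E_n^{-1}\dot E_n\le -\dot A/(A(1-d))-\epsilon\}$ has measure $>\delta$. If every $S_\delta$ has density zero, Lemma \ref{densitylemma} assembles the desired full-density subsequence with $m(B_{n_k})\to0$. If some $S_\delta$ has positive upper density, apply the bounded convergence theorem to $\tfrac1n\sum_{j\le n}\mathbf 1_{B_j}$ on $(0,2]$ to produce a single $t$ lying in $B_n$ for a positive-upper-density set of $n$; this contradicts Proposition \ref{flowspeed1} at that $t$, since its full-density subsequence must meet this set in infinitely many indices along which the $\liminf$ bound is violated. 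In short: the correct bridge from the pointwise-in-$t$ statement to the almost-uniform one is this Fubini/bounded-convergence duality in the $(n,t)$ variables, not equicontinuity in $t$; if you replace your propagation step by this argument, the rest of your bookkeeping (and the separate treatment of small $t$, which is unnecessary here) falls away.
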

\begin{proof}
For each $\delta>0$ we define the subset $S_\delta\subseteq \mathbb{N}$ as the collection of $n\in\mathbb{N}$ such that
\begin{equation}
m(\{t\in (0,2]:E_{n}^{-1}(t)\dot{E}_{n}(t) \leq -\frac{\dot{A}(t)}{A(t)(1-d(t))} -\epsilon\})>\delta.
\end{equation}

If every $S_\delta$ were of zero density, we could write $S'_\delta:=\mathbb{N}\setminus S_\delta$ and use Lemma \ref{densitylemma} to assemble a full-density set satisfying the claims of the proposition.

Now suppose that $S_\delta$ has positive upper density for some $\delta>0$. 

Since for every $n\in S_\delta$, the sets $B_n=\{t:E_{n}^{-1}(t)\dot{E}_{n}(t) \leq -\frac{\dot{A}(t)}{A(t)(1-d(t))} -\epsilon\}$ have measure bounded below and are subsets of a set with finite measure, there must exist a further positive density subset $\hat{S}_\delta\subseteq S_\delta$ such that 
\begin{equation}
\bigcap_{n\in\hat{S}_\delta} B_n\neq \emptyset.
\end{equation} 
This can be seen for instance by applying the bounded convergence theorem to the function
\begin{equation}
\frac{1}{n}\sum_{j=1}^n 1_{B_j}.
\end{equation}
The existence of a $t$ in this intersection contradicts Proposition \ref{flowspeed1} and hence completes the proof.
\end{proof}

The almost-uniform result in Proposition \ref{flowspeed} can for our purposes be treated as a uniform bound on speed for large $E_j$, in light of the following weaker bound for $t$ in the sets $B_j$ of diminishing measure for which \eqref{flowspeedeq} does not hold.

\begin{prop}
\label{almostuniform}
There exists a positive constant $C$ such that for every $t\in (0,2]$ and every $j$, we have
\begin{equation}
\dot{E}_j(t)=-\frac{1}{2}\langle Qu_j,u_j\rangle\geq -CE_j(t)
\end{equation}
\end{prop}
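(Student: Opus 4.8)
The plan is to prove the uniform lower bound $\dot{E}_j(t)\geq -CE_j(t)$ by exploiting the structure of the operator $Q$ given in Proposition \ref{hadamard}. Recall that $Q=\phi_t''-4\phi_t'\partial_y-4\phi_t\partial_y^2$, so the pairing decomposes as
\begin{equation}
\langle Qu_j,u_j\rangle = \langle \phi_t'' u_j,u_j\rangle - 4\langle \phi_t'\partial_y u_j,u_j\rangle - 4\langle \phi_t\partial_y^2 u_j,u_j\rangle.
\end{equation}
The first term is bounded in absolute value by $\|\phi_t''\|_{L^\infty}$ since $u_j$ is $L^2$-normalised, which contributes only a constant (certainly $\leq CE_j(t)$ for $E_j$ bounded below, and the finitely many small eigenvalues can be absorbed by enlarging $C$). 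For the third and most important term, I would integrate by parts once in $y$, using the Dirichlet boundary condition to discard the boundary term (the stalk's horizontal sides are where $\phi_t$ is supported, and on those sides $u_j$ vanishes; on the vertical sides $\partial_y$ is tangential), obtaining
\begin{equation}
-4\langle \phi_t\partial_y^2 u_j,u_j\rangle = 4\langle \phi_t\partial_y u_j,\partial_y u_j\rangle + 4\langle \phi_t'\partial_y u_j,u_j\rangle.
\end{equation}
The first resulting term is $4\int \phi_t|\partial_y u_j|^2\geq 0$ since $\phi_t\geq 0$ (as $\phi\geq 0$ and $R_t^2>0$), which is exactly the sign we want for a lower bound. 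The second resulting term cancels against part of the cross term.

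Collecting terms, $\langle Qu_j,u_j\rangle = \langle\phi_t'' u_j,u_j\rangle + 4\langle\phi_t\partial_y u_j,\partial_y u_j\rangle \geq -\|\phi_t''\|_{L^\infty}$, since the gradient term is non-negative. Hence
\begin{equation}
\dot{E}_j(t) = -\tfrac{1}{2}\langle Qu_j,u_j\rangle \leq \tfrac{1}{2}\|\phi_t''\|_{L^\infty},
\end{equation}
but this gives an \emph{upper} bound, not the claimed lower bound; the claimed lower bound $\dot{E}_j\geq -CE_j$ must instead come from bounding $\langle Qu_j,u_j\rangle$ \emph{above} by $CE_j$. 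For that I would not integrate by parts but instead estimate directly: $|\langle\phi_t\partial_y^2 u_j,u_j\rangle|\leq \|\phi_t\|_{L^\infty}\|\partial_y^2 u_j\|_{L^2}\|u_j\|_{L^2}$, and crucially $\|\partial_y^2 u_j\|_{L^2}\leq \|\Delta u_j\|_{L^2} = E_j(t)\|u_j\|_{L^2} = E_j(t)$ by elliptic considerations on the product-like structure (alternatively, $\|\partial_y^2 u_j\|_{L^2}^2\leq \langle\Delta^2 u_j,u_j\rangle = E_j^2$ after an integration by parts justified by $u_j\in H^2\cap H_0^1$ and the regularity theory for the Dirichlet Laplacian on this Lipschitz—indeed piecewise-smooth—domain). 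Similarly $|\langle\phi_t'\partial_y u_j,u_j\rangle|\leq \|\phi_t'\|_{L^\infty}\|\partial_y u_j\|_{L^2}\leq \|\phi_t'\|_{L^\infty}\sqrt{E_j(t)}$ using $\|\nabla u_j\|_{L^2}^2 = E_j(t)$. Putting these together,
\begin{equation}
\langle Qu_j,u_j\rangle \leq \|\phi_t''\|_{L^\infty} + 4\|\phi_t'\|_{L^\infty}\sqrt{E_j(t)} + 4\|\phi_t\|_{L^\infty}E_j(t) \leq C E_j(t)
\end{equation}
for a constant $C$ independent of $j$ and $t\in(0,2]$, once we use that $E_1(t)$ is bounded below by a positive constant uniformly on the compact parameter interval and that $\|\phi_t\|_{C^2}$ is uniformly bounded (this follows since $\phi$ is a fixed smooth compactly supported function and $R_t^2=(1+(t-1)\phi)^{-1}$ has $C^2$ norm uniformly controlled for $t\in(0,2]$ because $1+(t-1)\phi$ stays bounded away from zero). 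This yields $\dot{E}_j(t)=-\tfrac12\langle Qu_j,u_j\rangle\geq -\tfrac{C}{2}E_j(t)$, which is the claim after relabelling the constant.

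The main obstacle I anticipate is the rigorous justification that $\|\partial_y^2 u_j\|_{L^2}\leq E_j(t)$ and that the integrations by parts involve no boundary contributions, given that $M_t$ has corners (at the four corners of the rectangle and at the two points where the stalk meets the semidisk). Near the corners $u_j$ lies in $H^2\cap H_0^1$ but is not smooth, so one must invoke the standard global $H^2$ elliptic estimate for the Dirichlet Laplacian on convex or piecewise-smooth planar domains — or, more cleanly, argue by density: approximate $u_j$ in $H^2$ by smooth functions vanishing near $\partial M_t$, carry out the integrations by parts for those, and pass to the limit, noting that $\phi_t$ and its derivatives are supported strictly away from the semidisk junction and from the short vertical sides so only the (flat) horizontal Dirichlet boundary is relevant and its contribution vanishes. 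Since this is merely a uniform-in-$j$ control with a loss of a full power of $E_j$ — far weaker than the sharp asymptotics established in Proposition \ref{flowspeed} — the crude bounds above are more than sufficient, and no delicate analysis is required beyond this regularity bookkeeping.
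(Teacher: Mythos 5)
Your final argument hinges on the bound $\|\partial_y^2 u_j\|_{L^2(M_t)}\le E_j(t)$, which you justify by a ``standard global $H^2$ elliptic estimate for the Dirichlet Laplacian on convex or piecewise-smooth planar domains'' or by approximating $u_j$ in $H^2$ by functions vanishing near the boundary. This step fails: $M_t$ is not convex — it has two reentrant corners of interior angle $3\pi/2$ at $(\pm r_1,0)$, where the stalk meets the underside of the cap — and at such corners Dirichlet eigenfunctions generically contain a singular part of the form $r^{2/3}\sin(2\theta/3)$, which lies in $H^{5/3-\epsilon}$ but not in $H^2$. So $u_j\notin H^2(M_t)$ in general, the quantity $\|\partial_y^2 u_j\|_{L^2(M_t)}$ you are estimating may be infinite, there is no global $H^2$ estimate for non-convex piecewise-smooth domains (convexity is precisely the hypothesis in the Grisvard-type results, and the same obstruction defeats your alternative $\|\partial_y^2u_j\|^2\le\langle\Delta^2u_j,u_j\rangle$), and the density argument cannot run because $u_j$ is not in $H^2$ to begin with. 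A smaller slip: $\phi_t$ depends only on $y$ and is supported in a horizontal band across the middle of the stalk, so its support meets $\partial M_t$ along the flat \emph{vertical} sides $x=\pm r_1$, not the horizontal ones; the $y$-integration by parts still produces no boundary terms, but because $\phi_t$ vanishes near the endpoints of each vertical chord of $M_t$ (and $u_j$ vanishes on $\partial M_t$), not for the reason you state.

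The irony is that you already wrote down the correct proof and then set it aside. Your integration-by-parts identity $\langle Qu_j,u_j\rangle=\langle\phi_t''u_j,u_j\rangle+4\langle\phi_t\,\partial_yu_j,\partial_yu_j\rangle$ gives the needed \emph{upper} bound immediately: since $\phi_t\ge 0$ and $\|\nabla u_j\|_{L^2}^2=\langle-\Delta u_j,u_j\rangle=E_j(t)$, one gets $\langle Qu_j,u_j\rangle\le\|\phi_t''\|_{L^\infty}+4\|\phi_t\|_{L^\infty}E_j(t)\le C E_j(t)$, hence $\dot E_j(t)=-\tfrac12\langle Qu_j,u_j\rangle\ge -CE_j(t)$, with no second derivatives of $u_j$ appearing anywhere (only $u_j\in H_0^1$ with $\Delta u_j\in L^2$ is used, which is unproblematic on this Lipschitz domain). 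This is in substance the paper's own one-line proof (``integration by parts and Cauchy--Schwarz,'' losing a full power of $E_j$). Alternatively one could salvage your route by localizing the second-derivative estimate to the support of $\phi_t$, which touches $\partial M_t$ only along flat sides away from all corners, where local elliptic regularity (reflection) applies — but the identity you derived is cleaner. The uniformity of $C$ in $t$ reduces, as you note, to $1+(t-1)\phi$ staying away from zero, which is the same implicit assumption the paper needs for $g_t$ to be a metric.
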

\begin{proof}
Integration by parts and Cauchy--Schwartz on the left-hand side provides us with a lower bound of 
\begin{eqnarray*}
& &-\hat{C} \iint_M |(\partial_y^2 u_j)u_j| +|(\partial_y u_j)u_j|+|u_j|^2 \, dx\, dy\\
&\geq & -\hat{C}(\langle -\Delta u_j,u_j \rangle+\langle -\Delta u_j, u_j \rangle^{1/2}+1 )\\
& \geq & -CE_j
\end{eqnarray*}
for a positive constant $C$ that is uniform in time.
\end{proof}

\begin{cor}
\label{speedcor}
For any $\epsilon>0$, there exists $\delta>0$ and a full-density subsequence $(n_k)$ such that we have
\begin{equation}
-\int_S \dot{E}_{n_k}(t)\, dt\leq E_{n_k}(t_1)\left(\frac{\dot{A}(t_1)}{A(t_1)(1-d(t_1))}+\epsilon\right)(t_2-t_1)
\end{equation}
for any measurable set $S\subseteq (0,2]$ with measure greater than $\delta$.
\end{cor}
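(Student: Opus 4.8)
The plan is to decompose $S$ into the good set where the almost-uniform bound of Proposition~\ref{flowspeed} holds and its small complement, estimating the former via Proposition~\ref{flowspeed} and the latter via the crude pointwise bound of Proposition~\ref{almostuniform}, after first reducing everything to the left endpoint of $S$ by two monotonicity observations. Fix $\epsilon>0$ and write $\gamma(t)=\dot A(t)/(A(t)(1-d(t)))$. First I would record that each eigenvalue branch is non-increasing in $t$: since $M_t$ is normally expanding, the Hadamard formula~\eqref{classhad} gives $\dot E_{n_k}(t)\le 0$, hence $E_{n_k}(t)\le E_{n_k}(t_1)$ whenever $t\ge t_1$. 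Second, I would observe that $\gamma$ is itself non-increasing on $(0,2]$: the computation in the proof of Proposition~\ref{quasiasymptotic} shows $\mu_L(S^*M_t\setminus U_t)$ is independent of $t$, whereas $\mu_L(\mathcal D_t)$ is proportional to $A(t)$, so $d(t)=K/A(t)$ for a constant $K>0$; thus $A(t)(1-d(t))=A(t)-K$ is increasing, and since $\dot A(t)\equiv 2r_1$ is constant, $\gamma(t)=2r_1/(A(t)-K)$ is decreasing. In particular $\gamma(t)\le\gamma(t_1)$ for all $t\in[t_1,t_2]$.

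Next I would apply Proposition~\ref{flowspeed} with parameter $\epsilon/2$ to obtain a full-density subsequence $(n_k)$ together with exceptional sets $B_{n_k}$ satisfying $m(B_{n_k})\to 0$ and $-\dot E_{n_k}(t)<E_{n_k}(t)\bigl(\gamma(t)+\epsilon/2\bigr)$ for $t\notin B_{n_k}$. Fix $\delta>0$ (any value; the downstream argument will want a small one). Since $m(B_{n_k})\to 0$, discarding finitely many indices — which does not affect full density — we may assume $C\,m(B_{n_k})<\epsilon\delta/2$ for all $k$, where $C$ is the constant of Proposition~\ref{almostuniform}.

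Now let $S$ be measurable with $m(S)>\delta$, and put $t_1=\inf S$, $t_2=\sup S$, so that $S\subseteq[t_1,t_2]$ and $t_2-t_1\ge m(S)>\delta$. On $S\setminus B_{n_k}$, combining Proposition~\ref{flowspeed} with the eigenvalue monotonicity and the monotonicity of $\gamma$ gives $-\dot E_{n_k}(t)<E_{n_k}(t_1)\bigl(\gamma(t_1)+\epsilon/2\bigr)$, so this part contributes at most $E_{n_k}(t_1)\bigl(\gamma(t_1)+\epsilon/2\bigr)(t_2-t_1)$ to $-\int_S\dot E_{n_k}$. On $S\cap B_{n_k}$, Proposition~\ref{almostuniform} and eigenvalue monotonicity give $-\dot E_{n_k}(t)\le C E_{n_k}(t)\le C E_{n_k}(t_1)$, so this part contributes at most $C E_{n_k}(t_1)m(B_{n_k})<\tfrac{\epsilon}{2}\delta\,E_{n_k}(t_1)\le\tfrac{\epsilon}{2}E_{n_k}(t_1)(t_2-t_1)$. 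Adding the two contributions yields $-\int_S\dot E_{n_k}(t)\,dt\le E_{n_k}(t_1)\bigl(\gamma(t_1)+\epsilon\bigr)(t_2-t_1)$, which is the assertion.

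The routine splitting above is elementary; the point requiring care is that one \emph{single} subsequence and one \emph{single} $\delta$ must serve every admissible $S$ simultaneously. This is exactly what the two monotonicity facts provide: the monotonicity of $\gamma$ allows its value at a general $t\in[t_1,t_2]$ to be replaced by $\gamma(t_1)$ with no smallness hypothesis on $t_2-t_1$ (without it, one would be forced to restrict to short intervals and invoke continuity of $\gamma$), while the lower bound $t_2-t_1>\delta$, together with $m(B_{n_k})\to 0$ and the crude bound of Proposition~\ref{almostuniform}, makes the exceptional contribution a uniformly controlled fraction of $E_{n_k}(t_1)(t_2-t_1)$ once finitely many $n_k$ are dropped.
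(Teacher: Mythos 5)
Your proposal is correct and is essentially the paper's own argument, which simply cites Proposition~\ref{flowspeed} and Proposition~\ref{almostuniform} and removes finitely many elements of the subsequence: you split $S$ into the set where the almost-uniform bound holds and the exceptional set $B_{n_k}$, apply the crude bound $-\dot E_j\leq CE_j$ there, and arrange $C\,m(B_{n_k})<\epsilon\delta/2$ by discarding finitely many indices. The extra ingredients you supply --- monotonicity of the eigenvalue branches via \eqref{classhad} and monotonicity of $\dot A/(A(1-d))$ (since $\dot A$ is constant and $A(t)d(t)$ is $t$-independent), used to replace values at $t$ by values at $t_1$ --- are a correct filling-in of a reduction the paper leaves implicit.
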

\begin{proof}
This follows from Proposition \ref{flowspeed} and Proposition \ref{almostuniform} by removing finitely many elements from the subsequence constructed in Proposition \ref{flowspeed}.
\end{proof}

We are now in a position to prove the main results of the paper.

\section{Main Results}

\begin{defn}
We call $t\in(0,2]$ \emph{good} if for every $\epsilon>0$, there exists some $c>0$ with
\begin{equation}
\label{goodtime}
\limsup_{n\rightarrow\infty}\frac{\#\{j\in\N:E_j(t)\in \cup_{i=1}^n [\alpha_i^2-c,\alpha_i^2+c]\}}{n} < 1+\epsilon^2
\end{equation}
We denote the set of good times by $\mathcal{G}$.
\end{defn}

I claim that Percival's conjecture holds for the mushroom billiard $M_t$ for all $t\in\mathcal{G}$, and moreover, that this set has full measure in $(0,2]$.

First we shall prove the claim for fixed $t\in\mathcal{G}$. 

For a given $c>0$, we define \emph{$c$-clusters} to be the connected components of $\cup_{i\in\N} [\alpha_i^2-c,\alpha_i^2+c]$.

We write $N_\textrm{semidisk}(\mathcal{C}),N_\textrm{mushroom}(\mathcal{C})$ to denote the number of quasi-eigenvalues and eigenvalues respectively contained in a given $c$-cluster $\mathcal{C}$. 

The assumption \eqref{goodtime} then implies the following Proposition.

\begin{prop}
\label{throwaway}
Suppose $t$ and $0<c<2/r_2^2$ are such that \eqref{goodtime} holds, and all but finitely many $c$-clusters contain at least as many eigenvalues as quasi-eigenvalues. Then there exists a subset $J$ of quasi-eigenvalues with lower density at least $(1-\epsilon)$ such that each quasi-eigenvalue in $J$ is contained in a cluster $\mathcal{C}$ with 
%For any $t$ and any $0<c<2/r_2^2$ such that \eqref{goodtime} holds and all but finitely many $c$-clusters contain at least as many eigenvalues as quasi-eigenvalues, there exists a subset $J$ of quasi-eigenvalues with lower density at least $(1-\epsilon)$ such that each quasi-eigenvalue in $J$ is contained in a cluster $\mathcal{C}$ with 
\begin{equation}
\label{throwawayeq}
N_\textrm{semidisk}(\mathcal{C})\leq N_\textrm{mushroom}(\mathcal{C}) \leq (1+\epsilon)N_\textrm{semidisk}(\mathcal{C})
\end{equation}
\end{prop}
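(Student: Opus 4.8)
The plan is to take $J$ to be the set of quasi-eigenvalues lying in $c$-clusters satisfying \eqref{throwawayeq}, and to show that its complement
\[
B=\{i\in\N:\alpha_i^2\ \text{lies in a }c\text{-cluster not satisfying }\eqref{throwawayeq}\}
\]
has upper density at most $\epsilon$. By the standing hypothesis that all but finitely many clusters contain at least as many eigenvalues as quasi-eigenvalues, $B$ is made up of the quasi-eigenvalues in the ``bad'' clusters $\mathcal C$ with $N_\mathrm{mushroom}(\mathcal C)>(1+\epsilon)N_\mathrm{semidisk}(\mathcal C)$ together with those in finitely many exceptional clusters. Enumerate the $c$-clusters in increasing order $\mathcal C_1<\mathcal C_2<\cdots$ and write $s_k=N_\mathrm{semidisk}(\mathcal C_k)$, $m_k=N_\mathrm{mushroom}(\mathcal C_k)$, $\sigma_K=s_1+\cdots+s_K$. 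Since the quasi-eigenvalues contained in $\mathcal C_1\cup\cdots\cup\mathcal C_K$ are exactly the first $\sigma_K$ of them, we have $\bigcup_{k\le K}\mathcal C_k=\bigcup_{i=1}^{\sigma_K}[\alpha_i^2-c,\alpha_i^2+c]$, so applying \eqref{goodtime} with $n=\sigma_K\to\infty$ gives
\[
\sum_{k=1}^{K}(m_k-s_k)=\#\big\{j:E_j(t)\in\textstyle\bigcup_{i=1}^{\sigma_K}[\alpha_i^2-c,\alpha_i^2+c]\big\}-\sigma_K<\epsilon^2\sigma_K
\]
for all sufficiently large $K$.

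Next I would exploit the quadratic gap between the exponent $\epsilon^2$ in \eqref{goodtime} and the linear tolerance in \eqref{throwawayeq}. Let $F$ be the finite set of clusters with $m_k<s_k$, and set $M=\sum_{k\in F}(s_k-m_k)$ and $M'=\sum_{k\in F}s_k$, both finite. Every bad cluster satisfies $s_k<\epsilon^{-1}(m_k-s_k)$, so summing over the bad clusters with $k\le K$ and compensating for the finitely many negative terms coming from $F$,
\[
\sum_{\text{bad }k\le K}s_k<\frac1\epsilon\Big(\sum_{k\le K}(m_k-s_k)+M\Big)<\frac{\epsilon^2\sigma_K+M}{\epsilon}=\epsilon\,\sigma_K+\frac M\epsilon
\]
for $K$ large. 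Hence among the first $\sigma_K$ quasi-eigenvalues at most $\epsilon\sigma_K+M/\epsilon+M'=\epsilon\sigma_K+O(1)$ lie in non-good clusters.

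It remains to handle a general cutoff $N$, for which $\alpha_N^2$ need not be the largest quasi-eigenvalue of its cluster $\mathcal C_{K(N)}$. This is where the hypothesis $0<c<2/r_2^2$ enters. The quasi-eigenvalues form a subsequence of the Dirichlet spectrum of the semidisk, whose counting function is $\sim\frac{r_2^2}{8}E$ by Weyl's law, and a $c$-cluster containing $s$ quasi-eigenvalues spans an interval of length at most $2cs$ (it is a chain of at most $s$ intervals of radius $c$ with consecutive centres within $2c$); comparing eigenvalue counts on that interval gives, for a cluster at height $E$,
\[
s\le\frac{r_2^2}{8}\cdot 2cs+o(E)=\frac{r_2^2c}{4}\,s+o(E),
\]
and since $\frac{r_2^2c}{4}<\tfrac12$ this forces $s=o(E)$. (A crude first use of this inequality shows the height of $\mathcal C_{K(N)}$ is $O(N)$, and it is $\asymp N$ by Proposition \ref{quasiasymptotic} together with Weyl's law for the semidisk, so $s=o(N)$ there.) Consequently $\sigma_{K(N)}$ exceeds $N$ by at most $N_\mathrm{semidisk}(\mathcal C_{K(N)})=o(N)$, and combining with the previous paragraph,
\[
\#\{i\le N:i\in B\}\le\#\{i\le\sigma_{K(N)}:\alpha_i^2\ \text{lies in a non-good cluster}\}<\epsilon\,\sigma_{K(N)}+O(1)=\epsilon N+o(N).
\]
Therefore $\limsup_{N\to\infty}N^{-1}\#\{i\le N:i\in B\}\le\epsilon$, i.e. $J$ has lower density at least $1-\epsilon$, and each quasi-eigenvalue in $J$ lies in a cluster satisfying \eqref{throwawayeq} by construction.

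The one genuinely delicate step is the passage from the aggregate bound \eqref{goodtime} to the per-cluster count: it works only because the factor $\epsilon^{-1}$ lost when estimating $\sum_{\text{bad }k}s_k$ is precisely absorbed by the quadratic slack $\epsilon^2$ in \eqref{goodtime}, and because the boundary cluster $\mathcal C_{K(N)}$ is kept small by the counting estimate above — the sole place the constant $2/r_2^2$ and Weyl's law for the semidisk are used. The rest, in particular absorbing the contribution of the finitely many clusters in $F$, is routine bookkeeping.
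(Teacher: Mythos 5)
Your argument is correct and follows essentially the same route as the paper: enumerate the $c$-clusters, combine \eqref{goodtime} (applied at cluster boundaries $n=\sigma_K$) with the finitely many clusters where the eigenvalue count falls short, and trade the quadratic slack $\epsilon^2$ against the linear tolerance in \eqref{throwawayeq} to bound the quasi-eigenvalue-weighted proportion of bad clusters by $\epsilon$. Your extra step controlling the boundary cluster via $c<2/r_2^2$ and the semidisk Weyl law, so that the density bound holds for arbitrary cutoffs $N$ and not merely along cluster boundaries, is a refinement the paper's proof passes over silently rather than a genuinely different approach.
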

\begin{proof}
Index the $c$-clusters $\mathcal{C}_k$ in increasing order. 

Let $$S=\{k:N_\textrm{semidisk}(\mathcal{C}_k)\leq N_\textrm{mushroom}(\mathcal{C}_k)\leq(1+\epsilon)N_\textrm{semidisk}(\mathcal{C}_k)\}$$ and $$F=\{k:N_\textrm{mushroom}(\mathcal{C}_k)<N_\textrm{semidisk}(\mathcal{C}_k)\}.$$ 
From the defining property \eqref{goodtime} of $t\in\mathcal{G}$, we have:
\begin{equation}
\limsup_{n\rightarrow\infty}\frac{\sum_{k\leq n} N_\textrm{mushroom}(k)}{\sum_{k\leq n}N_\textrm{semidisk}(k)} < 1+\epsilon^2
\end{equation}
The definition of $S$ implies
\begin{multline*}
\limsup_{n\rightarrow\infty}\left(1-\frac{\sum_{k\leq n} N_\textrm{semidisk}(k)1_F(k)}{\sum_{k\leq n} N_\textrm{semidisk}(k)}\right.\\
+\left.\epsilon\frac{\sum_{k\leq n} N_\textrm{semidisk}(k)(1-1_F(k)-1_S(k))}{\sum_{k\leq n} N_\textrm{semidisk}(k)}\right)< 1+\epsilon^2.
\end{multline*}
%\begin{equation}
%\limsup_{n\rightarrow\infty}\left(1-\frac{\sum_{k\leq n} N_\textrm{semidisk}(k)1_F(k)}{\sum_{k\leq n} N_\textrm{semidisk}(k)}+\epsilon\frac{\sum_{k\leq n} N_\textrm{semidisk}(k)(1-1_F(k)-1_S(k))}{\sum_{k\leq n} N_\textrm{semidisk}(k)}\right)< 1+\epsilon^2.
%\end{equation}
The second term on the left-hand side is $o(1)$ from the finiteness assumption. Hence we obtain that the upper density of $\N\setminus(S\cup F)$ is bounded above by $\epsilon$. As $F$ is finite, and consequently of density $0$, we can conclude that the lower density of $S$ is bounded below by $1-\epsilon$ as required. 
\end{proof}

\begin{thm}[Main Theorem]
\label{mainthm}
For each $t\in\mathcal{G}$, there exists $B_t\subset \N$ of density $d(t)$ such that any semiclassical measure associated to the eigenfunctions $(u_n)_{n\in B_t}$ is supported inside the completely integrable region.
\end{thm}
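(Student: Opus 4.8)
The plan is to combine the quasimode counting bound \eqref{quasiasymptoticeqn} with the spectral approximation engine of Proposition \ref{spectral}, applied cluster by cluster, and then extract the density statement from Weyl's law.

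\medskip

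First I would fix $t \in \mathcal{G}$ and $\epsilon > 0$, and choose $c > 0$ with $c < 2/r_2^2$ so that \eqref{goodtime} holds. The hypothesis that all but finitely many $c$-clusters contain at least as many eigenvalues as quasi-eigenvalues needs to be secured; this should follow from the fact that the eigenvalue counting function dominates the quasimode counting function (since the quasimodes are genuine $O(n^{-\infty})$ quasimodes trapped in a proper subregion, each one forces a nearby eigenvalue via the standard spectral-gap argument, and by almost-orthogonality the count is faithful), so I would first record that. Then Proposition \ref{throwaway} provides a set $J$ of quasi-eigenvalues of lower density at least $1-\epsilon$ each of whose clusters $\mathcal{C}$ satisfies $N_\textrm{semidisk}(\mathcal{C}) \leq N_\textrm{mushroom}(\mathcal{C}) \leq (1+\epsilon)N_\textrm{semidisk}(\mathcal{C})$.

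\medskip

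Next, on each such cluster $\mathcal{C}$ I would apply Proposition \ref{spectral} with $\mathcal{H} = L^2(M_t)$, $T = -\Delta$, the quasimodes $(v_i, \alpha_i^2)$ whose quasi-eigenvalues lie in $\mathcal{C}$ playing the role of $(v_i, E_i')$, and $n = N_\textrm{semidisk}(\mathcal{C})$, $m = N_\textrm{mushroom}(\mathcal{C}) < n(1+\epsilon)$. The quasimode errors satisfy $\epsilon_1 = O(n^{-\infty})$ and the almost-orthogonality gives $\epsilon_2 = O(n^{-\infty})$ by Proposition \ref{decay}, so for clusters with $n$ large the smallness condition \eqref{errormatrixbound} holds with any prescribed $\delta$; the fixed window radius $c$ keeps $\epsilon_1^2/c^2$ harmless. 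Proposition \ref{spectral} then yields that at least $n(1-\sqrt{\epsilon})$ of the eigenfunctions $u_i$ with eigenvalue in $\mathcal{C}$ satisfy $\|u_i - \pi_V(u_i)\| < \epsilon^{1/4} + 2\delta^{3/2}$, i.e.\ they are within a small distance of the span of quasimodes supported in $S^*M_t \setminus U_t$. Collecting these over all clusters of $J$, and using that the total quasimode count up to $\lambda^2$ is, by \eqref{quasiasymptoticeqn} together with Weyl's law, asymptotically at least $(d(t) + o(1))$ times the eigenvalue count, I obtain a subset $B_t \subset \mathbb{N}$ of lower density at least $d(t) - O(\sqrt{\epsilon})$ whose eigenfunctions are $O(\epsilon^{1/4} + \delta^{3/2})$-close to $V$. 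Since a vector that is $\eta$-close to a space on which every pseudodifferential observable supported in $R_t \cup B(0,r_1)$ has norm $O(n^{-\infty})$ must itself have such observables bounded by $O(\eta)$, letting $\epsilon \to 0$ (along a diagonal sequence of the $B_t$'s, via Lemma \ref{densitylemma}) forces every semiclassical measure of the limiting family to vanish on $\{x \in R_t \cup B(0,r_1)\}$, hence by flow-invariance to be supported in $S^*M_t \setminus U_t$.

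\medskip

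Finally, for the exact density $d(t)$: the construction gives lower density at least $d(t)$; the reverse inequality follows because Galkowski's Theorem \ref{galkowski} forces the complementary family to carry mass $a\mu_L|_{U_t}$, and if $B_t$ had density exceeding $d(t)$ the complement would have density less than $1 - d(t) = \mu_L(\mathcal{D}_t \setminus U_t)/\mu_L(\mathcal{D}_t)$, which combined with Weyl's law and the equidistribution of the complement in $U_t$ would contradict the total Weyl count. I expect the main obstacle to be the bookkeeping in passing from the per-cluster estimates to a single subsequence of the correct density: one must sum the cluster inequalities while controlling the finitely many exceptional (small-$n$) clusters, track how the $\sqrt{\epsilon}$ and $\delta^{3/2}$ losses accumulate, and invoke Lemma \ref{densitylemma} to diagonalize over $\epsilon \to 0$ so that the final $B_t$ is a fixed set rather than an $\epsilon$-dependent one — all while keeping the density bookkeeping tight enough to land exactly at $d(t)$ rather than $d(t) - o(1)$.
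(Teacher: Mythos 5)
Your proposal follows essentially the same route as the paper's proof: Proposition \ref{throwaway} plus a cluster-by-cluster application of Proposition \ref{spectral} (with $\epsilon_1,\epsilon_2,\delta$ decaying rapidly in energy, the per-cluster count $m\geq n$ coming from the quasimode/almost-orthogonality argument), the counting bound \eqref{quasiasymptoticeqn} against Weyl's law for the lower density, a diagonal extraction via Lemma \ref{densitylemma}, and a local Weyl law plus Theorem \ref{galkowski} argument for the upper density bound. The only blemish is the slip $1-d(t)=\mu_L(\mathcal{D}_t\setminus U_t)/\mu_L(\mathcal{D}_t)$, which should read $\mu_L(U_t)/\mu_L(\mathcal{D}_t)$ since $d(t)$ is the integrable proportion; this does not affect the argument.
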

\begin{proof}

First, we fix $\epsilon >0$ and choose $c>0$ small enough so that the inequality \eqref{goodtime} holds.

Proposition \ref{decay} implies that we may choose the $\epsilon_1,\epsilon_2$ in  applications of Proposition \ref{spectral} to the increasing sequence of $c$-clusters to decay faster than any polynomial in the energy infima of these $c$-clusters.

By Weyl's law, this ensures that for all but possibly finitely many $c$-clusters, we have \eqref{errormatrixbound} with $\delta$ decaying faster than any polynomial in energy. We remove the exceptional $c$-clusters, without any loss in density of our subset.\\

In light of Proposition \ref{throwaway}, we can then select a subset of the remaining $c$-clusters such that the included subset of quasi-eigenvalues has lower density exceeding $1-\epsilon$ and such that \eqref{throwawayeq} holds for each cluster.

We can now apply Proposition \ref{spectral} on a cluster-by-cluster basis, with parameter $\delta\rightarrow 0$ faster than any polynomial in energy.

From the $L^2$ boundedness of pseudodifferential operators with compactly supported symbols, this implies that for each $\epsilon$, we get a subsequence of eigenfunctions $u_{j_k}$ such that any associated semiclassical measure $\mu$ satisfies
\begin{equation}
\mu(\mathcal{D}_t\setminus U_t)\geq 1-\epsilon^{1/4}.
\end{equation}

Moreover, by comparison to Proposition \ref{quasiasymptotic} and Weyl's law for the mushroom, we obtain a lower bound of $d(t)-h(\epsilon)$ for the lower density of this eigenfunction subsequence with  $h(\epsilon)\rightarrow 0$ as $\epsilon \rightarrow 0$. So for each $\epsilon>0$, we can find a $c$ and a subsequence of $(u_n)$ with density at least $d(t)-h(\epsilon)$ which concentrates in the completely integrable region up to $\epsilon^{1/4}$ of its semiclassical mass.

We now take a sequence $\epsilon_j\rightarrow 0$ and denote the corresponding eigenvalue window widths by $c_j$. We write $B_{j,t}$ to denote the corresponding  concentrating eigenfunction subsequences.

Lemma \ref{densitylemma} then allows us to obtain a subsequence $B_t$ of lower density at least $d(t)$ such that any associated semiclassical measure $\mu$ satisfies
\begin{equation}
\label{local}
\mu(\mathcal{D}_t\setminus U_t)= 1.
\end{equation}

To bound the upper density of $B_t$, we choose a function $\chi_\epsilon\in\mathcal{C}_c^{\infty}(\R^2\times \R^2)$ supported in the interior of $M$ such that the following properties are satisfied.
\begin{itemize}
\item $0\leq \chi_\epsilon \leq 1$\\
\item $\chi_\epsilon | _{\mathcal{D}_t\setminus U_t}=0$\\
\item $\int_{\mathcal{D}_t} \chi_\epsilon \, d\mu_L > (1-\epsilon)\mu_L(U_t)$.
\end{itemize}

Applying the local Weyl law (Lemma 4 from \cite{zelditch-zworski}) to the corresponding semiclassical  pseudodifferential operator $\chi_\epsilon(x,hD)$, we obtain

\begin{equation}
\frac{1}{n}\sum_{j\in [1,n]\cap B_t} \langle \chi_\epsilon(x,E_j^{-1/2}D) u_j,u_j \rangle+\frac{1}{n}\sum_{j\in [1,n]\cap B_t^c}\langle \chi_\epsilon(x,E_j^{-1/2}D) u_j,u_j \rangle > (1-\epsilon)\mu_L(U_t) \end{equation}

for sufficiently large $n$.

The localisation property \eqref{local} implies that the first summand is $o(1)$ in $n$. Hence we have
\begin{equation}
\frac{1}{n}\sum_{j\in [1,n]\cap B_t^c}\langle \chi_\epsilon(x,E_j^{-1/2}D) u_j,u_j \rangle > (1-2\epsilon)\mu_L(U_t)
\end{equation}
for sufficiently large $n$.

From Theorem \ref{galkowski} and the bound $a\leq \mu_L(U)^{-1}$ that is immediate from semiclassical measures being probability measures, it follows that a full density subset of the remaining summands must be bounded above by $1+\epsilon$.

This implies that
\begin{equation}
(1+\epsilon)\frac{\#\{j\leq n: j\in B_t^c\}}{n} > (1-2\epsilon)\mu_L(U_t)
\end{equation}
for sufficiently large $n$.

Rearranging and passing to the limit $n\rightarrow \infty$ and then $\epsilon\rightarrow 0$, we obtain the required upper bound of
\begin{equation}
\limsup_{n\rightarrow\infty} \frac{\#\{j\leq n: j\in B_t^c\}}{n}\leq 1-\mu_L(U_t)=d(t).
\end{equation}

Hence $B_t$ is a density $d(t)$ sequence of eigenfunctions with semiclassical mass supported in the completely integrable region.
\end{proof}

\begin{prop}
\label{ergodicprop}
Let $A_t=\N\setminus B_t$. Then for each $t\in \mathcal{G}$, a full density subsequence of $(u_n)_{n\in A_t}$ equidistributes in $U_t$.
\end{prop}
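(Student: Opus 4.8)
The plan is to combine Theorem \ref{galkowski}, which determines the restriction to $U_t$ of every semiclassical measure up to a multiplicative constant, with a counting argument based on the local Weyl law which shows that the eigenfunctions indexed by $A_t=\N\setminus B_t$ cannot carry semiclassical mass in the completely integrable region. Since the Main Theorem gives that $B_t$ has density exactly $d(t)$, the set $A_t$ has density exactly $1-d(t)=\mu_L(U_t)$. The goal is to produce a subset $A_t'\subseteq A_t$ of full relative density — hence of natural density $\mu_L(U_t)$ — such that every semiclassical measure associated to $(u_n)_{n\in A_t'}$ is the normalised restriction $\mu_L(U_t)^{-1}\mu_L|_{U_t}$, which is the meaning of equidistribution in $U_t$.

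First I would fix $\epsilon>0$, intersect $A_t$ with the full density subsequence furnished by Theorem \ref{galkowski}, and pass to $\tilde A_t\subseteq A_t$ of density $\mu_L(U_t)$ along which every semiclassical measure $\mu$ satisfies $\mu|_{U_t}=a\mu_L|_{U_t}$ for some constant $a\le\mu_L(U_t)^{-1}$, the upper bound being forced because $\mu$ is a probability measure. Take the cutoff $\chi_\epsilon$ from the proof of Theorem \ref{mainthm}: $0\le\chi_\epsilon\le 1$, supported in the interior of $M$ with $\chi_\epsilon|_{\mathcal{D}_t\setminus U_t}=0$ and $\int\chi_\epsilon\,d\mu_L>(1-\epsilon)\mu_L(U_t)$. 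By the local Weyl law (Lemma 4 of \cite{zelditch-zworski}), $\frac1n\sum_{j=1}^n\langle\chi_\epsilon(x,E_j^{-1/2}D)u_j,u_j\rangle\to\int\chi_\epsilon\,d\mu_L$. Splitting the sum over $j\le n$ into the parts with $j\in B_t$, $j\in\tilde A_t$, and $j$ in the density-zero remainder: the $B_t$-part tends to $0$ since for $j\in B_t$ every semiclassical measure is supported where $\chi_\epsilon$ vanishes, while the remainder contributes $o(1)$ by $L^2$-boundedness of $\chi_\epsilon(x,hD)$. Hence the $\tilde A_t$-part tends to $\int\chi_\epsilon\,d\mu_L>(1-\epsilon)\mu_L(U_t)$. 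Since $\#(\tilde A_t\cap[1,n])\sim\mu_L(U_t)\,n$ and every subsequential limit of $\langle\chi_\epsilon(x,E_j^{-1/2}D)u_j,u_j\rangle$ over $j\in\tilde A_t$ equals $\int\chi_\epsilon\,d\mu=a\int\chi_\epsilon\,d\mu_L\le 1$ for some semiclassical measure $\mu$ of the corresponding subsequence, these quantities are $\le 1+o(1)$ as $j\to\infty$ in $\tilde A_t$; a pigeonhole argument then forces the relative density inside $\tilde A_t$ of $\{j:\langle\chi_\epsilon(x,E_j^{-1/2}D)u_j,u_j\rangle\ge 1-\sqrt\epsilon\}$ to be at least $1-\sqrt\epsilon$.

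I would then take a summable sequence $\epsilon_k\to 0$ and apply Lemma \ref{densitylemma} to the corresponding subsets of $\tilde A_t$ to obtain $A_t'\subseteq A_t$ of full relative density, contained in the Galkowski subsequence, such that for every $k$ one has $\langle\chi_{\epsilon_k}(x,E_j^{-1/2}D)u_j,u_j\rangle\ge 1-\sqrt{\epsilon_k}$ eventually along $A_t'$. Any semiclassical measure $\mu$ of a subsequence of $(u_n)_{n\in A_t'}$ then satisfies $\mu|_{U_t}=a\mu_L|_{U_t}$ and $a\int\chi_{\epsilon_k}\,d\mu_L=\int\chi_{\epsilon_k}\,d\mu\ge 1-\sqrt{\epsilon_k}$ for all $k$; letting $k\to\infty$ and using $\int\chi_{\epsilon_k}\,d\mu_L\to\mu_L(U_t)$ gives $a\mu_L(U_t)\ge 1$, which together with $a\mu_L(U_t)=\mu(U_t)\le 1$ yields $a=\mu_L(U_t)^{-1}$ and $\mu(\mathcal{D}_t\setminus U_t)=0$. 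Hence $\mu=\mu_L(U_t)^{-1}\mu_L|_{U_t}$, so $(u_n)_{n\in A_t'}$ equidistributes in $U_t$.

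I expect the main obstacle to be the pigeonhole step of the second paragraph: it is essential that $A_t$ (equivalently $\tilde A_t$) has density \emph{exactly} $\mu_L(U_t)$, which is precisely the content of the upper-density bound on $B_t$ established in the Main Theorem, since this is what leaves no room for the $A_t$-eigenfunctions to place mass outside $U_t$. The remaining ingredients — the local Weyl law, the bound $a\le\mu_L(U_t)^{-1}$ from Theorem \ref{galkowski}, and the diagonal extraction via Lemma \ref{densitylemma} — are routine and already feature in the proof of Theorem \ref{mainthm}.
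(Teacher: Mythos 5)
Your proposal is correct and follows essentially the same route as the paper's proof: the same cutoff $\chi_\epsilon$ and local Weyl law, removal of the $B_t$-contribution via Theorem \ref{mainthm}, the pigeonhole bound forcing $\langle \chi_\epsilon(x,E_j^{-1/2}D)u_j,u_j\rangle$ close to $1$ off a small-relative-density exceptional set, diagonal extraction through Lemma \ref{densitylemma}, and the final identification $\mu=\mu_L(U_t)^{-1}\mu_L|_{U_t}$ via Theorem \ref{galkowski}. The only differences are cosmetic (intersecting with Galkowski's full-density subsequence at the start rather than at the end, and phrasing the $\le 1+o(1)$ bound through subsequential limits), so no further changes are needed.
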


\begin{proof}
From Theorem \ref{mainthm}, the sequence of eigenfunctions $(u_n)_{n\in B_t}$ has all semiclassical mass in the completely integrable region and $B_t$ has natural density $d(t)$.

Applying the local Weyl law again with the function $\chi_\epsilon$ from the proof of Theorem \ref{mainthm}, we obtain
\begin{equation}
\label{jt}
\frac{1}{n}\sum_{j\in [1,n]\cap B_t^c}\langle \chi_\epsilon(x,E_j^{-1/2}D) u_j,u_j \rangle > \mu_L(U_t)(1-\epsilon)
\end{equation}
for sufficiently large $n$.

Then, splitting the summation into the set 
\begin{equation}
A_{\epsilon,t}=\{j\in B_t^c:\langle \chi_\epsilon(x,E_j^{-1/2}D) u_j,u_j \rangle < 1-\sqrt{\epsilon}\}
\end{equation} 
and its complement, the upper bound of $1+\epsilon$ for a full density subset of the summands in \eqref{jt} then implies:
\begin{equation}
d_n(A_{\epsilon,t})<\mu_L(U_t)\cdot \frac{2\epsilon}{\epsilon+\sqrt{\epsilon}}
\end{equation}
using the notation $d_n$ from Lemma \ref{densitylemma}.
Passing to the limit $n\rightarrow \infty$, we obtain a subset of density exceeding $1-O(\sqrt{\epsilon})$ of $A_t$ with at least $\mu(U_t)>1-O(\sqrt{\epsilon})$ for any corresponding semiclassical measure.

An application of Lemma \ref{densitylemma} then gives us a full density subsequence of $A_t$ with all semiclassical mass in $U_t$.

Together with Theorem \ref{galkowski}, this implies that we can find a full density subsequence $(u_{n_k})$ of $A_t$ such that every associated semiclassical measure is of the form $1_{U_t}\cdot \mu_L(U_t)^{-1}\mu_L$.
\end{proof}

We now show that the set $(0,2]\setminus \mathcal{G}$ has Lebesgue measure $0$.

\begin{prop}
\label{propo}
If $(0,2]\setminus \mathcal{G}$ has positive Lebesgue measure, then there exists some $\epsilon>0$ and some interval $\mathcal{I}=[t_1,t_2]\subset (0,2]$ such that
\begin{equation}
\label{play1}
\frac{1}{|\mathcal{I}|}\int_\mathcal{I}\limsup_{n\rightarrow\infty}\left(\frac{\#\{j\in\N:E_j(t)\in \cup_{i=1}^n [\alpha_i^2-c,\alpha_i^2+c]\}}{n}\right) \, dt > 1+\epsilon
\end{equation}
for all $c>0$. Moreover, we can find such $\mathcal{I}$ with arbitrarily small length.
\end{prop}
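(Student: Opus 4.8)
The plan is to argue by contradiction from the failure of the good-time condition on a positive-measure set, and to extract a small interval on which the averaged cluster-counting ratio exceeds $1$ robustly in $c$. Suppose $(0,2]\setminus\mathcal{G}$ has positive Lebesgue measure. For each $t\notin\mathcal{G}$ there is, by definition, some $\epsilon(t)>0$ such that for \emph{every} $c>0$ one has
\begin{equation}
\limsup_{n\rightarrow\infty}\frac{\#\{j\in\N:E_j(t)\in\cup_{i=1}^n[\alpha_i^2-c,\alpha_i^2+c]\}}{n}\geq 1+\epsilon(t)^2.
\end{equation}
First I would pass to a fixed threshold: since $(0,2]\setminus\mathcal{G}=\bigcup_{m\in\N}\{t:\epsilon(t)^2>1/m\}$, at least one of these sets, call it $\mathcal{E}$, has positive measure, so that the limsup above is $\geq 1+2\epsilon_0$ for all $t\in\mathcal{E}$ and all $c>0$, with $\epsilon_0$ fixed.

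The next step is to replace "for all $c>0$" by a single $c$ on a set of comparable measure. The key monotonicity observation is that the counting function $\#\{j:E_j(t)\in\cup_{i=1}^n[\alpha_i^2-c,\alpha_i^2+c]\}$ is nondecreasing in $c$, hence the limsup ratio $\rho_c(t)$ defined by the left-hand side above is nondecreasing in $c$; therefore $\rho_c(t)\downarrow \rho_{0^+}(t)\geq 1+2\epsilon_0$ as $c\downarrow 0$ for every $t\in\mathcal{E}$. Applying the monotone convergence theorem (or Egorov's theorem) to the measurable functions $\mathbf{1}_\mathcal{E}\cdot\min(\rho_c,2)$ on the finite-measure space $(0,2]$, there is a single $c_0>0$ and a subset $\mathcal{E}'\subseteq\mathcal{E}$ with $m(\mathcal{E}')\geq\tfrac12 m(\mathcal{E})>0$ on which $\rho_{c_0}(t)\geq 1+\epsilon_0$; and since $\rho_c$ is increasing in $c$, in fact $\rho_c(t)\geq 1+\epsilon_0$ for all $0<c\leq c_0$ and all $t\in\mathcal{E}'$. (Here I also shrink $c_0$ if necessary so that $c_0<2/r_2^2$, to stay in the regime needed later; this costs nothing.) One subtlety to check is the measurability of $t\mapsto\rho_c(t)$: this follows because each eigenvalue branch $E_j(t)$ is piecewise smooth (hence measurable) in $t$, so each finite truncation $\#\{j\leq N:E_j(t)\in\cup_{i=1}^n[\alpha_i^2-c,\alpha_i^2+c]\}$ is measurable, and countable sup/limsup operations preserve measurability.

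Now I would produce the interval by a Lebesgue density argument. Since $m(\mathcal{E}')>0$, by the Lebesgue density theorem almost every point of $\mathcal{E}'$ is a density point; fix such a density point $t_*$. Then for every sufficiently small $\eta>0$ there is an interval $\mathcal{I}=[t_1,t_2]\ni t_*$ with $|\mathcal{I}|\leq\eta$ and $m(\mathcal{E}'\cap\mathcal{I})\geq(1-\tfrac{\epsilon_0}{4})|\mathcal{I}|$. On this interval, bounding the integrand below by $1+\epsilon_0$ on $\mathcal{E}'\cap\mathcal{I}$ and by $0$ elsewhere (the ratio is manifestly nonnegative), we get for every $0<c\leq c_0$
\begin{equation}
\frac{1}{|\mathcal{I}|}\int_\mathcal{I}\rho_c(t)\,dt\ \geq\ (1+\epsilon_0)\cdot\Big(1-\tfrac{\epsilon_0}{4}\Big)\ \geq\ 1+\tfrac{\epsilon_0}{2}.
\end{equation}
For $c>c_0$ the same bound holds because $\rho_c\geq\rho_{c_0}$ pointwise. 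Taking $\epsilon=\epsilon_0/2$ in the statement, and noting that $\eta$ (hence $|\mathcal{I}|$) was arbitrarily small, gives \eqref{play1} for all $c>0$ on an interval of arbitrarily small length, as claimed.

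The main obstacle I anticipate is not any single step but the bookkeeping around the quantifier "for all $c$": one must be careful that the monotonicity in $c$ is genuinely exploited so that a single choice $c_0$ controls all smaller $c$, and that the density-point interval can be taken independent of $c$. The monotonicity of the cluster count in $c$ is what makes this clean; without it one would be stuck trying to interchange a supremum over $c$ with an integral. A secondary technical point deserving a line of care is confirming measurability of $\rho_c$, which rests on the piecewise-smooth choice of eigenvalue branches $E_j(t)$ (available as in \cite{hassell}).
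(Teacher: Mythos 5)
Your argument is correct, but it reaches the interval by a genuinely different route than the paper. After the common first step (fixing a single threshold $\epsilon_0$ on a positive-measure bad set via a countable union, with measurability of $t\mapsto\rho_c(t)$ coming from the piecewise-smooth eigenvalue branches), the paper proceeds by outer regularity: it encloses the bad set $S$ in an open set $U$ of nearly the same measure, uses the pointwise lower bound $\geq 1$ for the integrand on $U\setminus S$ (which rests on the earlier quasimode machinery guaranteeing each high-energy cluster contains at least as many eigenvalues as quasi-eigenvalues), averages over $U$, picks one of the disjoint open intervals composing $U$, and then shrinks the length by subdividing that interval into equal pieces. You instead take a Lebesgue density point of the bad set and use only the trivial lower bound $0$ off the set, compensating with the fact that the bad set occupies proportion $\geq 1-\epsilon_0/4$ of the small interval; this is somewhat more elementary, since it avoids invoking the nontrivial pointwise bound $\geq 1$, and it delivers the arbitrarily-small-length clause automatically rather than via the subdivision step. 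Two cosmetic remarks: your Egorov/monotone-convergence step extracting a single $c_0$ is redundant, because the negation of the good-time condition already gives $\rho_c(t)\geq 1+2\epsilon_0$ for \emph{every} $c>0$ on $\mathcal{E}$ with the same $\epsilon_0$ (monotonicity in $c$ is still worth recording, as both you and the paper need it to pass from small $c$ to all $c>0$); and since \eqref{play1} demands a strict inequality, either note that $(1+\epsilon_0)(1-\epsilon_0/4)>1+\epsilon_0/2$ strictly for $0<\epsilon_0<1$ (harmless after normalising $\epsilon_0\leq 1$) or take $\epsilon$ slightly smaller than $\epsilon_0/2$.
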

\begin{proof}
By the monotone convergence property of measures, if $m((0,2]\setminus \mathcal{G})>0$ then there must exist $\epsilon>0$ and a positive measure set $S\subseteq (0,2]$ on which we have
\begin{equation}
\limsup_{n\rightarrow\infty}\left(\frac{\#\{j\in\N:E_j(t)\in \cup_{i=1}^n [\alpha_i^2-c,\alpha_i^2+c]\}}{n}\right)>1+2\epsilon
\end{equation}
for all $t\in S$ and for all $0<c<2/r_2^2$.
From the regularity of the Lebesgue measure, we can find an open set $S\subseteq U\subseteq (0,2]$ with $m(U)<m(S)+\delta$ for an arbitrarily small $\delta$.
We then have
\begin{equation}
\frac{1}{|S|}\int_S\limsup_{n\rightarrow\infty}\left(\frac{\#\{j\in\N:E_j(t)\in \cup_{i=1}^n [\alpha_i^2-c,\alpha_i^2+c]\}}{n}\right) \, dt > 1+2\epsilon
\end{equation}
and
\begin{equation}
\frac{1}{|U\setminus S|}\int_{U\setminus S}\limsup_{n\rightarrow\infty}\left(\frac{\#\{j\in\N:E_j(t)\in \cup_{i=1}^n [\alpha_i^2-c,\alpha_i^2+c]\}}{n}\right) \, dt \geq 1.
\end{equation}
from our pointwise bounds on the integrands.
By choosing $\delta$ sufficiently small, we are thus guaranteed the estimate
\begin{equation}
\frac{1}{|U|}\int_U \limsup_{n\rightarrow\infty}\left(\frac{\#\{j\in\N:E_j(t)\in \cup_{i=1}^n [\alpha_i^2-c,\alpha_i^2+c]\}}{n}\right) \, dt >1+\epsilon.
\end{equation}
Writing the open set $U$ as a countable union of disjoint open intervals, the average of the integrand over one such interval must exceed $1+\epsilon$, as claimed.

To complete the proof, we observe if we partition $\mathcal{I}$ into arbitrarily many intervals of equal length, at least one of them must also satisfy \eqref{play1}.
\end{proof}

To culminate the argument, we seek out a contradiction coming from the upper bound  \eqref{flowspeedeq} on speed of eigenvalue variation and the lower bound \eqref{play1} on the average proportion of eigenvalues lying in $c$-clusters.

\begin{prop}
For any $\epsilon>0$, there exists $c>0$ such that
\begin{equation}
\limsup_{m\rightarrow \infty} \frac{1}{m}\sum_{j=1}^m\frac{|\{t\in \mathcal{I}:E_j\in \cup_i [\alpha_i^2-c,\alpha_i^2+c]\}|}{|\mathcal{I}|}<d(t_1)+\epsilon
\end{equation}
for any sufficiently small interval $\mathcal{I}$.
\end{prop}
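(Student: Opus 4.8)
The plan is to bound, for each fixed large index $j$, the proportion of time $t\in\mathcal I$ that the eigenvalue branch $E_j(t)$ spends inside a $c$-cluster, and then average over $j$. The key geometric fact is that $c$-clusters are \emph{sparse} on the scale of the eigenvalue: by Proposition \ref{quasiasymptotic} and Weyl's law, the quasi-eigenvalues $\alpha_i^2$ have counting function asymptotic to $d(t)\cdot\frac{A(t)}{4\pi}\lambda^2(1+o(1))$ up to the $\epsilon$-loss as $\epsilon\to 0$, so a window $[E_j(t_2),E_j(t_1)]$ of length $\sim E_j(t_1)-E_j(t_2)\sim 4\pi j(A(t_1)^{-1}-A(t_2)^{-1})$ (from \eqref{weyl}) meets only about $d(t_1)$ times as many $c$-intervals, each of length $2c$, as there are quasi-eigenvalues in it; choosing $c$ small relative to the spacing makes the total measure of the union of $c$-clusters inside that window at most $(d(t_1)+\epsilon)$ times the window length, for $j$ large. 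So the first step is: fix $\epsilon>0$, shrink $\mathcal I=[t_1,t_2]$ so that $A(t_1)^{-1}-A(t_2)^{-1}$ is small and $d(t)$, $\dot A(t)/A(t)$ are nearly constant on $\mathcal I$, and then choose $c>0$ small enough (using Proposition \ref{quasiasymptotic} and the asymptotics of Bessel zeros already recorded) that the union $\bigcup_i[\alpha_i^2-c,\alpha_i^2+c]$ occupies at most a fraction $d(t_1)+\epsilon/2$ of any subinterval of $[0,E_j(t_1)]$ of length comparable to $E_j(t_1)-E_j(t_2)$, for all $j$ large.

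The second step converts this sparsity into a time bound via the speed estimate. By Corollary \ref{speedcor} (equivalently Proposition \ref{flowspeed} together with Proposition \ref{almostuniform}), for a full-density subsequence $(n_k)$ of indices and for some $\delta>0$, the total drop $-\int_S\dot E_{n_k}(t)\,dt$ over any measurable $S\subseteq\mathcal I$ with $m(S)>\delta$ is at most $E_{n_k}(t_1)\big(\frac{\dot A(t_1)}{A(t_1)(1-d(t_1))}+\epsilon\big)(t_2-t_1)$, while the Hadamard formula \eqref{classhad} guarantees $\dot E_j\le 0$, so each branch is monotone decreasing and sweeps the interval $[E_j(t_2),E_j(t_1)]$ bijectively (off a null set of $t$). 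The monotonicity lets us push forward Lebesgue measure on $\mathcal I$ to a measure on the energy window; the set of times for which $E_j(t)$ lies in a $c$-cluster maps to (a subset of) the intersection of the window with $\bigcup_i[\alpha_i^2-c,\alpha_i^2+c]$. Using the lower bound on eigenvalue speed from \eqref{flowspeedeq} to control the Jacobian $|\dot E_j|$ from below outside the small bad set $B_{n_k}$, and the crude bound $|\dot E_j|\le CE_j$ from Proposition \ref{almostuniform} on $B_{n_k}$ (whose measure tends to $0$), we conclude that the time spent by $E_{n_k}(t)$ inside $c$-clusters is at most $(d(t_1)+\epsilon)|\mathcal I|$ plus a contribution from $B_{n_k}$ that vanishes in the limit; hence $\frac{1}{|\mathcal I|}|\{t\in\mathcal I: E_{n_k}(t)\in\bigcup_i[\alpha_i^2-c,\alpha_i^2+c]\}|<d(t_1)+\epsilon$ for all large $k$.

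The third step is the Cesàro average over $j$. Since the inequality above holds for all $j$ in a full-density subsequence $(n_k)$, and since each individual term in $\frac1m\sum_{j=1}^m(\cdots)$ is trivially bounded by $1$, the indices not in $(n_k)$ contribute $o(1)$ to the average; thus $\limsup_{m\to\infty}\frac1m\sum_{j=1}^m\frac{|\{t\in\mathcal I:E_j(t)\in\bigcup_i[\alpha_i^2-c,\alpha_i^2+c]\}|}{|\mathcal I|}\le d(t_1)+\epsilon$, which is the claim. (One should be slightly careful that the speed bounds in Corollary \ref{speedcor} are uniform enough across $j$ in the subsequence; this is exactly what the "almost-uniform" formulation of Proposition \ref{flowspeed} and the removal of finitely many indices in Corollary \ref{speedcor} are designed to provide.)

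The main obstacle I expect is the \emph{uniform-in-$j$} bookkeeping in the second step: one needs the sparsity of $c$-clusters to hold on the relevant energy window $[E_j(t_2),E_j(t_1)]$ with constants independent of $j$, and simultaneously the speed bound \eqref{flowspeedeq} to hold outside a $j$-dependent bad set $B_{n_k}$ of measure $\to 0$, \emph{and} the crude bound on $B_{n_k}$ to not destroy the estimate. Matching the scale $c$ (fixed once $\epsilon,\mathcal I$ are chosen) against the eigenvalue spacing, which shrinks like $1/\lambda$, is the delicate point — it works because $c$-clusters can themselves agglomerate several quasi-eigenvalues, so what matters is the \emph{total} measure $\#\{\text{quasi-ev in window}\}\cdot 2c$ versus the window length, and this ratio is controlled by Proposition \ref{quasiasymptotic} uniformly once $c$ is small. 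Handling the push-forward/Jacobian argument on the (at most countably many) times where the branch $E_j(\cdot)$ fails to be smooth or strictly monotone requires only that these form a null set, which follows from piecewise smoothness of the branches as in \cite{hassell}.
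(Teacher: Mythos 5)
There is a genuine gap in your second step: you invoke \eqref{flowspeedeq} as a \emph{lower} bound on the speed $|\dot{E}_j|$, in order to control the Jacobian of $t\mapsto E_j(t)$ from below and so convert the small measure of $\bigcup_i[\alpha_i^2-c,\alpha_i^2+c]$ inside the energy window into a small amount of time. But \eqref{flowspeedeq}, like Proposition \ref{almostuniform}, bounds $\dot{E}_j$ from below, i.e.\ it is an \emph{upper} bound on the speed of decrease; no lower bound on $|\dot{E}_j|$ is available, and none can hold: branches whose eigenfunctions essentially avoid the stalk have $\dot{E}_j\approx 0$ and linger near quasi-eigenvalues, hence inside $c$-clusters, for long stretches of $t$. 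Note also that for small $c$ the clusters occupy only an $O(c)$ fraction of the window $[E_j(t_2),E_j(t_1)]$ (your first step is needlessly generous: the fraction tends to $0$ as $c\to 0$, it does not come out as $d(t_1)$), so if your push-forward argument were valid it would give $q_j=O(c)$ for a full-density set of branches — a conclusion that is false precisely because of the lingering behaviour the whole paper is built around. The mechanism is reversed, and it cannot be fixed by more careful bookkeeping.

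The paper runs the estimate the other way. By Weyl's law \eqref{weyl}, each branch must drop by roughly $4\pi X j$ with $X=A(t_1)^{-1}-A(t_2)^{-1}$ across $\mathcal{I}$; the part of this drop that can occur inside clusters is at most $2c$ times the number of quasi-eigenvalues in the window (bounded by the semidisk Weyl law, not by the asymptotic equality you assert for the quasi-eigenvalue counting function, of which Proposition \ref{quasiasymptotic} only proves a lower bound), hence negligible for small $c$; and while traversing the complement the branch moves at speed at most $E_j\bigl(\tfrac{\dot{A}(t_1)}{A(t_1)(1-d(t_1))}+\delta\bigr)$ by Corollary \ref{speedcor}. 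Dividing the remaining drop by this maximal speed bounds the time spent \emph{outside} clusters from below by approximately $(1-d(t_1))\tfrac{A(t_1)}{A(t_2)}|\mathcal{I}|$, whence $q_j<d(t_1)+\epsilon$ for small $|\mathcal{I}|$: the $d(t_1)$ in the conclusion comes from the $(1-d(t_1))^{-1}$ factor in the speed bound, not from sparsity of the clusters as in your first step. Your final Ces\`aro/density step, and the use of the crude bound of Proposition \ref{almostuniform} to absorb the bad sets $B_{n_k}$, are consistent with the paper.
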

\begin{proof}
Note that we have the flow speed bound $\eqref{flowspeedeq}$ for a full density subsequence of eigenvalues, so if we can establish the claimed inequality for each summand with a sufficiently large index that obeys the flow speed bound, density will allow us to draw the desired conclusion.

We now suppose $E_j$ is a large eigenvalue that lies in this full density subsequence.

Writing $X=(A(t_1)^{-1}-A(t_2)^{-1})$ for brevity, Weyl's law applied to the mushroom gives
\begin{equation}E_j(t_1)-E_j(t_2)> (4\pi X -2\delta)j \end{equation}
for $\delta>0$ and all sufficiently large $j$.

Weyl's law for the semidisk (recalling that we constructed the completely integrable region quasimodes from semidisk eigenfunctions) gives us an upper bound of
\begin{equation}
\left(\frac{\pi r_2^2X}{2}+\delta\right) j
\end{equation}
for the number of quasi-eigenvalues in $[E_j(t_2),E_j(t_1)]$
and hence an upper bound of
\begin{equation}
2c\left(\frac{\pi r_2^2X}{2}+\delta\right) j
\end{equation}
for the length of $[E_j(t_2),E_j(t_1)]$ that lies within $\cup_i [\alpha_i^2-c,\alpha_i^2+c]$.

Now suppose that $E_j(t)$ spends proportion $q_j$ of $t\in\mathcal{I}$ in $\cup_i [\alpha_i^2-c,\alpha_i^2+c]$. 

From Proposition \ref{almostuniform}, it follows that the $q_j$ are uniformly bounded above by some $1-\delta$.

This means that we apply Corollary \ref{speedcor} to find a lower bound for the time taken by an eigenvalue $E_j$ in our full-density subsequence to traverse the set \\ \noindent${[E_j(t_2),E_j(t_1)]\setminus \cup_i [\alpha_i^2-c,\alpha_i^2+c]}$. Heuristically, we can think of this as dividing the size of this set by an upper bound for the speed of the eigenvalue's variation.

%Then, dividing the distance $E_j$ must travel that lies outside $\cup_i [\alpha_i^2-c,\alpha_i^2+c]$ by an upper bound for $\dot{E_j}(t)$ coming from \eqref{flowspeedeq} must yield a quantity smaller than the measure of $t\in\mathcal{I}$ such that $E_j(t)\notin \cup_i [\alpha_i^2-c,\alpha_i^2+c]$.

Precisely, we have
\begin{eqnarray}
(1-q_j)(t_2-t_1)&>& \frac{jX(4\pi-\pi r_2^2 c)-j\delta(1+2c) }{E_j(t_1)(\frac{\dot{A}(t_1)}{A(t_1)(1-d(t_1))}+\delta)}\\
&=&\frac{j}{E_j(t_1)}\cdot\left(\frac{X(4\pi-\pi r_2^2 c)-\delta(1+2c) }{\frac{\dot{A}(t_1)}{A(t_1)(1-d(t_1))}+\delta}\right)\\
&>& \left(\frac{4\pi}{A(t_1)}+\delta\right)^{-1}\cdot\left(\frac{X(4\pi-\pi r_2^2 c)-\delta(1+2c) }{\frac{\dot{A}(t_1)}{A(t_1)(1-d(t_1))}+\delta}\right)\\
&>& \frac{XA(t_1)^2(1-d(t_1))}{\dot{A}(t_1)}-\frac{\epsilon}{2}
\end{eqnarray}
where the final two lines follow from Weyl's law and passing to sufficiently small $\delta$ and $c$ respectively.

Additionally, since $A(t)$ is a linear polynomial in $t$, we have 
\begin{equation}
X=\frac{A(t_2)-A(t_1)}{A(t_1)A(t_2)}=\frac{(t_2-t_1)\dot{A}(t_1)}{A(t_1)A(t_2)}
\end{equation}
which implies that
\begin{eqnarray*}
1-q_j &>& \frac{A(t_1)}{A(t_2)}(1-d(t_1))-\frac{\epsilon}{2}\\
q_j &<& d(t_1)+(d(t_1)-1)\left(\frac{A(t_1)}{A(t_2)}-1\right)+\frac{\epsilon}{2}\\
&<& d(t_1)+\epsilon
\end{eqnarray*}
for sufficiently small $|\mathcal{I}|$, using the uniform continuity of $A$.

Thus we have the required inequality for all sufficiently small intervals $\mathcal{I}$ and all sufficiently large $j$ in a full density subsequence on which \eqref{flowspeedeq} holds.

\end{proof}

\begin{prop}
\label{propo2}
For any $\epsilon>0$ there exists $c>0$, such that
\begin{equation}
\frac{1}{|\mathcal{I}|}\int_\mathcal{I}\limsup_{n\rightarrow\infty}\left(\frac{\#\{j\in\N:E_j(t)\in \cup_{i=1}^n [\alpha_i^2-c,\alpha_i^2+c]\}}{n}\right)\, dt<1+\epsilon
\end{equation}
for all sufficiently small $|\mathcal{I}|$.
\end{prop}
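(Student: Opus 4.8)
The plan is to derive Proposition \ref{propo2} from the previous proposition by a Fubini-type interchange of the time average with the limsup in $n$. The previous proposition controls, for each fixed eigenvalue index $j$ in a full-density subsequence, the average over $t\in\mathcal I$ of the indicator that $E_j(t)$ lies in a $c$-cluster; summing this over $j\leq m$ and dividing by $m$ yields a quantity bounded by $d(t_1)+\epsilon$ in the limsup. What Proposition \ref{propo2} asks for is instead the time average of $\limsup_{n}\tfrac1n\#\{j:E_j(t)\in\bigcup_{i\le n}[\alpha_i^2-c,\alpha_i^2+c]\}$. The key observation is that, because all but finitely many $c$-clusters contain at least as many eigenvalues as quasi-eigenvalues (which follows once \eqref{goodtime}-type counting is available; here one instead uses Weyl's law on the semidisk versus the mushroom to compare densities), the counting function $\#\{j:E_j(t)\in\bigcup_{i\le n}[\cdots]\}$ grows comparably to $n$, so the ratio appearing in Proposition \ref{propo2} is, up to $o(1)$ as $n\to\infty$, the same as $\tfrac{1}{n}\sum_{j\le m(n,t)}1_{\{E_j(t)\in c\text{-cluster}\}}$ with $m(n,t)$ the number of eigenvalues in the first $n$ clusters. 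Thus the two quantities agree asymptotically and it suffices to transfer the per-$j$ bound of the previous proposition into a statement about the time average of the eigenvalue ratio.

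Concretely, first I would fix $\epsilon>0$ and invoke the previous proposition to obtain $c>0$ such that
\begin{equation}
\limsup_{m\to\infty}\frac1m\sum_{j=1}^m \frac{|\{t\in\mathcal I:E_j(t)\in\bigcup_i[\alpha_i^2-c,\alpha_i^2+c]\}|}{|\mathcal I|}<d(t_1)+\epsilon/2
\end{equation}
for all sufficiently small $\mathcal I$, where the sum runs over the full-density subsequence on which \eqref{flowspeedeq} holds (the complementary indices contribute $o(m)$ and are discarded). Next I would relate the index $n$ (number of clusters) to the eigenvalue count: by Weyl's law applied to $M_t$ and Proposition \ref{quasiasymptotic}/Weyl's law on the semidisk, for each $t$ there is a constant $\kappa(t)$, uniformly bounded above and below on $\mathcal I$, such that the $n$-th $c$-cluster's eigenvalue-count partial sum $\sum_{k\le n}N_{\mathrm{mushroom}}(\mathcal C_k)$ is asymptotic to $\kappa(t)n$; hence the ratio in Proposition \ref{propo2} is within $o(1)$ of $\tfrac{1}{\kappa(t)n}\sum_{j:E_j(t)\in\cup_{i\le n}[\cdots]}1$. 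Then I would apply Fatou's lemma / reverse Fatou (the integrands are uniformly bounded by Proposition \ref{almostuniform}'s control, which forces at most finitely many clusters to misbehave, so dominated convergence applies) to move $\int_{\mathcal I}$ inside the limsup, obtaining
\begin{equation}
\frac{1}{|\mathcal I|}\int_{\mathcal I}\limsup_{n}\left(\frac{\#\{j:E_j(t)\in\cup_{i\le n}[\cdots]\}}{n}\right)dt\le \limsup_{n}\frac{1}{|\mathcal I|}\int_{\mathcal I}\frac{\#\{j:E_j(t)\in\cup_{i\le n}[\cdots]\}}{n}\,dt,
\end{equation}
and the right-hand side is a time-averaged eigenvalue count which, after the identification with $\tfrac{1}{n}\sum_j 1_{\{\cdots\}}$, is controlled by the per-$j$ bound above plus the crude trivial bound $N_{\mathrm{mushroom}}/N_{\mathrm{semidisk}}\le C$ on the finitely many exceptional clusters.

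The main obstacle I anticipate is the interchange of the time integral with the limsup over $n$: unlike Fatou's lemma for liminf, bounding $\int\limsup\le\limsup\int$ requires either a uniform integrable dominating function or uniform integrability of the family. Here the relevant domination comes from Proposition \ref{almostuniform}, which gives $\dot E_j(t)\ge -CE_j(t)$ uniformly in $t$, hence $E_j(t)\le e^{C(2-t)}E_j(2)$ and (by Weyl's law) the number of eigenvalues below any threshold is comparable across $t\in(0,2]$ with a time-independent constant; this makes the normalized counting ratio uniformly bounded in both $n$ and $t$, so bounded convergence legitimizes the interchange. The only remaining care is to ensure the passage from "average of indicators over the full-density subsequence of $j$" to "normalized count over all $j$ in the first $n$ clusters" absorbs the $o(1)$ contribution of the non-subsequence indices and the finitely many bad clusters into $\epsilon/2$, which is routine given Weyl's law.
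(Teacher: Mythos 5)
Your overall plan (interchange the $t$-average with the $\limsup$ in $n$ by a bounded-convergence argument, then control the resulting time-averaged count through the per-eigenvalue occupation fractions $q_j$ of the preceding proposition) is the same skeleton as the paper's proof, and the interchange step itself is fine: every eigenvalue counted lies below $\alpha_n^2+c$, so Weyl's law together with \eqref{quasiasymptoticeqn} bounds the integrand by a constant uniformly in $t$ and $n$. However, the step in which you ``relate $n$ to the eigenvalue count'' is where the argument breaks. You assert that $\sum_{k\le n}N_{\mathrm{mushroom}}(\mathcal{C}_k)$ is \emph{asymptotic} to $\kappa(t)n$ by Weyl's law on $M_t$ and on the semidisk. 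Weyl's law counts all eigenvalues below a threshold; it says nothing about how many of them fall inside the sparse union of windows $\cup_{i\le n}[\alpha_i^2-c,\alpha_i^2+c]$, and whether that in-window count is $\sim n$ or closer to the trivial bound $\sim n/d(t)$ is exactly the dichotomy that the definition of $\mathcal{G}$ and Propositions \ref{propo} and \ref{propo2} are designed to adjudicate. If such an asymptotic with a Weyl-computable $\kappa(t)$ were available, the whole eigenvalue-flow argument would be unnecessary; so this step is both unjustified and dangerously close to assuming the conclusion. (The inequality $N_{\mathrm{mushroom}}(\mathcal{C})\ge N_{\mathrm{semidisk}}(\mathcal{C})$ for all but finitely many clusters does follow from Proposition \ref{spectral}, but it is a lower bound, which is the wrong direction here.)

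Second, your closing claim that the swapped quantity ``is controlled by the per-$j$ bound plus a crude bound on finitely many bad clusters'' hides precisely the cancellation that produces the constant $1+\epsilon$. After Fubini one estimates $\tfrac1n\sum_j q_j$ over the set of indices that can contribute; since there are roughly $1/d(t)$ times as many such eigenvalues as quasi-eigenvalues, applying $q_j<d(t_1)+\epsilon$ termwise gives about $(d+\epsilon)/d$, and one must actually exhibit this cancellation of $d$ against $1/d$. The paper does this by (i) using monotonicity of the Dirichlet eigenvalues in $t$ (the domain expands, so $E_j(t)\ge E_j(t_2)$ on $\mathcal{I}$) to replace the $t$-dependent set of contributing indices by the fixed set $\{j:E_j(t_2)<\alpha_n^2+c\}$ --- a point your write-up never addresses; (ii) bounding its cardinality by $(\alpha_n^2+c)\bigl(A(t_2)/4\pi+\delta\bigr)$ via Weyl's law; (iii) inverting \eqref{quasiasymptoticeqn} to get $\frac{\alpha_n^2}{n}\cdot\frac{A(t)d(t)}{4\pi}\le 1+o(1)$; and (iv) using continuity of $A$ and $d$ over a short interval so that the factor $d(t_1)+\delta$ from the preceding proposition cancels the factor of order $1/d(t_2)$ coming from (ii)--(iii). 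Without (ii)--(iv) your argument yields at best a bound of the form $C\,(d(t_1)+\epsilon)$ with an uncontrolled constant $C$, not $1+\epsilon$, so the proposal as written does not establish the proposition.
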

\begin{proof}
By the dominated convergence theorem, it suffices to show that we can find $c$ such that
\begin{equation}
\frac{1}{|\mathcal{I}|}\int_\mathcal{I} \frac{\#\{j:E_j(t)\in \cup_{i=1}^n[\alpha_i^2-c,\alpha_i^2+c]\}}{n}\, dt < 1+\frac{\epsilon}{2}
\end{equation}
for sufficiently large $n$.
This quantity is bounded above by
\begin{equation}
\frac{1}{n}\sum_{j:E_j(t_2)<\alpha_n^2+c} \frac{|\{t\in \mathcal{I}:E_j\in \cup_i [\alpha_i^2-c,\alpha_i^2+c]\}|}{|\mathcal{I}|}=\frac{1}{n}\sum_{j:E_j(t_2)<\alpha_n^2+c} q_j.
\end{equation}
The sum is controlled by the previous proposition, giving us an upper bound of
\begin{equation}
\frac{1}{n}\cdot \max \{j:E_j(t_2)<\alpha_n^2+c\}(d(t_1)+\delta)
\end{equation}
for sufficiently large $n$.

From Weyl's law, we have
\begin{equation}
\max \{j:E_j(t_2)<\alpha_n^2+c\}<(\alpha_n^2+c)\left( \frac{A(t_2)}{4\pi}+\delta\right)
\end{equation}
for sufficiently large n.

By taking $\delta$, $c$, and $\mathcal{I}$ sufficiently small, we then obtain 
\begin{equation}
\frac{1}{n}\sum_{j:E_j(t_2)<\alpha_n^2+c} q_j < \left(\frac{\alpha_n^2}{n}\cdot\frac{A(t_2)d(t_2)}{4\pi}\right)+\frac{\epsilon}{4}.
\end{equation}

Inverting the estimate \eqref{quasiasymptoticeqn} provides an upper bound of $1+\frac{\epsilon}{4}$ for the first summand on the right-hand side for all sufficiently large $n$, thus completing the proof.
\end{proof}

\begin{cor}
The set $\mathcal{G}$ has full measure in $(0,2]$.
\end{cor}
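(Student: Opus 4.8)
The plan is to obtain the corollary by contradiction, playing Proposition \ref{propo} against Proposition \ref{propo2}; these two statements have been engineered so that they cannot both hold once $(0,2]\setminus\mathcal{G}$ is assumed non-null.

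First I would suppose, for contradiction, that $m((0,2]\setminus\mathcal{G})>0$. Then Proposition \ref{propo} applies and produces a fixed $\epsilon>0$ together with, for any prescribed length bound, an interval $\mathcal{I}=[t_1,t_2]\subset(0,2]$ of at most that length on which
\[
\frac{1}{|\mathcal{I}|}\int_{\mathcal{I}}\limsup_{n\to\infty}\left(\frac{\#\{j\in\N:E_j(t)\in\cup_{i=1}^n[\alpha_i^2-c,\alpha_i^2+c]\}}{n}\right)dt>1+\epsilon
\]
holds simultaneously for all $c>0$. Next I would feed this same $\epsilon$ into Proposition \ref{propo2}, which yields a single $c=c(\epsilon)>0$ such that the identical average is strictly less than $1+\epsilon$ for every sufficiently small interval. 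Choosing $\mathcal{I}$ short enough to lie in the regime covered by Proposition \ref{propo2} for this $c$ — and Proposition \ref{propo} guarantees such a short $\mathcal{I}$ exists, with its lower bound valid in particular at $c=c(\epsilon)$ — gives an interval on which the average is both strictly greater than and strictly less than $1+\epsilon$. This contradiction forces $(0,2]\setminus\mathcal{G}$ to be Lebesgue-null, so $\mathcal{G}$ has full measure in $(0,2]$.

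The one point requiring care — essentially the only obstacle, and a purely logical one — is the order of the quantifiers. Proposition \ref{propo} hands us an $\epsilon$ together with arbitrarily short intervals on which the lower bound holds for \emph{every} $c$, whereas Proposition \ref{propo2} hands us, for each prescribed error, a threshold window width $c$ that succeeds only after the interval is made small enough. The argument closes precisely because $\epsilon$ is produced by Proposition \ref{propo} \emph{before} $c$ is chosen: having fixed $\epsilon$, we extract $c=c(\epsilon)$ from Proposition \ref{propo2}, and only then return to Proposition \ref{propo} to select an admissibly short $\mathcal{I}$ witnessing the "for all $c$" lower bound at that particular value of $c$. No additional estimates beyond the two cited propositions are needed.
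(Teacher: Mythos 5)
Your argument is correct and is exactly the route the paper takes: its proof of the corollary is simply that it follows immediately from Propositions \ref{propo} and \ref{propo2}, i.e.\ the same contradiction between the $1+\epsilon$ lower bound (valid for all $c$, on arbitrarily short intervals) and the $1+\epsilon$ upper bound (for a $c$ chosen after $\epsilon$, on all sufficiently short intervals). Your explicit handling of the quantifier order — fixing $\epsilon$ from Proposition \ref{propo}, then $c$ from Proposition \ref{propo2}, then the short interval — is precisely the intended reading.
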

\begin{proof}
This is an immediate consequence of Propositions 5.5 and 5.7.
\end{proof}

\appendix
\section{}
In this appendix, we prove the following abstract lemma that we have used several times to assemble full density subsequences along which a given function has limit $0$.

\begin{lem}
\label{densitylemma}
If there exists a function $g:\mathbb{N}\rightarrow \R$ and a family of subsets $S_j\subset \mathbb{N}$ such that
\begin{equation}
\liminf_{n\rightarrow\infty} \frac{\#\{k\leq n:k\in S_j\}}{n}>d-\epsilon_j
\end{equation}
and 
\begin{equation}
\limsup_{n\in S_j\rightarrow\infty} g(n)<\epsilon_j'
\end{equation}
where $\epsilon_j,\epsilon_j'\searrow 0$, then there exists a subset $S\subset \mathbb{N}$ such that
\begin{equation}
\liminf_{n\rightarrow\infty} \frac{\#\{k\leq n:k\in A\}}{n}\geq d
\end{equation}
and
\begin{equation}
\label{gconv}
\lim_{n\in S \rightarrow\infty} g(n)=0.
\end{equation}
\end{lem}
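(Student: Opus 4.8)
The plan is to use a standard diagonal argument, extracting from the sequence of sets $(S_j)$ a single set $S$ by patching together longer and longer initial segments of successive $S_j$'s. First I would fix, for each $j$, a threshold $N_j$ large enough that two things hold simultaneously for all $n\geq N_j$: the density lower bound $\#\{k\leq n: k\in S_j\}/n > d-\epsilon_j$ (which is available for all large $n$ by the $\liminf$ hypothesis), and the smallness bound $g(n)<\epsilon_j'$ for all $n\in S_j$ with $n\geq N_j$ (available from the $\limsup$ hypothesis restricted to $S_j$). Without loss of generality I would take the $N_j$ strictly increasing. Then I would define
\begin{equation}
S = \bigcup_{j=1}^\infty \big( S_j \cap [N_j, N_{j+1}) \big),
\end{equation}
so that on the block $[N_j,N_{j+1})$ the set $S$ agrees with $S_j$.

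Next I would verify the two conclusions. For the convergence \eqref{gconv}: any sufficiently large $n\in S$ lies in some block $[N_j,N_{j+1})$ with $j$ large, hence $n\in S_j$ and $n\geq N_j$, so $g(n)<\epsilon_j'$; since $\epsilon_j'\searrow 0$ and $j\to\infty$ as $n\to\infty$, we get $g(n)\to 0$ along $S$. For the density bound, fix $n$ large and let $j$ be the index with $n\in[N_j,N_{j+1})$. I would bound $\#\{k\leq n: k\in S\}$ from below by $\#\{k\in[N_j,n]: k\in S_j\}$, which in turn is at least $\#\{k\leq n: k\in S_j\} - N_j$. Dividing by $n$ gives a lower bound of $(d-\epsilon_j) - N_j/n$. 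The subtlety here is that this is only useful when $n$ is large \emph{compared to $N_j$}, not merely when $n\geq N_j$; if $n$ is only just past $N_j$ the term $N_j/n$ need not be small.

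This is the one genuine obstacle, and the fix is to choose the thresholds $N_j$ with extra room: in addition to the requirements above, I would demand $N_{j+1} \geq j\, N_j$ (or any schedule forcing $N_j/N_{j+1}\to 0$). Then for $n\in[N_j,N_{j+1})$ we have $N_j/n \leq N_j/N_j = 1$ in the worst case — which is still not enough — so instead I would split the block: for $n\in[N_j^2, N_{j+1})$ (after arranging $N_{j+1}\geq N_j^2$) we get $N_j/n \leq 1/N_j \to 0$ and the estimate $\#\{k\leq n:k\in S\}/n > d - \epsilon_j - 1/N_j$ holds; for the finitely-many-blocks-worth of $n$ in the short initial stretch $[N_j, N_j^2)$ one simply notes these contribute nothing to the $\liminf$ as $n\to\infty$. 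Taking $\liminf_{n\to\infty}$ then yields $\#\{k\leq n: k\in S\}/n \geq d$ since $\epsilon_j + 1/N_j \to 0$, completing the proof. The whole argument is elementary; the only thing to be careful about is the interplay between where the density estimate for $S_j$ kicks in and how far into its block we are, which the growth condition on $(N_j)$ resolves.
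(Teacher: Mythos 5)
Your verification of the density bound has a genuine gap, and the patch you propose does not close it. Writing $d_n(A)=\#\{k\leq n:k\in A\}/n$, the stretches $[N_j,N_j^2)$ cannot be discarded: one such stretch occurs in every block, hence for arbitrarily large $n$, so they do enter $\liminf_{n\to\infty}d_n(S)$. Nor can they be rescued by reinstating the contribution of the earlier blocks that your estimate throws away. The hypothesis only controls the cumulative density of $S_j$ counted from $1$, so all of the mass that $S_j$ needs in order to satisfy $\#\{k\leq n:k\in S_j\}>(d-\epsilon_j)n$ may sit \emph{below} the switch point: for instance $S_j\supseteq[1,N_j]$ with $S_j\cap\bigl(N_j,\,N_j/(d-\epsilon_j)\bigr)=\emptyset$ is consistent with the hypotheses, no matter how large you choose $N_j$, and the empty stretch has length comparable to $N_j$ itself, so no growth condition on the $N_j$ helps. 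If in addition the earlier sets $S_i$, $i<j$, have density only about $d-\epsilon_i$, then at $n\approx N_j/(d-\epsilon_j)$ your glued set satisfies
\[
d_n(S)\ \approx\ \frac{(d-\epsilon_{j-1})\,N_j}{N_j/(d-\epsilon_j)}\ \approx\ d^2 ,
\]
and since this recurs in every block, $\liminf_n d_n(S)\leq d^2<d$ whenever $0<d<1$. (For $d=1$ the gluing can be repaired by keeping the previous block's contribution, but the lemma is used in the paper with $d=d(t)<1$, e.g.\ to extract the sequence $B_t$ of density $d(t)$ in Theorem \ref{mainthm}, so the general case is the one that matters.)

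The paper's proof avoids this by removing nested exceptional sets instead of pasting together pieces of the $S_j$. It puts $B_j=\{k:g(k)\geq 2\epsilon_j'\}$; these are nested increasing because $\epsilon_j'\searrow 0$, and each lies in the complement of a cofinite subset of $S_j$, hence has upper density at most $1-d+2\epsilon_j$. Choosing $N_j$ with $d_n(B_j)<1-d+2\epsilon_j$ for all $n\geq N_j$, one sets $B=\bigcup_j\bigl(B_j\cap[N_j,\infty)\bigr)$ and $S=\mathbb{N}\setminus B$. The nestedness gives $B\cap[1,n]\subseteq B_j$ for every $n\in[N_j,N_{j+1})$ — a bound over the \emph{entire} initial segment $[1,n]$, not merely over the current block — so $d_n(B)<1-d+2\epsilon_j$, hence $\liminf_n d_n(S)\geq d$, while $n\in S$, $n\geq N_j$ forces $g(n)<2\epsilon_j'$. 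This uniform control over $[1,n]$ is exactly what the block-gluing construction lacks; to fix your argument you should switch to this complement formulation rather than tune the schedule of the $N_j$.
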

\begin{proof}
For ease of notation, we define
\begin{equation}
d_n(A)=\frac{\#\{k\leq n:k\in A\}}{n}
\end{equation}
for $A\subseteq\mathbb{N}$ and $n\in\mathbb{N}$.

We have $g(n)< 2\epsilon_j'$ for cofinitely many elements of $S_j$, and we denote these sets by $S_j'$.
Now let
\begin{equation}
B_j=\{k\in\mathbb{N}:g(k)\geq 2\epsilon_j'\}\subseteq \mathbb{N}\setminus S_j'.
\end{equation}
Since each $d_n$ respects the partial ordering of set inclusion and is additive with respect to disjoint unions, we can construct a strictly increasing sequence $(N_j)_{j\in\mathbb{N}}$ such that $N_1=1$ and $d_n(B_j)< 1-d+2\epsilon_j$ for all $n\geq N_j$.

We define
\begin{equation}
B=\bigcup_{j\in \mathbb{N}}B_j\cap [N_j,\infty).
\end{equation}

If $n\in [N_j,N_{j+1})$, then any $k\in [1,n]\cap B$ must lie in $B_i$ for some $i\leq j$ and hence in $B_j$.

This implies that for $n\in [N_j,N_{j+1})$ we have $d_n(B)\leq d_n(B_j) <1-d+2\epsilon_j$ and consequently, that $\displaystyle \limsup_{n\rightarrow\infty} d_n(B)\leq 1-d$.

We now take $S:=\mathbb{N}\setminus B$, with the required density bound 

\begin{equation}
\liminf_{n\rightarrow\infty }d_n(S)\geq d.
\end{equation}

To complete the proof we observe that if $n\in [N_j,\infty) \cap S$, then $n\in \mathbb{N}\setminus B_i$ for each $i\leq j$, and hence $g(n)<2\epsilon_j'$. This establishes \eqref{gconv}.
\end{proof}

\bibliographystyle{plain}
\bibliography{mybib}

\def\cprime{$'$}
\begin{thebibliography}{10}

\bibitem{absteg}
Milton Abramowitz and Irene~A. Stegun.
\newblock {\em Handbook of mathematical functions with formulas, graphs, and
  mathematical tables}, volume~55 of {\em National Bureau of Standards Applied
  Mathematics Series}.
\newblock For sale by the Superintendent of Documents, U.S. Government Printing
  Office, Washington, D.C., 1964.

\bibitem{barnett}
Alex~H. Barnett and Timo Betcke.
\newblock Quantum mushroom billiards.
\newblock {\em Chaos}, 17(4):043125, 13, 2007.

\bibitem{bunimovichstadium}
L.~A. Bunimovich.
\newblock On the ergodic properties of nowhere dispersing billiards.
\newblock {\em Comm. Math. Phys.}, 65(3):295--312, 1979.

\bibitem{bunimovich}
Leonid~A. Bunimovich.
\newblock Mushrooms and other billiards with divided phase space.
\newblock {\em Chaos}, 11(4):802--808, 2001.

\bibitem{verdiere}
Y.~Colin~de Verdi{\`e}re.
\newblock Ergodicit\'e et fonctions propres du laplacien.
\newblock {\em Comm. Math. Phys.}, 102(3):497--502, 1985.

\bibitem{cfs}
I.~P. Cornfeld, S.~V. Fomin, and Ya.~G. Sina{\u\i}.
\newblock {\em Ergodic theory}, volume 245 of {\em Grundlehren der
  Mathematischen Wissenschaften [Fundamental Principles of Mathematical
  Sciences]}.
\newblock Springer-Verlag, New York, 1982.
\newblock Translated from the Russian by A. B. Sosinski{\u\i}.

\bibitem{galkowski}
Jeffrey Galkowski.
\newblock Quantum ergodicity for a class of mixed systems.
\newblock {\em J. Spectr. Theory}, 4(1):65--85, 2014.

\bibitem{gerard-leichtnam}
Patrick G{\'e}rard and {\'E}ric Leichtnam.
\newblock Ergodic properties of eigenfunctions for the {D}irichlet problem.
\newblock {\em Duke Math. J.}, 71(2):559--607, 1993.

\bibitem{restrict}
Colin Guillarmou, Andrew Hassell, and Adam Sikora.
\newblock Restriction and spectral multiplier theorems on asymptotically conic
  manifolds.
\newblock {\em Anal. PDE}, 6(4):893--950, 2013.

\bibitem{hassell}
Andrew Hassell.
\newblock Ergodic billiards that are not quantum unique ergodic.
\newblock {\em Ann. of Math. (2)}, 171(1):605--619, 2010.
\newblock With an appendix by the author and Luc Hillairet.

\bibitem{hopf}
Eberhard Hopf.
\newblock Ergodic theory and the geodesic flow on surfaces of constant negative
  curvature.
\newblock {\em Bull. Amer. Math. Soc.}, 77:863--877, 1971.

\bibitem{percival}
Ian~C. Percival.
\newblock Regular and irregular spectra of molecules.
\newblock In {\em Stochastic behavior in classical and quantum {H}amiltonian
  systems ({V}olta {M}emorial {C}onf., {C}omo, 1977)}, volume~93 of {\em
  Lecture Notes in Phys.}, pages 259--282. Springer, Berlin-New York, 1979.

\bibitem{schnirelman}
A.~I. {\v{S}}nirel{\cprime}man.
\newblock Ergodic properties of eigenfunctions.
\newblock {\em Uspehi Mat. Nauk}, 29(6(180)):181--182, 1974.

\bibitem{sogge}
Christopher~D. Sogge.
\newblock {\em Fourier integrals in classical analysis}, volume 105 of {\em
  Cambridge Tracts in Mathematics}.
\newblock Cambridge University Press, Cambridge, 1993.

\bibitem{zelditch}
Steven Zelditch.
\newblock Uniform distribution of eigenfunctions on compact hyperbolic
  surfaces.
\newblock {\em Duke Math. J.}, 55(4):919--941, 1987.

\bibitem{zelditch-zworski}
Steven Zelditch and Maciej Zworski.
\newblock Ergodicity of eigenfunctions for ergodic billiards.
\newblock {\em Comm. Math. Phys.}, 175(3):673--682, 1996.

\bibitem{zworski}
Maciej Zworski.
\newblock {\em Semiclassical analysis}, volume 138 of {\em Graduate Studies in
  Mathematics}.
\newblock American Mathematical Society, Providence, RI, 2012.

\end{thebibliography}

\end{document}